\newtheorem{thm}{Theorem}[section]
\newtheorem{cor}[thm]{Corollary}
\newtheorem{prop}[thm]{Proposition}
\newtheorem{lem}[thm]{Lemma}
\theoremstyle{definition}
\newtheorem{defn}[thm]{Definition}
\newtheorem{exmp}[thm]{Example}
\theoremstyle{remark}
\newtheorem{rem}[thm]{Remark}
\newcommand{\Cont}{\operatorname{Cont}}
\newcommand{\Sz}{\operatorname{Sz}}
\let\c@equation\c@thm
\numberwithin{equation}{section}
\title[Families of Toeplitz operators]{Families of Toeplitz operators, family index and deformation quantization}
\author{ Clément Cren, Erfan Rezaei}
\pgfplotsset{compat=1.18}
\begin{document}

\newcommand{\dpt}{d_{\Tilde{\Phi}}}
\newcommand{\Ch}{\(\check{\text{C}}\)}
\newcommand{\tPhi}{\Tilde{\Phi}}
\newcommand{\ls}{\langle }
\newcommand{\rs}{\rangle}
\newcommand{\KK}{\mathrm{KK}}
\newcommand{\K}{\mathrm{K}}
\newcommand{\Op}{\mathrm{Op}}
\newcommand{\ev}{\mathrm{ev}}

\begin{abstract}
Given a contact fibration, we construct smooth families of Szegö projections on the fibers. This allows us to define smooth families of Toeplitz operators. We apply these operators to construct a deformation quantization of prequantizable symplectic fibrations, recovering a result of Kravchenko in an analytic way. We also derive a family index for these families of Toeplitz operators. To this end, we generalize an index formula of Baum and van Erp to families.
\end{abstract}

\maketitle

\tableofcontents
\newpage

\section{Introduction}
Classically, Toeplitz operators are given by compressions of multiplication operators  \(T_f:=S\mathcal{M}_fS:L^2(\mathbb{S}^1)\to L^2(\mathbb{S}^1) \), where the symbol \(f\) is a continuous function on \(\mathbb{S}^1\) and the operator \(S\) denotes the orthogonal projection onto the Hardy space \(H^2(\mathbb{S}^1)\). 
Boutet de Monvel generalized the notion of Szegö projection to strictly pseudoconvex CR manifolds in \cite{BoutetdeMonvelIndex} and to contact manifolds in \cite{BoutetdeMonvelGuillemin1981} with Guillemin.
These operators, previously defined using Fourier integral operators of Hermite type (or with complex phases \textit{à la} Melin-Sjöstrand \cite{MelinSjostrand1974}), were redefined by Epstein and Melrose using the Heisenberg calculus \cite{EpsteinMelrose1998,EpsteinMelroseUnpublished}, we will use this approach in the present paper.

Boutet de Monvel derived an index theorem for these generalized Toeplitz operators in \cite{BoutetdeMonvelIndex}, which can be formulated as follows:

\begin{thm}[Boutet de Monvel \cite{BoutetdeMonvelIndex}] \label{Theorem :Boutet de Monvel Theorem for contact manifolds}Let \(E\to M\) be a complex vector bundle over a closed contact manifold and \(f\in C^\infty(M,\mathrm{hom}(E))\), which is everywhere invertible. Let \(S\) denote a Szegö projection, acting on sections of \(E\). The Toeplitz operator \[T_f=S\mathcal{M}_fS\] is Fredholm with index \[\mathrm{ind}(T_f)=\mathrm{ind}(D_{E_f})=\ls \mathrm{Ch}(E_f)\smile \mathrm{Td}(M),[M] \rs\]
\end{thm}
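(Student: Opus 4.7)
\emph{Fredholmness.} My plan splits the argument into first proving that $T_f$ is Fredholm and then identifying its index with that of a twisted Dirac operator. For Fredholmness I would rely on the Epstein-Melrose formulation in which $S$ is a Heisenberg pseudodifferential projector whose principal symbol, at each point of the positive contact cone $\Sigma\subset T^*M\setminus 0$, is the rank-one projection onto the vacuum of the Bargmann-Fock representation of the osculating Heisenberg group. The Heisenberg principal symbol of $T_f=S\mathcal{M}_f S$ acting on the range of $S$ would then be essentially $f$ tensored with this vacuum projection, so invertibility of $f$ would yield Toeplitz-ellipticity, a parametrix modulo smoothing operators, and hence Fredholmness.

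\emph{Construction of $D_{E_f}$.} Next I would observe that the invertible bundle automorphism $f$ defines a class in $\K^1(M)$, or equivalently, viewed as clutching data for $E$ along the contact cone, a symbol class $[E_f]\in\K^0(TM)$. Since the contact distribution $H\subset TM$ carries a compatible almost complex structure, $M$ is canonically $\mathrm{Spin}^c$, and $D_{E_f}$ can be taken as the associated twisted Dolbeault-Dirac operator representing $[E_f]$ analytically. Atiyah-Singer applied to $D_{E_f}$ would then give $\mathrm{ind}(D_{E_f})=\ls\mathrm{Ch}(E_f)\smile\mathrm{Td}(M),[M]\rs$, reducing the problem to showing $\mathrm{ind}(T_f)=\mathrm{ind}(D_{E_f})$.

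\emph{Matching the two indices; main obstacle.} To establish this equality I would follow the strategy of Baum and van Erp, working with the Heisenberg adiabatic (tangent) groupoid $\mathbb{T}_H M$, which interpolates between the pair groupoid at $t>0$ and the bundle of osculating Heisenberg groups at $t=0$. The operator $T_f$ would lift to a smooth section of the convolution $C^*$-algebra of $\mathbb{T}_H M$, inducing a $\KK$-class that evaluates to $[T_f]$ at $t=1$ and to a class on the Heisenberg tangent bundle at $t=0$. The main obstacle is the representation-theoretic step of identifying this $t=0$ class with the ordinary principal symbol of $D_{E_f}$ on $T^*M$: one has to conjugate the Heisenberg symbol of $T_f$ by the Bargmann-Fock vacuum projection and exploit the symplectic structure on the contact cone to convert Heisenberg symbols into classical elliptic ones. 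Once this symbolic identification is secured, a standard deformation argument in the K-theory of $C^*(\mathbb{T}_H M)$ would yield equality of the two indices, and the cohomological formula would follow immediately from Atiyah-Singer.
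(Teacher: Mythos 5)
Your plan is essentially the route this paper takes: the paper only cites Boutet de Monvel for this statement, but its Section \ref{Section: Family index} proves the family generalization exactly along your lines --- Fredholmness from invertibility of the symbol in the Toeplitz extension, and identification of the index class via the Heisenberg (filtered) adiabatic groupoid, the Connes--Thom map \(\Psi\), and the Bargmann--Fock/clutching computation of the symbol of \(\widetilde{T}_f = T_f + (1-S)\), i.e.\ the Baum--van Erp argument specialized to \(B = \mathrm{pt}\). The only loose phrase is taking \(D_{E_f}\) to be a twisted Dolbeault--Dirac operator on \(M\) itself: since \(M\) is odd-dimensional one should either pair \([f]\in \K^1(M)\) with the \(\mathrm{Spin}^c\)-Dirac K-homology class of \(M\), or equivalently twist the Dirac operator on \(M\times\mathbb{S}^1\) by the clutching bundle \(E_f\), which is how the paper's final theorem states it.
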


Toeplitz operators were also used by Guillemin \cite{Guillemin1995} to construct a deformation quantization (i.e. a star-product) of symplectic manifolds through analytic means. He proved the following result:

\begin{thm}[Guillemin \cite{Guillemin1995}]\label{Theorem: Guillemin Theorem deformation quantization}
    Let \(M\) be a compact prequantizable symplectic manifold, \(X \to M\) the corresponding circle bundle, seen as a contact manifold. Let \(S\) be a Szegö projector on \(X\) which is equivariant for the \(\mathbb{S}^1\)-action and denote by \(\mathcal{T}^{\bullet}(X)\) the corresponding algebra of Toeplitz operators. There is an isomorphism of graded vector spaces:
    \[\faktor{\mathcal{T}^0(X)^{\mathbb{S}^1}}{\mathcal{T}^{-\infty}(X)^{\mathbb{S}^1}} \cong C^{\infty}(M)[[\hbar]].\]
    The product of Toeplitz operators on the left hand side induces a star-product on \(M\).
\end{thm}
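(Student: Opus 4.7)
The plan is to exploit the filtered-algebra structure of Toeplitz operators furnished by the Heisenberg calculus, transport the symbolic data down to \(M\) using \(\mathbb{S}^1\)-equivariance of \(S\), and let \(\hbar\) correspond to the inverse of the generator of the \(\mathbb{S}^1\)-action on \(H^2(X)\). In that framework, \(\mathcal{T}^\bullet(X)\) is filtered by order, with short exact sequences whose quotients consist of homogeneous functions on the symplectic cone \(\Sigma\subset T^*X\) generated by the contact form. Composition of Toeplitz operators is multiplicative on principal symbols, and the commutator of operators of orders \(k\) and \(\ell\) has order \(k+\ell-1\) with principal symbol the Poisson bracket of the symbols on \(\Sigma\).

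Because the prequantization circle bundle \(X\to M\) realizes \(M\) as the symplectic reduction of \(\Sigma\) by \(\mathbb{S}^1\) at any positive level of the moment map \(\lambda\), an invariant homogeneous function of degree \(k\) on \(\Sigma\) is determined by its restriction to \(\{\lambda=1\}/\mathbb{S}^1\cong M\), hence by a smooth function on \(M\). The infinitesimal generator of the \(\mathbb{S}^1\)-action yields an invariant first-order Toeplitz operator \(\mathcal{R}\) with principal symbol \(\lambda\); it acts on the isotypical summand \(H_n\) of \(H^2(X)\) as the scalar \(n\), so it admits an inverse on \(H^2(X)\) modulo a finite-rank operator.

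Having set this up, I would define \(\Phi\colon\mathcal{T}^0(X)^{\mathbb{S}^1}\to C^\infty(M)[[\hbar]]\) inductively. Given an invariant order-\(0\) Toeplitz operator \(T\), its principal symbol is invariant and of degree \(0\), hence identifies with a function \(a_0\in C^\infty(M)\). Choosing any invariant quantization \(\hat a_0\) of \(a_0\) (e.g.\ \(S\mathcal{M}_{\pi^*a_0}S\)), the difference \(T-\hat a_0\) lies in \(\mathcal{T}^{-1}(X)^{\mathbb{S}^1}\), so \(\mathcal{R}(T-\hat a_0)\) is again invariant of order \(0\) with principal symbol \(a_1\in C^\infty(M)\); iterating gives \(\Phi(T)=\sum_{k\geq 0}a_k\hbar^k\). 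Surjectivity follows by the same induction, assembling a representative of any prescribed formal series through successive applications of a right inverse of \(\mathcal{R}\) together with Borel-type summation inside the Heisenberg calculus. The kernel consists of invariant operators whose iterated symbols all vanish, which by asymptotic completeness of the calculus are exactly the smoothing ones.

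For the star-product statement, composition descends to the quotient since \(\mathcal{T}^{-\infty}(X)^{\mathbb{S}^1}\) is a two-sided ideal in \(\mathcal{T}^0(X)^{\mathbb{S}^1}\). Multiplicativity of the principal symbol yields \(\Phi(TT')=fg+O(\hbar)\), while the \(\hbar\)-term in \(\Phi([T,T'])\) is, by the commutator principal-symbol formula, the reduction to \(M\) of the Poisson bracket of the symbols on \(\Sigma\), matching \(\{f,g\}_M\) up to the universal constant dictated by the prequantization; associativity is automatic from that of operator composition. The main obstacle I expect lies in the inductive step: one must verify that the series \(\Phi(T)\) is independent of the auxiliary choices \(\hat a_j\) modulo \(\mathcal{T}^{-\infty}(X)^{\mathbb{S}^1}\) and that \(\Phi\) descends multiplicatively to the quotient. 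This amounts to constructing a well-behaved symbolic splitting of the filtered \(\mathbb{S}^1\)-invariant subalgebra, which is precisely where equivariance of \(S\) and the spectral properties of \(\mathcal{R}\) do the decisive work.
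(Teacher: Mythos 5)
Your proposal is correct and follows essentially the same strategy as the argument the paper relies on (Guillemin's original scheme, reproduced in the paper's family version in Section \ref{Section: Star products}): use the invariant operator built from the \(\mathbb{S}^1\)-generator as the degree-one element whose inverse plays \(\hbar\), peel off principal symbols inductively, conclude surjectivity by asymptotic completeness and identify the kernel with smoothing operators, and obtain the star-product axioms from symbol multiplicativity plus the commutator/Poisson-bracket formula. The only difference is presentational: you work in the classical Boutet de Monvel--Guillemin framework on the symplectic cone where \(D_R\) has order one, whereas the paper's Heisenberg-calculus version assigns \(D_R\) order two and therefore passes to a square root \(T\) with \(T^2 = D_R\) modulo smoothing before inverting.
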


In this paper, we wish to generalize both of these result for families of contact manifold. The proper setting for that is the one of contact fibrations in the sense of Lerman \cite{Lerman_2004}. We recall this concept in Section \ref{Section: Fibrations}. For our goal of deformation quantization, we also introduce a concept of prequantization for symplectic fibrations and relate them to contact fibration in the spirit of Boothby-Wang \cite{BoothbyWang1958}. We then introduce the Heisenberg calculus for contact manifolds and contact fibrations in Section \ref{Section: Heisenberg Calculus}, following \cite{EpsteinMelroseUnpublished} and the more recent \cite{vanErpYuncken2019}. We then prove the existence of continuous families of Szegö projections in Section \ref{Section: Szego and Toeplitz families} and study the corresponding families of Toeplitz operators. In Section \ref{Section: Star products} we construct a star product on prequantizable symplectic fibrations using Toeplitz operators. Finally in Section \ref{Section: Family index}, we tackle the index problem for families of Rockland (i.e. elliptic in the Heisenberg calculus) operators on a contact fibration. The index of families of Toeplitz operators is then a corollary of the general formula. Our proof is an adaptation of the proof of Baum and van Erp \cite{BaumvanErp2014} in the family setting. We also compute the Chern character of the index bundles.

\subsection*{Acknowledgements} The second author wants to thank Magnus Goffeng for fruitful discussions on contact families and K-homology. The first author is thankful to Jean-Marie Lescure for explanations on Szegö projections and families of operators on a fibration.

\section{Contact and Symplectic Fibrations} \label{Section: Fibrations}
In this section we recall the notion of contact fibrations of Lerman \cite{Lerman_2004} to fix the notation and  to set the stage.

\begin{defn}
    A contact fibration is a fiber bundle \( X\overset{\pi}{\to} B\) with a co-oriented distribution \(\mathcal{H}\subset TX\) of \(\mathrm{cork}(\mathcal{H})=1\), such that for each fiber \(\pi^{-1}(b)=X_b\) the intersection  \(\mathcal{H}\cap T(X_b)=:H_b\) is a contact distribution on \(X_b\). We will denote contact fiber bundles as \((X\overset{\pi}{\to} B,\mathcal{H})\).
\end{defn}

\begin{rem}\label{Remark: Contact fibration cocycles}
    Equivalently one can define a contact fiber bundle as a fiber bundle \(M\hookrightarrow X\to B\), such that in every local trivialization \(\phi_i:\pi^{-1}(U)\to U\times M\) the fibers are given by a contact manifold \((M,H)\) and the cocycles \(\phi_{j,b}\circ \phi_{i,b}^{-1}: M \to M, b \in U_i\cap U_j\) are orientation preserving contactomorphisms. If we denote by \(\Cont(M,H)_+\) the group of co-orientation preserving contactomorphisms, we can, using these cocycles, associate to \(X\) a principal \(\Cont(M,H)_+\)-bundle. Conversely, to every principal \(\Cont(M,H)_+\)-bundle over \(B\) corresponds a contact fibration \(X \to B\).
\end{rem}

\begin{lem}
    Let \((X\overset{\pi}{\to}B,\mathcal{H})\) be a contact fibration, then there exists a  \(1\)-form \(\Theta\in \Omega^1(X)\) such that \(\ker(\Theta)\cap\mathcal{V}={H}_\mathcal{V}\) i.e. \(\Theta|_{X_b}=:\theta_b\) restricts fiberwise to a contact form.
\end{lem}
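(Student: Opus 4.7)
The plan is to observe that the co-orientation hypothesis is essentially equivalent to the existence of such a global 1-form, so the lemma will follow once that equivalence is carefully unwound.

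\textbf{Global approach.} Since $\mathcal{H}\subset TX$ has corank 1, the quotient $TX/\mathcal{H}$ is a real line bundle over $X$, and a co-orientation is precisely the data of an orientation on this line bundle. Orientable real line bundles are trivializable, so $TX/\mathcal{H}$ admits a nowhere-vanishing section. Dualizing, the annihilator $(TX/\mathcal{H})^{\ast}\hookrightarrow T^{\ast}X$ is a trivializable line subbundle of $T^{\ast}X$, and any trivializing section yields a 1-form $\Theta\in\Omega^1(X)$ with $\ker(\Theta)=\mathcal{H}$ (compatible with the chosen co-orientation). Since $\mathcal{V}=\ker(d\pi)$, we immediately get $\ker(\Theta)\cap\mathcal{V}=\mathcal{H}\cap\mathcal{V}=H_{\mathcal{V}}$, and for each $b\in B$ the restriction $\theta_b:=\Theta|_{X_b}$ has kernel $H_b$, hence is a contact form by the defining assumption that $H_b$ is a contact distribution on $X_b$.

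\textbf{Partition-of-unity alternative.} If one prefers a more explicit construction (matching the cocycle description in Remark \ref{Remark: Contact fibration cocycles}), fix a locally finite trivializing cover $\{U_i\}$ of $B$ with associated trivializations $\phi_i\colon \pi^{-1}(U_i)\to U_i\times M$, and choose a contact form $\theta$ on the model fiber $(M,H)$. Pulling back gives local 1-forms $\Theta_i$ on $\pi^{-1}(U_i)$ whose fiberwise restrictions are contact forms for $H_{\mathcal{V}}$. On overlaps, $\Theta_i|_{X_b}$ and $\Theta_j|_{X_b}$ define the same co-oriented contact distribution on $X_b$, so $\Theta_i|_{X_b}=g_{ij}^b\,\Theta_j|_{X_b}$ for a smooth positive function $g_{ij}^b$ on $X_b$. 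If $\{\rho_i\}$ is a partition of unity on $B$ subordinate to $\{U_i\}$, set $\Theta:=\sum_i(\rho_i\circ\pi)\,\Theta_i$. Then for any $b$ and any $j$ with $\rho_j(b)>0$,
\[
\Theta|_{X_b}=\Bigl(\sum_i\rho_i(b)\,g_{ij}^b\Bigr)\Theta_j|_{X_b},
\]
and the prefactor is a strictly positive function on $X_b$ because the co-orientation forces all $g_{ij}^b$ to be positive. Hence $\Theta|_{X_b}$ is a contact form, and $\ker(\Theta)\cap\mathcal{V}=H_{\mathcal{V}}$.

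\textbf{Main obstacle.} There is no genuine difficulty; the only point that requires care is ensuring that the patched 1-form does not degenerate on any fiber, and this is exactly what the co-orientation hypothesis rules out (it is what makes the functions $g_{ij}^b$ positive rather than merely nonzero). Everything else is bookkeeping.
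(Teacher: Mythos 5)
Your proof is correct. Note that the paper states this lemma without giving any proof at all, so there is nothing to compare against line by line; both of your arguments are valid and standard. The global argument (co-orientation trivializes the quotient line bundle \(TX/\mathcal{H}\), hence the annihilator line subbundle \(\mathcal{H}^{\perp}\subset T^{\ast}X\) admits a nowhere-vanishing section \(\Theta\) with \(\ker\Theta=\mathcal{H}\)) actually proves slightly more than asked, since the lemma only requires \(\ker\Theta\cap\mathcal{V}=H_{\mathcal{V}}\); the one point you use implicitly, and could state, is that \(\theta_b\) is nowhere vanishing on \(X_b\) because \(H_b=\mathcal{H}\cap TX_b\) has codimension one in \(TX_b\), so \(T_xX_b\not\subset\mathcal{H}_x\), and that any nowhere-vanishing \(1\)-form whose kernel is a contact distribution is automatically a contact form. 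Your partition-of-unity alternative is the family-adapted version of the gluing the authors themselves perform in the remark following Proposition \ref{Proposition: Prequantization Symplectic fibration} for symplectic fibrations, with the correct extra observation that in the contact case the local forms only agree fiberwise up to positive functions \(g^b_{ij}\) (co-orientation preservation of the cocycles), so convex combinations remain defining contact forms. Either route suffices.
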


In this particular example both the fibers and the total space admit contact structures, which are compatible. In (\cite{Lerman_2004}, Proposition 3.1) a condition is given, when the total space of a contact fibration is a contact manifold itself.

\begin{exmp}(Quaternionic Hopf Fibration)
    \((\mathrm{Sp}_1\to \mathbb{S}^{4n+3}\to \mathbb{P}^n(\mathbb{H}),\mathcal{H}_\xi)\). Endow \(\mathbb{S}^{4n+3}\subset\mathbb{H}^{n+1}\), with the round metric obtained from \(\mathbb{H}^{n+1}\). In this way \(\mathbb{S}^{4n+3}\) admits a natural \(3\)-Sasaki structure. Let \(\mathcal{V}:=\ker(\mathrm{d}\pi)\subset T\mathbb{S}^{4n+3}\) denote the vertical tangent bundle and define \(\mathcal{H}_{\mathrm{3-Sas}}:=\mathcal{V}^\perp\) to be the orthogonal complement, with respect to the round metric. Since \(\mathcal{V}\) is \(\mathrm{Sp}_1\) invariant, so is \(\mathcal{H}_{\text{3-Sas}}\). Hence we find a unique connection one form \(\omega\in \Omega^1(\mathbb{S}^{4n+1},\mathfrak{sp}_1)\), such that \[\mathcal{H}_\mathrm{3-Sas}=\ker\omega,\quad \omega(X^\vee)=X, \quad r^*_g\omega=\mathrm{Ad}_{g^{-1}}\circ\omega,
    \]
    for all fundamental vector fields \(X^\vee\) and \(g\in \mathrm{Sp}_1\). Any Sasaki structure in \(\mathbb{S}^{4n+3}\) can be realized as \(\eta_\xi:=\xi\circ \omega\in \Omega^1(\mathbb{S}^{4n+3})\) for \(\xi \in \mathfrak{sp}^*_1\setminus \{0\}\). We denote \(\mathcal{H}_\xi:=\ker{\eta_\xi}\). In particular this defines a contact structure in \(\mathbb{S}^{4n+3}\).

    We now want to construct a contact form on \(\mathbb{S}^3\cong \mathrm{Sp}_1\) from the Maurer Cartan form \(\mu_{\mathrm{Sp}_1}\), which will be equivalent to the standard contact structure of \(\mathbb{S}^3\). We compose \(\eta_\xi:=\xi\circ \mu_{\mathrm{Sp}_1}\in \Omega^1(\mathrm{Sp}_1)\) to get a one form. 
    \[\mathrm{d}(\xi\circ \mu_{\mathrm{Sp}_1})(X,Y)=\mathcal{L}_X(\xi\circ\mu_{\mathrm{Sp}_1}(Y))-\mathcal{L}_Y(\xi\circ\mu_{\mathrm{Sp}_1}(X))-\xi\circ\mu_{\mathrm{Sp}_1}([X,Y])\]
    From the Maurer Cartan Equation we get 
    \begin{align*}
        \mathrm{d}\eta_\xi(X,Y)&=\mathrm{d}(\xi\circ \mu_{\mathrm{Sp}_1})(X,Y)\\
                               &=\xi\circ \mathrm{d} \mu_{\mathrm{Sp}_1}(X,Y) \\
                               &=-\frac{1}{2}\xi([\mu_{\mathrm{Sp}_1},\mu_{\mathrm{Sp}_1}](X,Y))\\
                               &=-\xi([X,Y])\\
                               &=\mathrm{ad}^*_X\xi(Y)
    \end{align*}
    From the orbit decomposition of \(\mathfrak{su}^*_2\) we conclude that if \(\mathrm{d}\eta_\xi(X,Y)=0\) for all \(Y\), then \(X=0\). Hence, the \(2\)-form is nondegenerate.
    Under the (non-canonical) identification \[\Phi_p:G\to P_b \quad g\mapsto p\cdot g \] the connection one form \(\omega|_{P_b}\) pulls back to the Maurer-Cartan form \[\Phi^*_p(\omega|_{P_b})=\mu_{\mathrm{Sp}_1}\] Therefore the distribution \(\mathcal{H}_\xi\cap T(X_b)\) restricts precisely to the contact distribution \(H=\ker(\xi\circ \mu_{\mathrm{Sp}_1})\) under the identification \(\Phi_p\).
    \end{exmp}

\begin{exmp}(Prequantum \(\mathbb{S}^1\)-bundles over principally polarized Abelian varieties) As smooth manifolds principally polarized Abelian varieties are complex tori \(T_\tau\cong \faktor{\mathbb{C}^g}{\Lambda_\tau}, \tau \in M_g(\mathbb{C})\), but not every complex torus is a principally polarized Abelian variety. A complex torus is an Abelian variety, if its period matrix satisfies the Riemann bilinear relation. The Kodaira embedding theorem gives an equivalent condition, of existence of an ample line bundle. It turns out this ample line bundle is a prequantum line bundle, to which the associated principal \(\mathbb{S}^1\)-bundle inherits a natural contact structure. We denote the Siegel half space by \[\mathfrak{H}_g:=\{\tau\in M_{g}(\mathbb{C})|\tau=\tau^t, \mathfrak{Im}\tau>0\}.\]
There is a universal family of principally polarized abelian varieties. Consider \(\mathfrak{H}_g\times\mathbb{C}^g\) with the free, proper and holomorphic action \[\mathbb{Z}^{2g}\curvearrowright \mathfrak{H}_g\times\mathbb{C}^g \quad (m,n)\cdot(\tau,z):=(\tau,z+m+\tau n)\] We denote the quotient by \(\mathcal{X}_g:=(\mathfrak{H}_g\times\mathbb{C}^g)/\mathbb{Z}^{2g}\) and refer to it as the universal family. The quotient map is given by \[\pi: \mathcal{X}_g\to \mathfrak{H}_g \quad [t,z]\mapsto \tau\]
The fiber over \(\tau\) is \(\pi^{-1}(\tau)\cong \faktor{\mathbb{C}^g}{\mathbb{Z}^g+\tau\mathbb{Z}^g}=T_\tau\). Each fiber can be endowed with a Kähler form \(\omega_\tau:=\mathfrak{Im}H_\tau\), where \(H_\tau\) is the Hermitian form with respect to the almost complex structure induced from \(\tau\). The constructed bundle is a holomorphic family of symplectic manifolds. Moreover, since the Kähler classes represent principal polarizations, these fibers are prequantizable. We want to construct a smooth family of contact manifolds in a similar fashion. Since \(\mathfrak{H}_g\) is contractible, \(\mathcal{X}_g\cong\mathfrak{H}_g\times \mathbb{T}^{2g}\) is smoothly trivial as a fiber bundle. The Künneth theorem then gives \(H^*(\mathcal{X}_g;\mathbb{Z})\cong H^*(\mathbb{T}^{2g};\mathbb{Z})\), so every principal \(\mathbb{S}^1\)-bundle on \(\mathcal{X}_g\) is determined by a class \(H^*(\mathbb{T}^{2g};\mathbb{Z})\). We pick the class of the standard symplectic form \(\omega\) on \(\mathbb{T}^{2g}\) and pull it pack to \(\Omega:=\mathrm{pr}_2^*\omega\) on \(\mathcal{X}_g\). This class determines a principal \(\mathbb{S}^1\)-bundle on \(\mathcal{X}_g\). To describe this bundle more explicitly consider the trivial holomorphic line bundle \[\mathfrak{H}_g\times \mathbb{C}^g\times \mathbb{C}\to\mathfrak{H}_g\times \mathbb{C}^g \]
and define an action \[\mathbb{Z}^{2g}\curvearrowright \mathfrak{H}_g\times\mathbb{C}^g \times \mathbb{C}\quad (m,n)\cdot(\tau,z,\zeta):=(\tau,z+m+\tau n,\alpha_{(m,n)}\zeta)\]where \(\alpha_{(m,n)}(\tau,z):=\exp(\pi i (n^t\tau n+2n^t z))\) denotes the factor of automorphy. We define \[\mathcal{L}_g:=\faktor{\mathfrak{H}_g\times\mathbb{C}^g \times \mathbb{C}}{\mathbb{Z}^{2g}}\] with projection map \[p:\mathcal{L}_g\to \mathfrak{H}_g \quad [\tau,z,\zeta] \mapsto \tau\]
Each fiber over \(\tau\) is \(p^{-1}(\tau)=\mathcal{L}_g|_\tau\) is the total space of the prequantum line bundle over \(T_\tau\). There is a principal \(\mathbb{S}^1\)-bundle \(\mathcal{P}_g\to \mathfrak{H}_g\), such that every fiber \(P_\tau\) is  the total space of prequantum \(\mathbb{S}^1\)-bundle over \(T_\tau\). The Chern connection of \(\mathcal{L}_g\) defines an \(\mathbb{S}^1\)-invariant form \(\Theta\in \Omega^1(\mathcal{P}_g)\), such that \[\iota_R\Theta=1 \quad \mathcal{L}_R\Theta=0 \quad \mathrm{d}\Theta=p^*\Omega\]where \(R\in \mathfrak{X}(\mathcal{P}_g)\) denotes the generator of the \(\mathbb{S}^1\)-action.  Define \[\mathcal{H}:=\ker(\Theta)\subset T\mathcal{P}_g\] By construction \(\mathcal{H}\cap TP_\tau\) restricts to a contact distribution on each fiber, which coincides with the canonical contact distribution of the prequantum \(\mathbb{S}^1\)-bundle of the fiber.
\end{exmp}

The latter example can be abstracted to a symplectic fibration \(S\to B\) of prequantizable symplectic manifolds and its associated prequantum \(\mathbb{S}^1\)-bundle \(X\to B\).

\begin{defn}
 A symplectic fibration is a fiber bundle \(S\overset{p}{\to}B\) together with a two form \(\Omega\in \Omega^2(S)\) such that \(\Omega|_{S_b}\) is a symplectic form on the fiber \(S_b=p^{-1}(b)\). We will denote symplectic fibrations by \((S\overset{p}{\to}B,\Omega)\)
\end{defn}

\begin{rem}
    Equivalently, a symplectic fibration can be defined as a smooth fiber bundle \(M\to S\to B\), where each fiber is diffeomorphic to a symplectic manifold \((M,\omega)\) and the transition functions take vales in the group of symplectomorphisms, i.e. \(\phi_{ij}:U\to \mathrm{Sympl}(M,\omega)\).
\end{rem}

\begin{defn}
        On a closed manifold \(X\) we call a vector field \(R\) regular, if the foliation generated by its flow is a fibration.
        Let \((X\to B,\mathcal{H})\) be a contact fibration. A defining \(1\)-form \(\Theta\in \Omega^1(X)\) is called regular, if the associated vertical Reeb vector field is regular. A pair \((X\to B,\Theta)\) is called a regular contact fibration.
\end{defn}

\begin{rem}
    Given a regular vector field \(R\) on a closed manifold \(X\), then all of its orbits are closed circles.
\end{rem}

Given a closed prequantizable symplectic manifold, i.e. a symplectic manifold \((M,\omega)\) such that \([\omega]\in \mathrm{im}(H^2(M;\mathbb{Z})\to H_{dR}^2(M))\) there exists a canonical principal \(\mathbb{S}^1\)-bundle \(\mathbb{S}^1\to P\to M\) with a connection one form \(\theta\), whose curvature is \(\omega\). This \(\theta\) is a regular contact form on \(P\) and the Reeb vector field \(R_\theta\) generates the \(\mathbb{S}^1\) action. Given on the other hand such a closed regular contact manifold \((P,\theta)\) it is the total space of a principal \(\mathbb{S}^1\)- bundle over \(M\) and the quotient \(\mathbb{S}^1\backslash P\) is a closed symplectic manifold. This relation is often referred to as the Boothby-Wang construction \cite{BoothbyWang1958}. The next two propositions are a family version of the this relation.

\begin{prop}
\label{Proposition: Prequantization Symplectic fibration}
    Let \((S\overset{\pi_S}{\to} B,\Omega)\) be a symplectic fibration with a closed \(2\)-form \(\Omega\) and let \([\frac{\Omega}{2\pi}]=e\in H^2(S;\mathbb{Z})\) be an integral cohomology class, such that \([\frac{\omega_b}{2 \pi}]=i_b^*e\in H^2(S_b;\mathbb{Z})\) for all \(b\in B\). Then there exists a principal \(\mathbb{S}^1\)-bundle \(X\overset{p}{\to} S\) and a connection one form \(\Theta\), such that the total space is a contact fibration \((X\overset{\pi_X}{\to}B,\Theta)\).
\end{prop}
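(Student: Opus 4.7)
The plan is to build $X$ as the prequantum $\mathbb{S}^1$-bundle on the total space $S$ associated to the integral class $e$, equip it with a connection whose curvature is $\Omega$, and then verify the contact fibration condition by restricting to fibers and invoking the classical Boothby--Wang correspondence.

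First, since $e\in H^2(S;\mathbb{Z})$ is integral, standard classifying space arguments (or \v Cech cohomology) produce a principal $\mathbb{S}^1$-bundle $p\colon X\to S$ with first Chern class $e$. By Chern--Weil theory the affine space of connections on $X$ with curvature representing $2\pi\cdot e$ is nonempty, so I pick a connection one-form $\Theta\in\Omega^1(X)$ with $\mathrm{d}\Theta = p^*\Omega$ (up to the normalization convention used for $e=[\Omega/2\pi]$). Set $\pi_X := \pi_S\circ p\colon X\to B$; local trivializations of $S\to B$ combined with the principal bundle structure of $p$ make $X\to B$ into a fiber bundle whose typical fiber is the prequantum circle bundle of the model symplectic fiber $(M,\omega)$.

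Next I analyze the fibers. For $b\in B$ the fiber $X_b := \pi_X^{-1}(b) = p^{-1}(S_b)$ is itself a principal $\mathbb{S}^1$-bundle $p_b\colon X_b\to S_b$, and its first Chern class equals $i_b^*e = [\omega_b/2\pi]$ by the hypothesis. Hence $X_b$ is (up to isomorphism) the prequantum $\mathbb{S}^1$-bundle of $(S_b,\omega_b)$. The restriction $\theta_b := \Theta|_{X_b}$ is still $\mathbb{S}^1$-invariant and satisfies $\iota_{R}\theta_b = 1$ for the generator $R$ of the $\mathbb{S}^1$-action, and
\[
\mathrm{d}\theta_b \;=\; (\mathrm{d}\Theta)|_{X_b} \;=\; (p^*\Omega)|_{X_b} \;=\; p_b^*\omega_b,
\]
so $\theta_b$ is a prequantum connection form on $X_b\to S_b$. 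The classical Boothby--Wang theorem then guarantees that $\theta_b$ is a contact form on $X_b$ and its Reeb vector field is exactly $R$.

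Finally, I define $\mathcal{H} := \ker(\Theta)\subset TX$; this is a co-oriented distribution of corank $1$, and by the computation above the fiberwise intersection $\mathcal{H}\cap TX_b = \ker(\theta_b)$ is a contact distribution on every $X_b$. Therefore $(X\overset{\pi_X}{\to} B,\Theta)$ is a contact fibration in the sense of the definition in Section~\ref{Section: Fibrations}, completing the proof.

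The main obstacle is essentially bookkeeping: one must check that the global connection one-form $\Theta$ on $X\to S$ pulls back correctly along the inclusions of fibers so that the prequantum relation $\mathrm{d}\theta_b = p_b^*\omega_b$ holds, and that the Reeb normalization is inherited. Once this is verified, the contact character on each fiber is immediate from Boothby--Wang and no additional transversality argument is needed because $\Theta$ was constructed globally rather than fiber-by-fiber.
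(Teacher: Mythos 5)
Your proposal is correct and follows essentially the same route as the paper: construct the circle bundle from the integral class \(e\), choose a connection whose curvature is exactly \(p^*\Omega\) (the paper makes explicit the adjustment \(\Theta=\tilde{\Theta}+p^*\lambda\) that you compress into the Chern--Weil remark), and verify \(\mathrm{d}\theta_b=p_b^*\omega_b\) on each fiber. The only cosmetic difference is the final step, where you invoke Boothby--Wang for contactness of \(\theta_b\), whereas the paper checks directly that \(\theta_b\wedge(\mathrm{d}\theta_b)^{\wedge n}\) is a volume form.
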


\begin{proof}
    The space \(H^2(S;\mathbb{Z})\) classifies the principal \(\mathbb{S}^1\)-bundles over \(S\), hence there is a principal \(\mathbb{S}^1\)-bundle \(\mathbb{S}^1\to X\to S\), with \(c_1(X)=e\). Let \(\tilde\Theta\in \Omega^1(X)\) be a connection one form. Then \(\mathrm{d}\tilde{\Theta}=p^*\tilde\Omega\) for some \(\tilde{\Omega}\in \Omega^2(S)\) in the same cohomology class as \(\Omega\), hence there exists \(\lambda \in \Omega^1(S)\) such that \(\Omega-\tilde{\Omega}=\mathrm{d}\lambda\). We define a new connection \(1\)-form \[\Theta=\tilde{\Theta}+p^*\lambda\]
    The curvature of \(\Theta\) is given by 
    \[\mathrm{d}\Theta+\frac{1}{2}[\Theta,\Theta]=\mathrm{d}\Theta=\mathrm{d}\tilde{\Theta}+p^*\mathrm{d}\lambda=p^*\Omega\]
    We need to show that the total space \(X\) is indeed a contact fiber bundle over \(B\). We define the projection \(\pi_X=\pi_S\circ p: X\to B\). For each \(b\in B\) the fibers \(\pi^{-1}_X(\{b\})=p^{-1}(S_b)=:X_b\) is a principal-\(\mathbb{S}^1\)-bundle over \(S_b\) with bundle map \(p_b:=p|_{X_b}\). We denote the restriction of \(\Theta\) to \(X_b\) by \(\theta_b:=\Theta|_{X_b}\).The fibers are related by 
\[\begin{tikzcd}
	{X_b} & X \\
	{S_b} & S
	\arrow["{j_b}", hook, from=1-1, to=1-2]
	\arrow["{p_b}"', from=1-1, to=2-1]
	\arrow["p", from=1-2, to=2-2]
	\arrow["{i_b}", hook, from=2-1, to=2-2]
\end{tikzcd}\]
Therefore, we have \[\mathrm{d}\theta_b=j_b^*\mathrm{d}\Theta=j^*_b(p^*\Omega)=p_b^*(i_b^*\Omega)=p^*_b\omega_b\] This is precisely the fiberwise prequantization condition on \(S_b\). Moreover \(\dim X_b=\dim S_b+1\) and \(\theta_b\wedge(d\theta_b)^{\wedge n}=d\theta_b\wedge(p_b^*\omega_b)^{\wedge n}\) defines a volume form, as \(p^*_b\omega_b\) is nowhere vanishing on the fiberwise horizontal distribution.
\end{proof}

\begin{rem}
    It is not always the case, that the one can find a closed \(2\)-form \(\Omega\)on the symplectic fibration \(S\to B\), which restricts to the fiberwise symplectic structures. The Hopf surface \(\mathbb{S}^3\times \mathbb{S^1}\) viewed as a principal \(\mathbb{T}^2\)-bundle \[\mathbb{T}^2\to \mathbb{S}^3\times \mathbb{S}^1\to \mathbb{S}^2\] can never admit such a form, as \(\mathrm{H}^2(\mathbb{S}^3\times \mathbb{S}^1)=\{0\}\). It is however a symplectic fibration for the constant standard symplectic structure on the fibers. However there always exists some \(2\)-form on \(S\), which is obtained by pulling back the fiberwise two forms and gluing them coherently as follows.
    Let \(\{U_i\}_{i\in I}\) be a locally trivializing cover of \(B\) and \(\{\rho_i\}_{i\in I}\) a partition of unity subordinate to it. Over such an open set we have:
    \[S|_{U}\overset{\phi_U}{\to}U\times M\overset{\mathrm{pr}_2}{\to M}\]
    Since \((M,\omega)\) is the symplectic model fiber and cocycles take values in  \(\mathrm{Sympl}(M,\omega)\), the pullback \(\Omega_U:=(\mathrm{pr}_2\circ\phi_U)^*\omega\in \Omega^2(S|_{U})\) is a well defined \(2\)-form which restricts fiberwise to a symplectic form. We now define \[\Omega:=\sum_{i\in I}\pi^*\rho_i \Omega_{U_i}\]
\end{rem}

Conversely, assume that \(S\) admits a principal \(\mathbb{S}^1\)-bundle \(X \to S\) such that \(X\) is the total space of a contact fibration \((X\overset{\pi}{\to}B,\Theta)\), and such that the curvature of \(\Theta\) is \(\Omega\). It is then an easy consequence of Chern-Weil theory that the class \(\left[\frac{\Omega}{2\pi} \right] \in H^2(S;\mathbb{Z})\). Therefore the two conditions of Proposition \ref{Proposition: Prequantization Symplectic fibration} are in fact equivalent.

\begin{defn}
    A symplectic fibration \(S\to B\) satisfying the conditions of Proposition \ref{Proposition: Prequantization Symplectic fibration} is called prequantizable. 
\end{defn}

\begin{rem}
    If \(S \to B\) is a prequantizable symplectic fibration, then since \(\iota_b^*\Omega = \omega_b\) then the fibers \((S_b,\omega_b)\) are prequantizable as symplectic manifolds.
\end{rem}

\begin{prop}
    Let \((X\to B,\Theta)\) be a regular contact fibration, such that all orbits of the Reeb flow have the same period \(2\pi\). Then \(\Theta\) is a connection \(1\)-form of a on the total space of \(S\) of a symplectic fibration \((S\to B,\Omega)\), with curvature \(\mathrm{d}\Theta=p^*\Omega\).
\end{prop}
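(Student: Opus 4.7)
The plan is to run the Boothby--Wang construction fiberwise while simultaneously using the Reeb flow to build a global principal $\mathbb{S}^1$-bundle on the total space. First I would exploit the hypothesis that the vertical Reeb vector field $R$ (determined by $\Theta(R)=1$ and $\iota_R\mathrm{d}\Theta|_{\mathcal{V}}=0$) is regular with all orbits of period $2\pi$. Since $R$ is vertical for $\pi_X : X \to B$, its flow preserves each fiber $X_b$, and it integrates globally to a free $\mathbb{S}^1$-action on $X$ that is fiberwise over $B$. The quotient $S := X/\mathbb{S}^1$ is therefore a smooth manifold, $p: X\to S$ is a principal $\mathbb{S}^1$-bundle, and the induced map $\pi_S: S\to B$ is a fiber bundle with fibers $S_b = X_b/\mathbb{S}^1$, so that $\pi_X = \pi_S\circ p$.

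Next I would make $\Theta$ into a genuine connection $1$-form by averaging it over the $\mathbb{S}^1$-action: set $\bar\Theta := \frac{1}{2\pi}\int_0^{2\pi}\phi_t^*\Theta\, \mathrm{d}t$, where $\phi_t$ is the flow of $R$. The key observation is that on each fiber $X_b$, the form $\theta_b$ is invariant under its own Reeb flow (because $\mathcal{L}_{R}\theta_b = \iota_R\mathrm{d}\theta_b + \mathrm{d}\iota_R\theta_b = 0$), so $\bar\Theta|_{X_b} = \theta_b$. In particular $\bar\Theta(R) = 1$ and $\ker\bar\Theta\cap\mathcal{V} = H_\mathcal{V}$, so replacing $\Theta$ by $\bar\Theta$ does not alter the contact-fibration data. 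By construction $\mathcal{L}_R\bar\Theta = 0$, hence $\iota_R\mathrm{d}\bar\Theta = \mathcal{L}_R\bar\Theta - \mathrm{d}(\iota_R\bar\Theta) = 0$, and $\bar\Theta$ is an $\mathbb{S}^1$-invariant connection $1$-form for the bundle $p: X\to S$.

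Because $\mathrm{d}\bar\Theta$ is both $\mathbb{S}^1$-invariant and horizontal, it is basic: there exists a unique $\Omega\in\Omega^2(S)$ with $p^*\Omega = \mathrm{d}\bar\Theta$, and $\Omega$ is closed. To see that $\Omega$ endows $S\to B$ with a symplectic fibration structure, I would restrict to a fiber: with $j_b : X_b\hookrightarrow X$ and $i_b : S_b\hookrightarrow S$, the naturality square $p\circ j_b = i_b\circ p_b$ (as in the proof of Proposition \ref{Proposition: Prequantization Symplectic fibration}) yields
\[
p_b^*(i_b^*\Omega) \;=\; j_b^*(p^*\Omega) \;=\; j_b^*\mathrm{d}\bar\Theta \;=\; \mathrm{d}\theta_b.
\]
Since $X_b\to S_b$ is the prequantum $\mathbb{S}^1$-bundle associated by classical Boothby--Wang to the regular contact manifold $(X_b,\theta_b)$ with Reeb period $2\pi$, the form $\omega_b := i_b^*\Omega$ is symplectic on $S_b$, and $(S\overset{\pi_S}{\to}B,\Omega)$ is a symplectic fibration whose prequantization is recovered exactly by $\bar\Theta$.

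The only subtle point — and I would call it the main technical obstacle — is the averaging step: one must make sure that passing from $\Theta$ to $\bar\Theta$ neither destroys the defining condition $\ker(\cdot)\cap \mathcal{V} = H_\mathcal{V}$ nor changes the Reeb field. Both follow from the identity $\bar\Theta|_{X_b} = \theta_b$, which rests on invariance of each $\theta_b$ under its Reeb flow. Once averaging is justified, the rest is a direct assembly of the fiberwise Boothby--Wang theorem with standard principal-bundle quotient arguments.
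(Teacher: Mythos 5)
Your argument follows the same route as the paper's proof: quotient \(X\) by the \(\mathbb{S}^1\)-action generated by the Reeb flow, push \(\mathrm{d}\Theta\) down to a form \(\Omega\) on \(S=X/\mathbb{S}^1\), observe that \(\pi_X\) factors through \(p\), and check the fiberwise structure. The genuine differences are in the care taken at two points. First, the invariance issue: the paper simply treats \(\Theta\) as \(\mathbb{S}^1\)-invariant (it lists \(\mathcal{L}_R\Theta=0\) and \(\iota_R\Theta=1\) as the characterizing properties of a connection form and proceeds from there), whereas for a contact fibration the vertical Reeb field only gives \(\iota_R\mathrm{d}\Theta=0\) on vertical vectors, so invariance in horizontal directions is not automatic; your averaging step produces an honest invariant form \(\bar\Theta\) with the same fiberwise restrictions \(\theta_b\), after which \(\mathrm{d}\bar\Theta\) is basic and descends. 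The price is that you prove the conclusion for \(\bar\Theta\) rather than for \(\Theta\) itself; since the asserted identity \(\mathrm{d}\Theta=p^*\Omega\) forces \(\iota_R\mathrm{d}\Theta=0\) globally, one must either read the hypothesis as including this invariance (as the paper's proof tacitly does) or accept the replacement of \(\Theta\) by its average, and it is worth flagging this explicitly. Second, your fiberwise verification is more complete than the paper's: from \(p_b^*(i_b^*\Omega)=\mathrm{d}\theta_b\) and the classical Boothby--Wang theorem you obtain nondegeneracy of \(\omega_b\), whereas the paper's proof only checks that the restricted form is closed (automatic, since its pullback is exact) and does not address nondegeneracy. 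So, modulo the \(\Theta\)-versus-\(\bar\Theta\) caveat, your proposal is correct and, if anything, more careful than the printed argument.
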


\begin{proof}
    Since all of the orbits have the same period and the vertical Reeb vector field \(R\) is nowhere vanishing, there is a free and proper \(\mathbb{S}^1\)-action on \(X\) induced by the Reeb-flow. Hence, \(X\) is the total space of a principal \(\mathbb{S}^1\)-bundle. The connection \(1\)-forms \(\Theta\) of principal \(\mathbb{S}^1\)-bundles are characterized by two properties: 
    \begin{itemize}
        \item invariance: \(\mathcal{L}_R\Theta=0\)
        \item normalization \(\iota_R\Theta=1\)
    \end{itemize}
    By Cartan's homotopy formula, this implies 
    \[0=\mathcal{L}_R\Theta=\mathrm{d}\iota_R\Theta+\iota_R\mathrm{d}\Theta, \text{ so }\iota_R\mathrm{d}\Theta=0, \text{ and } \mathcal{L}_R\mathrm{d}\Theta=0.\]
    Therefore \(\mathrm{d}\Theta\) induces a well defined \(2\)-form \(\Omega\) on the quotient \(S\cong \faktor{X}{\mathbb{S}^1}\), i.e. \(\mathrm{d}\Theta=p^*\Omega\). Since the Reeb flow is vertical, the projection \(\pi_X\) factorizes through \(p:X\to S\).
\[\begin{tikzcd}
	X && S \\
	& B
	\arrow["p", from=1-1, to=1-3]
	\arrow["{\pi_X}"', from=1-1, to=2-2]
	\arrow["{\pi_S}", from=1-3, to=2-2]
\end{tikzcd}\]

Therefore \(\pi_S\colon S\to B\) is a fibration and \(X_b\to S_b\) is a principal \(\mathbb{S}^1\)-bundle for each \(b\in B\). What is left to show, is that \(\Omega\) defines a fiberwise symplectic form. Restricting to a fiber, we have: 
\[p^*(\mathrm{d}\Omega|_{S_b})=\mathrm{d}(p^*\Omega|_{X_b})=\mathrm{d}\mathrm{d}\Theta|_{X_b}=0.\]
Since the differential \(p_*\) is surjective, the pullback \(p^*\) in injective. Hence, \(\mathrm{d}\Omega|_{S_b}\) is closed.
\end{proof}

\section{Heisenberg Calculus for Contact Fibrations}\label{Section: Heisenberg Calculus}

In this section we recall the construction of Heisenberg calculus on compact contact manifolds. In the whole section, we can consider operators acting on sections of vector bundles. For simplicity however, we will describe the calculus with scalar valued operators. The details of these constructions and their properties can be found in \cite{EpsteinMelrose1998,BaumvanErp2014,vanErpYuncken2019,GorokhovskyvanErp2022} and the references therein.

\subsection{Heisenberg calculus on a contact manifold}

Consider first a contact manifold \((M,H)\). The Heisenberg calculus considers vector fields tangent to \(H\) as differential operators of order 1 but other vector fields as operators of order 2. This assigns a new order to every differential operator. A new pseudodifferential calculus \(\Psi^{\bullet}_H(M)\) is then constructed to allow parametrices to operators elliptic in that calculus. The ellipticity condition in this calculus involves unitary representations in a family of Heisenberg groups naturally associated to the contact manifold \((M,H)\).

The vector bundle \(\mathfrak{t}_HM := H \oplus \faktor{TM}{H}\) has a Lie bracket on its space of sections obtained from the one on \(\mathfrak{X}(M)\). A simple computation shows that this Lie bracket actually comes from a smooth family of Lie algebra structures on the fibers of \(\mathfrak{t}_HM\). Moreover, locally, in Darboux coordinates, this family is easily shown to be a trivial family of Heisenberg Lie algebras. The vector bundle \(\mathfrak{t}_HM\) is then endowed with a structure of locally trivial bundle of Heisenberg Lie algebras. We will denote by \(T_HM\) the bundle of Heisenberg groups integrating this bundle of Lie algebras. The bundle \(\mathfrak{t}_HM\) is called the osculating Lie algebra bundle and \(T_HM\) the osculating group bundle. Note that in more general contexts (with \(H\) not necessarily contact), the Lie algebra structure is not necessarily locally trivial. In that case \(\mathfrak{t}_HM\) and \(T_HM\) are also reffered as the osculating Lie algebroid and osculating groupoid respectively.

We have an action of the positive real numbers on \(\mathfrak{t}_HM\) by Lie algebra automorphisms on the fibers. If \(\lambda > 0\), we get an automorphism:
\[\delta_{\lambda} = \mathrm{diag}(\lambda,\lambda^2) \colon H \oplus \faktor{TM}{H} = \mathfrak{t}_HM \to \mathfrak{t}_HM.\]

\begin{rem}
	If the contact structure is co-oriented, we can pick a contact form \(\theta \in \Omega^1(M)\). We have \(H = \ker(\theta)\) and \(\mathrm{d}\theta\) restricts to a symplectic form on each fiber of \(H\). We can trivialize the bundle \(\faktor{TM}{H}\) using the Reeb field (i.e. the unique vector field \(R \in \ker(\mathrm{d}\theta)\) with \(\theta(R) = 1\)). The bundle of Lie algebras \(\mathfrak{t}_HM\) then becomes isomorphic to the bundle of Heisenberg algebras \(\mathfrak{heis}(H,\mathrm{d}\theta)\) obtained by seeing \(\mathrm{d}_x\theta\) as a cocycle on the abelian Lie algebra \(H_x\) for every \(x \in M\) (it is also trivialized after a 	choice of Darboux coordinates).
\end{rem}

Now for a differential operator \(D\) of order \(k\) in the Heisenberg calculus, we can "freeze the coefficients" at a point \(x \in M\) and obtain a left-invariant operator on the osculating group \(T_{H,x}M\), i.e. an element of the universal enveloping Lie algebra \(\mathcal{U}(\mathfrak{t}_{H,x}M)\). This element depends on a choice of local Darboux coordinates around \(x\) but the highest order part (so in \(\mathcal{U}_k(\mathfrak{t}_{H,x}M)\) where the degree is induced by the grading on the Lie algebra) doesn't. If \(\pi \in \widehat{T_{H,x}M}\) is an irreducible unitary representation, we can also see it as a representation of the universal enveloping Lie algebra by unbounded operators, with domain containing the space of smooth vectors \(\mathcal{H}_{\pi}^{\infty} \subset \mathcal{H}_{\pi}\) (this is actually a core for this operator so it is enough to consider it acting on this space). Denote the corresponding operator by:
\[\sigma^k(D,x,\pi) \colon \mathcal{H}_{\pi}^{\infty} \to \mathcal{H}_{\pi}^{\infty}.\]
We say that \(D\) satisfies the Rockland condition if it is left-invertible for every \(x \in M, \pi \in \widehat{T_{H,x}M} \setminus \{0\}\) where \(0 \in \widehat{T_{H,x}M}\) denotes the trivial representation. This condition is equivalent to the existence of a parametrix in the Heisenberg calculus. This means that we can find an operator \(P \in \Psi^{-k}_H(M)\) with \(PD - 1 \in \Psi^{-\infty}_H(M)\). 
Finally notice that the element of the universal enveloping Lie algebra obtained by freezing the coefficients of \(D\) and keeping the highest order part is homogeneous of degree \(k\) for the dilations \(\delta_{\lambda}, \lambda > 0\). Consequently, identifying \(\mathcal{H}_{\pi}\cong \mathcal{H}_{\delta^*\pi}\) (as \(\delta^*\pi = \pi \circ \delta_{\lambda}\) where \(\delta_{\lambda}\) now seen as a Lie group automorphism) we get:
\[\forall \lambda> 0, \sigma^k(D,x,\delta_{\lambda}^*\pi) = \lambda^k \sigma^k(D,x,\pi).\]

This construction of the principal symbol can be generalized:
\begin{itemize}
\item to pseudodifferential operators, in which case we do not have the intermediate step of having an element of \(\mathcal{U}_k(\mathfrak{t}_{H,x}M)\), but still have a family of unbounded operators on the spaces of non-trivial irreducible unitary representations of the osculating groups. These operators still satisfy the homogeneity condition above (but we can now have \(k \in \mathbb{Z}\) or even \(\mathbb{C}\)).
\item to operators between sections of vector bundles. If \(P\in \Psi^k(M;E,F)\) its principal symbol at a point \(x\in M\) and representation \(\pi \in \widehat{T_{H,x}M} \setminus \{0\}\) is then an operator:
\[\sigma^k(D,x,\pi) \colon \mathcal{H}_{\pi}^{\infty}\otimes E_x \to \mathcal{H}_{\pi}^{\infty}\otimes F_x.\]
\end{itemize}

We will denote by \(\Sigma^k_H(M)\) (or \(\Sigma^k_H(M;E,F)\) in case vector bundles are involved) the space of principal symbols of order \(k \in \mathbb{Z}\). We have the exact sequence:
\begin{equation}\label{Equation: Pseudos Exact sequence}
\xymatrix{0 \ar[r] & \Psi^{k-1}_H(M) \ar[r] & \Psi_H^k(M) \ar[r] & \Sigma^k_H(M) \ar[r] & 0.}    
\end{equation}

To understand the ellipticity condition we thus need to understand the representations of the bundle of Heisenberg group. They are of two kinds: the characters (one dimensional), parameterized by \(H^*\), and the others which are infinite dimensional. The latter are parameterized by \(H^{\perp}\setminus M\) the following way. At each point \(x\in M\), every element \(\theta\in H_x^{\perp}\setminus \{0\} \subset T^*_xM\) provides a symplectic form on \(H_x\) by:
\[(X,Y) \mapsto \theta([X,Y]).\]
The pullback bundle \(H \times_M (H^{\perp}\setminus M) \to H^{\perp}\setminus M\) is then symplectic, we fix a complex structure \(J\) on it, compatible with the symplectic structure (this can be done by taking a metric on the bundle). Now let \(H^{1,0} \to H^{\perp}\setminus M\) be the hermitian vector bundle obtained from the complex structure. We can construct the bundle of symmetric Fock spaces:
\[\mathcal{F}^+(H^{1,0}) := \ell^2-\bigoplus_{k = 0}^{+\infty}\mathrm{Sym}^k(H^{1,0}),\]
as the \(\ell^2\)-completion of the orthogonal sum of all the symmetric tensor products of \(H^{1,0}\) with the induced metric. Elements of \(\mathfrak{t}_HM\) act on \(\mathcal{F}^+(H^{1,0})\) using the so called creation and annihilation operators, see e.g. \cite{BaumvanErp2014,Cren2024Toeplitz} which integrates into a group representation. For each \((x,\theta) \in H^{\perp}\setminus M\), we obtain an irreducible unitary representation of \(T_HM\) on \(\mathcal{F}^+(H^{1,0})_{(x,\theta)}\). This representation is characterized by the fact that its form on the center \(\faktor{T_xM}{H_x} \subset T_{H,x}M\) being given by the character \(\exp(\mathrm{i} \theta)\).

In case a contact structure \(\Theta \in \Omega^1(M)\) has been chosen (if the contact structure is co-oriented), we can construct the bundle of Fock spaces directly on \(M\) by using the symplectic vector bundle \((H,\mathrm{d}\Theta)\) instead. In that case for every \(x \in M\) and \(\lambda > 0\), there is a unique irreducible unitary representation of \(T_{H,x}M\) \(\pi_{\lambda}\) characterized by \(\mathrm{d}\pi_{\lambda}(R_x) = \lambda\mathrm{Id}\), where \(R \in \mathfrak{X}(M)\) is the Reeb field. 

This description of the space of representations means that, using the homogeneity of the principal symbols, a principal symbol \(\sigma \in \Sigma^k_H(M)\) can be understood by two families of operators \(\sigma_+ = \sigma(\cdot,+1), \sigma_-= \sigma(\cdot,-1)\) on the fibers of \(\mathcal{F}^+(H^{1,0})\) and a smooth function \(\sigma_0 =\sigma(\cdot,0) \in C^{\infty}(\mathbb{S}^*H)\). It turns out that the equatorial part \(\sigma_0\) can be recovered from both \(\sigma_{\pm}\) so we may omit it. We can see \(\sigma_{\pm}\) as restrictions of the principal symbol to the upper and lower hemispheres of the space of representations \(\widehat{T_HM}\) and \(\sigma_0\) as an equatorial part, which glues both \(\sigma_{\pm}\). However not every pair \(\sigma_+,\sigma_-\) corresponds to an element of \(\Sigma_H^k(M)\). Even if both \(\sigma_{\pm}\) induce the same \(\sigma_0\) they might not glue in a smooth way, see \cite{GorokhovskyvanErp2022}.

Operators of order \(0\) extend to bounded operators on \(L^2(M)\) and operators of negative order to compact operators. We denote by \(\Psi_H(M)\) the closure of \(\Psi^0_H(M)\) inside the space of bounded operators. We then get from \ref{Equation: Pseudos Exact sequence} the extension of \(C^*\)-algebras:

\begin{equation}\label{Equation: Pseudos Exact Sequence Completed}
    \xymatrix{0 \ar[r] & \mathcal{K}(L^2(M)) \ar[r] & \Psi_H(M) \ar[r] & \Sigma_H(M) \ar[r] & 0.}
\end{equation}

The \(C^*\)-algebra \(\Sigma_H(M)\) is obtained by completion of \(\Sigma_H(M)\). It can be described by a pair of families of Weyl operators on \(H^{1,0}\) and \(H^{0,1}\) whose principal symbols coincide (they correspond to \(\sigma_0\)), see \cite{GorokhovskyvanErp2022}.

\subsection{Families of Heisenberg operators on contact fibrations}

We now explain the case of families. Consider a contact fibration \(\pi \colon X \to B\) with distribution \(\mathcal{H}\) and write \(X_b = \pi^{-1}(\{b\}), b \in B\). We fix \(\Theta \in \Omega^1(M)\) such that \(\ker(\Theta)\cap \mathcal{V} = \mathcal{H}_{\mathcal{V}}\) is the bundle of contact distributions on the fibers.

The vertical vector fields \(\Gamma^\infty(\mathcal{V})\) are stable under Lie bracket, therefore \(\mathcal{V}\) is a Lie algebroid. This Lie algebroid is filtered by \(\mathcal{H}_{\mathcal{V}} \subset \mathcal{V}\). The associated graded vector bundle \(\mathcal{V}_{\mathcal{H}} = \mathcal{H}_{\mathcal{V}} \oplus \faktor{\mathcal{V}}{\mathcal{H}_{\mathcal{V}}}\) is then a bundle of nilpotent Lie algebras (the same way \(\mathfrak{t}_HM\) was). We will also denote the Lie group bundle the same way. If \(\iota_b \colon X_b \to X\) is the inclusion of a fiber, then we have the isomorphism:
\[\iota_b^*\mathcal{V}_{\mathcal{H}} \cong T_{H_b}X_b.\]

We denote by \(\Psi^k_H(X|B)\) the families of operators of order \(k\) on the fibers. We can see these families in two different ways. The first is more topological. If we denote by \((M,H)\) the typical contact manifold fiber of \(X \to B\), we can associate to \(X \to B\) a principal \(\Cont(M,H)_+\)-bundle \(P \to B\) as in Remark \ref{Remark: Contact fibration cocycles}. The group of contact transformations acts by conjugation on the spaces \(\Psi^k_H(M)\), preserving the composition and order of operators. We can then consider the associated (infinite dimensional, Fréchet) vector bundles \(P \times_{\Cont(M,H)_+}\Psi^k_H(M) \to B\). The space \(\Psi^k_H(X|B)\) is then defined as the space of smooth sections of that vector bundle.

The second way to define these operators is to see them directly as distributions on the groupoid \(X\times_B X = \sqcup_{b\in B} X_b\times X_b \rightrightarrows X \). The algebroid of this groupoid is \(\mathcal{V}\), which is filtered by \(\mathcal{H}_{\mathcal{V}}\subset \mathcal{V}\). The space \(\Psi^k_H(X|B)\) then corresponds to distributions on \(X \times_B X\), transverse to the range map, and that extends to the adiabatic deformation between \(X\times_B X\) and \(\mathcal{V}_{\mathcal{H}}\) see \cite{vanErpYuncken2019,Mohsen2021Deformation}.

In both cases, operators of order \(0\) act as bounded operators on the Hilbert bundle \(L^2(X|B) :=(L^2(X_b))_{b\in B}\). Moreover, operators of negative order act as compact operators of this bundle and thus define elements of \(C^*(X\times_BX) \cong \mathcal{K}(L^2(X|B)) \cong C(B,\mathcal{K}(L^2(M)))\). For the last identification, see e.g. \cite{DixmierDouady1963}. We also have the exact sequences similar to \ref{Equation: Pseudos Exact sequence}:

\begin{equation}\label{Equation: Pseudos Exact Sequence Families}
    \xymatrix{0 \ar[r] & \Psi^{k-1}_H(X|B) \ar[r] & \Psi_H^k(X|B) \ar[r] & \Sigma^k_H(X|B) \ar[r] & 0.}
\end{equation}

The space \(\Sigma^k_H(X|B)\) can either be seen as homogeneous distributions of order \(k\) on  the bundle of Heisenberg groups \(\mathcal{V}_{\mathcal{H}}\), or as sections of the associated vector bundle \(P\times_{\Cont(M,H)_+}\Sigma^k_H(M)\).
For \(k = 0\) we can, as for \ref{Equation: Pseudos Exact Sequence Completed} with a single contact manifold, take the respective \(C^*\)-algebraic closures and get:

\begin{equation}\label{Equation: Pseudos Exact Sequence Families Completed}
    \xymatrix{0 \ar[r] & C(B,\mathcal{K}(L^2(M))) \ar[r] & \Psi_H(X|B) \ar[r] & \Sigma_H(X|B) \ar[r] & 0.}
\end{equation}

\section{Szegö Projections and Toeplitz Families}\label{Section: Szego and Toeplitz families}

\subsection{Families of Szegö projections}

In \cite{EpsteinMelrose1998}, Epstein and Melrose generalize the definition of Szegö projections using the Heisenberg calculus. This coincides with the prior generalization made by Guillemin and Boutet de Monvel in \cite{BoutetdeMonvelGuillemin1981} using Fourier integral operators but simplifies their understanding. Regardless of these differences, they generalize the Szegö projector onto the Hardy space of the disk (which acts on \(L^2(\mathbb{S}^1)\) by cutting out the negative Fourier modes).

Let \((M,\theta)\) be a (co-oriented) contact manifold, \(H = \ker(\theta)\). Let \(J \in \mathrm{End}(H)\) be a complex structure compatible with the symplectic form \(\omega = \mathrm{d}\theta\). This means that \(\omega(J\cdot,\cdot) > 0\) and \(\omega(J\cdot,J\cdot) = \omega(\cdot,\cdot)\). In coordinates \((\xi,\eta) \in \mathfrak{t}_HM^*\), we define:
\[s_0(\xi,\eta) = \exp(-\omega(J\xi,\xi)/\eta), \eta >0.\]
This function corresponds to a principal symbol of order \(2\), \(s_0 \in \Sigma^0_H(M)\) (notice the homogeneity). Its equatorial part, as well as its negative part, both vanish (it is called a Hermite symbol). If we fix \(x \in M\) and consider the representation \(\Sigma_H^0(M) \to \mathcal{B}(\mathcal{F}^+(H^{1,0})_x)\) given by \(\sigma \mapsto \sigma_+(x)\), the image of \(s_0\) is given by the orthogonal projection onto the one dimensional subspace \(\mathrm{Sym}^0(H^{1,0})_x\).

\begin{defn}
    A ground state projection is a principal symbol \(\sigma \in \Sigma^0_H(M)\) of the previous form for an appropriate choice of compatible complex structure \(J\).
\end{defn}

\begin{rem}
    Since the space of compatible complex structure is path connected, we can always find a path between two ground states projections within this subset of \(\Sigma^0_H(M)\). Also, notice that we need to make a coherent choice of positive representation, hence the co-orientation requirement.
\end{rem}

\begin{rem}
    In \cite{EpsteinMelrose1998}, the authors also consider "level k" and "up to level k" projections of the previous kind, by taking symbols which project onto \(\mathrm{Sym}^k(H^{1,0})\) and \(\oplus_{j \leq k}\mathrm{Sym}^j(H^{1,0})\) respectively (again, for an appropriate choice of compatible complex structure on \(H\)). The terminology comes from the fact that the spaces \(\mathrm{Sym}^j(H^{1,0})\) correspond to eigenspaces for a quantum harmonic oscillator. The number \(j\) then corresponds to the energy level of the states of that eigenspace. The space \(\mathrm{Sym}^0(H^{1,0})\) is then the space of vectors of lowest energies, the ground state or vacuum state (it is only of dimension 1 since the representation is irreducible).
\end{rem}

\begin{defn}
    A Szegö projector is an element \(S \in \Psi^0_H(M)\) which is a projection, i.e. \(S^2 = S\), and such that its principal symbol is a ground state projection. We denote the set of such projections by \(\Sz(M)\subset \Psi^0_H(M)\) (this set does depend on the contact structure).
    In the presence of a vector bundle \(E\to M\), a Szegö projection will be a projection \(S \in \Psi^0_H(M;E)\) with principal symbol of the form \(s_0\otimes\mathrm{Id}_E\) for a ground state projection \(s_0\). We denote by \(\Sz(M;E)\) the space of such Szegö projections.
\end{defn}

\begin{rem}
    Again, one can consider "level k" and "up to level k" Szegö projections by modifying the requirement on the principal symbol. We will only consider Szegö projections as in the previous definition. Although the index theorem of Section \ref{Section: Family index} also works for these operators, the star-product of Section \ref{Section: Star products} only uses the first kind of Szegö projections.
\end{rem}

We now generalize the notion of Szegö projections to get families of them. This can be done two different ways. Let \(\varphi \in \Cont(M,H)_+\), since it preserves the co-orientation, it preserves the (conformal) symplectic structure on \(H\). Therefore its action on \(\Psi^0_H(M)\) by conjugation preserves the subspace \(\Sz(M)\) of Szegö projectors. Consequently, if \(X \to B\) is a contact fibration with fiber \((M,H)\), and \(P \to B\) denotes the corresponding \(\Cont(M,H)_+\)-principal bundle we get an associated fiber bundle \(P \times_{\Cont(M,H)_+} \Sz(M,H) \to B\). 

\begin{defn}
    A family of Szegö projections is a global section of the fiber bundle \(P \times_{\Cont(M,H)_+} \Sz(M,H) \to B\). Equivalently, it is an element \(S \in \Psi^0_H(X|B)\) such that its restriction on each fiber \(S_b \in \Psi_H^0(X_b), b\in B\) is a Szegö projection. We denote by \(\Sz(X|B)\) the space of families of Szegö projections.
    Likewise, we denote by \(\Sz(X|B;E)\) families of Szegö projections acting on a vector bundle \(E \to X\), defined analogously as in the non-family case.    
\end{defn}

Remember that if we have a contact fibration \(X \to B\) with form \(\Theta\), and fix a complex structure \(J \colon \mathcal{H}_{\mathcal{V}} \to \mathcal{H}_{\mathcal{V}}\) compatible with the symplectic forms \(\mathrm{d}\Theta\), we obtain a bundle of symmetric Fock spaces \(\mathcal{F}^+(\mathcal{H}_{\mathcal{V}}^{1,0}) \to X\). This bundle restricts over each fiber \(X_b\) to the symmetric Fock space of the contact manifold \((X_b,\theta_b)\) with compatible complex structure \(J_{|X_b}\). In particular if \(S \in \Sz(X|B)\) is a family of Szegö projections, its principal symbol \(\sigma^0(S) \in \Sigma^0_H(X|B)\) gives for each \(x \in X\) a ground state projection on the fiber \(\mathcal{F}^+(\mathcal{H}_{\mathcal{V}}^{1,0})_x\).

\begin{thm}
    Let \(X \to B\) be a contact fibration. The space \(\Sz(X|B)\) is non-empty. More precisely, given a field of ground state projections \(s_0 \in \Sigma^0_H(X|B)\), there exists a family of Szegö projections \(S \in \Sz(X|B)\) such that \(\sigma^0(S) = s_0\).

    Finally, if \(G\) is a compact group (e.g. \(\mathbb{S}^1\)) acting on the fiber \((M,H)\) such that the cocycles of \(X \to B\) are \(G\)-equivariant, then there exists a family of Szegö projections which is \(G\)-invariant.
\end{thm}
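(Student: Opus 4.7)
The plan is to adapt the single contact manifold construction of Epstein--Melrose to the parametrized setting by carrying out every step smoothly in $b \in B$. To begin, a field of ground state projections exists because the space of almost complex structures on the symplectic vector bundle $(\mathcal{H}_\mathcal{V},\mathrm{d}\Theta|_{\mathcal{H}_\mathcal{V}})$ compatible with the fiberwise symplectic form is contractible; hence the associated fiber bundle of compatible complex structures admits a global section $J$, which produces $s_0 \in \Sigma^0_H(X|B)$ via the formula recalled before the definition of ground state projections. This reduces the first claim (non-emptiness of $\Sz(X|B)$) to the second.

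Given $s_0$, I would first lift it to a self-adjoint operator $S_0 \in \Psi^0_H(X|B)$ using surjectivity of the principal symbol map in \ref{Equation: Pseudos Exact Sequence Families}: pick any preimage $T_0$, and set $S_0 = \tfrac{1}{2}(T_0+T_0^*)$. Since $s_0$ is itself self-adjoint, the principal symbol is unchanged. Then $S_0^2 - S_0 \in \Psi^{-1}_H(X|B)$ because its principal symbol is $s_0^2 - s_0 = 0$. To produce a genuine projection I would apply holomorphic functional calculus with a contour $\gamma$ in $\mathbb{C}$ enclosing $1$ and bounded away from $0$, for instance $\gamma = \{|z-1|=1/2\}$. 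The essential spectrum of each fiber operator $S_0(b)$ is $\{0,1\}$, as determined by the symbol $s_0$, so any stray eigenvalues of $S_0(b)$ inside $\gamma$ are finite rank and can be absorbed by a preliminary smoothing perturbation. The Riesz projector
\[
S := \frac{1}{2\pi \mathrm{i}}\oint_\gamma (z\,\mathrm{Id}-S_0)^{-1}\,dz
\]
is then idempotent with principal symbol equal to the Riesz projector of $s_0$ associated to $\{1\}$, which is $s_0$ itself, so $S \in \Sz(X|B)$. What keeps $S$ inside $\Psi^0_H(X|B)$ rather than merely its $C^*$-closure is that the principal symbol $z-s_0$ is invertible on $\gamma$, so the resolvent $(z\,\mathrm{Id}-S_0)^{-1}$ admits a Heisenberg parametrix of order $0$ in families, depending smoothly on $(z,b)$.

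For the $G$-equivariant statement I would perform each step $G$-equivariantly. First, produce a $G$-invariant compatible complex structure on $\mathcal{H}_\mathcal{V}$ by averaging an arbitrary one over $G$ using Haar measure and projecting back onto the space of compatible complex structures via the standard polar retraction; this yields a $G$-invariant ground state symbol $s_0$. Next, given any lift $S_0$, replace it by its $G$-average, which remains self-adjoint and has the same principal symbol $s_0$. Finally, the Riesz projector $S$ defined above automatically inherits $G$-invariance, since the contour integral defining it commutes with the $G$-action on $\Psi^0_H(X|B)$.

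The main obstacle is controlling the resolvent construction so that $S$ genuinely lies in $\Psi^0_H(X|B)$ and depends smoothly on $b$, rather than only at the level of the $C^*$-algebraic closure. This requires a parametrix for $z\,\mathrm{Id}-S_0$ in the Heisenberg calculus that is smooth jointly in $z \in \gamma$ and $b \in B$, together with the smoothing correction that removes stray finite-rank eigenvalues inside $\gamma$. In the single contact manifold case this is handled by Epstein--Melrose; the family version follows from the naturality in $b$ of the exact sequence \ref{Equation: Pseudos Exact Sequence Families} and of the parametrix construction, essentially reducing the family case to a fiberwise application of the classical theory, glued smoothly using the contact-fibration structure.
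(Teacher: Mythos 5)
There is a genuine gap, and it sits exactly at the point you defer to a ``preliminary smoothing perturbation.'' Your Riesz-projector construction needs the fixed contour $\gamma=\{|z-1|=1/2\}$ (equivalently, the fixed spectral cut at $1/2$) to avoid $\mathrm{Sp}(S_0(b))$ for \emph{every} $b\in B$ simultaneously. Since $S_0(b)$ is self-adjoint with $S_0(b)^2-S_0(b)$ compact, its spectrum is $\{0,1\}$ together with discrete eigenvalues that accumulate only at $0$ and $1$ --- but nothing keeps these eigenvalues uniformly away from $1/2$ as $b$ varies over the compact base. Eigenvalues move continuously with $b$ and will in general cross the cut, so for such $b$ the integrand $(z\,\mathrm{Id}-S_0(b))^{-1}$ does not exist on all of $\gamma$, and near a crossing the putative family of Riesz projectors jumps rank and is discontinuous in $b$. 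Note also that eigenvalues strictly \emph{inside} $\gamma$ are harmless (they change the projector by a finite-rank, hence smoothing, term); the only real obstruction is spectrum on, or crossing, the contour. Removing it by a perturbation $R(b)$ that is smoothing, self-adjoint, continuous in $b$, and pushes the spectrum off $1/2$ for all $b$ at once is precisely the nontrivial global statement: it is not a fiberwise problem that can be ``glued smoothly using the contact-fibration structure,'' and your proposal neither constructs $R$ nor gives an argument that it exists.

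This is where the paper's proof does genuinely different work. It gives up on a single spectral cut: by compactness of $B$ it chooses a finite cover $B=\cup_j U_j$ and \emph{distinct} levels $0<a_1<\cdots<a_n<1$ with $a_j\notin\mathrm{Sp}(P_b)$ for $b\in U_j$, forms the local spectral projections $P^{(j)}=\chi_{[a_j;+\infty)}(P|_{U_j})\in\Psi^0_H(X|U_j)$, and then glues: using Kuiper-type triviality of the Hilbert bundle $L^2(X|B)$ and the fact that $\ker(P^{(j)})$ and $\mathrm{im}(P^{(j)})$ have infinite-dimensional fibers, it trivializes and extends these subbundles over $B$ and takes $S$ to be the projection onto the extended image bundle. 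On each $U_j$ the difference $S-P^{(j)}$ is a finite sum of differences $P^{(i_\ell)}-P^{(i_{\ell+1})}$, each a finite-rank projection, hence smoothing; this is what keeps $S$ in $\Psi^0_H(X|B)$ with $\sigma^0(S)=s_0$. Your local ingredients (self-adjoint lift of $s_0$, fiberwise functional calculus, spectral invariance of the calculus, and the $G$-averaging for the equivariant statement) all match the paper, but the passage from local spectral projections to a single smooth family of projections is the heart of the theorem and is missing from your argument.
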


\begin{proof}
    Fix a family of ground state projections \(s_0\). These exist because we can take a metric on \(\mathcal{H}_{\mathcal{V}}\) and consider the corresponding compatible complex structure. Consider then any operator \(P \in \Psi^0_H(X|B)\) with \(\sigma^0(P) = s_0\). Without loss of generality, we may assume that \(P^* = P\) as we can replace it by \(\frac{1}{2}(P+P^*)\).

    Notice that \(P^2 - P \in \Psi^{-1}_H(X|B) \subset \mathcal{K}(L^2(X|B))\). Therefore, if \(b \in B\), we may find a neighborhood \(U \subset B\) and a number \(a \in (0;1)\) such that:
    \[\forall b' \in U, a \notin \mathrm{Sp}(P_b).\]
    The space \(B\) is compact so we may find a finite open cover \(B = \cup_{j = 1}^n U_j\) and numbers \(0< a_1 < \cdots < a_n < 1\) such that:
    \[\forall 1\leq j \leq n, \forall b \in U_j, a_j \notin \mathrm{Sp}(P_b).\]
    On \(U_j\) we may then consider the operator:
    \[P^{(j)} = \chi_{[a_j;+\infty)}(P_{|U_j}) = \left(\chi_{[a_j;+\infty)}(P_{b})\right)_{b\in U_j} \in \Psi^0_H(X|U_j),\]
    obtained through holomorphic functional calculus (the spectrum of each \(P_b\) is discrete away from \(0\) and \(1\)).

    The Hilbert bundle \(L^2(X|B)\) is infinite dimensional so it may be trivialized as \(B \times \mathcal{L}^2\). Over each \(U_j\), the operator \(P^{(j)}\) is the projection onto \(\mathrm{im}(P^{(j)})\) parallel to \(\ker(P^{(j)})\). Both of these subbundles have infinite dimensional fibers, otherwise \(P\) or \(1-P\) would be compact, which would give \(\sigma^0(P) \in \{0;\mathrm{Id}\}\) which is a contradiction. Therefore we may trivialize them both:
    \begin{align*}
        \ker(P^{(j)}) &= U_j \times K_j \\
        \mathrm{im}(P^{(j)}) &= U_j \times I_j.
    \end{align*}
    We can then extend these bundles trivially over \(B\):
    \begin{align*}
        \mathcal{K}_j &:= B \times K_j \subset B \times \mathcal{L}^2 = L^2(X|B) \\
        \mathcal{I}_j &:= B \times I_j \subset B \times \mathcal{L}^2 = L^2(X|B).
    \end{align*}
    For every \(1\leq j \leq n\) we have the relation \(\mathcal{L}^2 = \mathcal{K}_j \oplus \mathcal{I}_j\).
    If \(U_i \cap U_j \neq \emptyset\) with \(i<j\) then, since \(a_i < a_j\), we have \(P^{(j)} > P^{(i)}\) on \(U_i \cap U_j\). Thus \(K_j \subset K_i\) and \(I_j \supset I_i\). Therefore \(\mathcal{K}_j \subset \mathcal{K}_i\) and \(\mathcal{I}_j \supset \mathcal{I}_i\). Let \(S\) be the projector onto \(\mathcal{I}_n\) parallel to \(\mathcal{K}_n\). We have \(S_{|U_n} = P^{(n)}\) on \(U_n\). We may assume that \(B\) is connected (otherwise work component-wise). On \(U_j\) we may find \(j =i_1, \cdots,i_k = n\) such that \(U_{i_{\ell}}\cap U_{i_{\ell+1}}\neq \emptyset\) for every \(1\leq \ell < k\). Then on \(U_j\) we have:
    \[S - P^{(j)} = \sum_{\ell = 1}^{k-1} P^{(i_{\ell})} - P^{(i_{\ell+1})}.\]
    Each \(P^{(i_{\ell})} - P^{(i_{\ell+1})}\) is then a finite rank projection hence a smoothing operator. Since \(S\) differs on each \(U_j\) from an operator in \(\Psi_H^0(X|U_j)\) by a sum of smoothing operators then we have \(S \in \Psi^0_H(X|B)\). Now notice that \(\sigma^0(S) = s_0\) thus \(S\) is a continuous family of Szegö projectors.

    If a compact group acts as in the Theorem, we may, before constructing each \(P^{(j)}\) average \(P\) by the group action to get an equivariant operator and everything that follows becomes \(G\)-equivariant.   
\end{proof}

\subsection{Families of Toeplitz operators}

\begin{defn}
    Let \(X\to B\) be a contact fibration and fix a family of Szegö projections \(S\in \Sz(X|B)\). A Toeplitz family of order \(k\in \mathbb{Z}\) is an operator \(P \in S \Psi^k_H(X|B) S\). We denote by \(\mathcal{T}^k(X|B)\) the space of such operators, omitting the reference to \(S\). If a vector bundle \(E \to X\) is involved,  we will denote the Toeplitz operators acting on the sections of \(E\) by \(\mathcal{T}^k(X|B;E)\).
\end{defn}

\begin{rem}\label{Remark: Toeplitz families restriction}
    With the same notations, the restriction on a fiber \(X_b, b\in B\) gives the Toeplitz operators \(\mathcal{T}(X_b)\) for the Szegö projection \(S_b \in \Psi^0_H(X_b)\), as considered in e.g. \cite{EpsteinMelrose1998}.
\end{rem}

We now fix a contact fibration \(X \to B\) and a family of Szegö projections \(S \in \Sz(X|B)\) with principal symbol \(s_0\). If \(P \in \mathcal{T}^k(X|B)\) we have \(P = SP = PS\) because \(S^2 =S\). Then we have \(\sigma^k(P)s_0 = s_0\sigma^k(P)\). Therefore, \(\sigma^k(P)\) can only be non-zero in the positive representation and in this representation, it commutes with the projector \(s_0\) which has one dimensional range. Therefore \(\sigma^0(P)\) reduces to a smooth function on \(f\in C^{\infty}(X)\) through the identification:
\[\forall x \in X, \sigma_+^k(P)(x) = s_0(x) f(x) s_0(x).\]
Another characterization of this function, for operators \(P \in \mathcal{T}^0(X|B)\), is the following. With the same notations, the function \(f\) is the unique function such that if we write \(T_f:= S\mathcal{M}_fS\), where \(M_f \in \Psi^0_H(M)\) is the operator of multiplication by \(f\) then:
\[P - T_f \in \mathcal{T}^{-1}(X|B).\]

From this result and the exact sequence \ref{Equation: Pseudos Exact Sequence Families} we get the exact sequence:

\begin{equation}\label{Equation: Exact sequence Toeplitz families}
    \xymatrix{0 \ar[r] & \mathcal{T}^{k-1}(X|B) \ar[r] & \mathcal{T}^k(X|B) \ar[r] & C^{\infty}(X) \ar[r] & 0.}
\end{equation}

\begin{thm}\label{Theorem: Asymptotic completeness}
    Let \(P_k \in \mathcal{T}^{m_k}(X|B), k \geq 0\) be a sequence of operators with the order \(m_k, k\geq 0\) decreasing to \(-\infty\). Then there exists \(P \in \mathcal{T}^{m_0}(X|B)\) such that:
    \[P \sim \sum_{k = 0}^{+\infty}P_k,\]
    i.e. 
    \[\forall k \geq 0, P - \sum_{j = 0}^k P_j \in \mathcal{T}^{m_{k+1}}(X|B).\]
\end{thm}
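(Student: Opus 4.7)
The plan is to deduce the asymptotic completeness for Toeplitz families from the analogous Borel-type summation in the ambient family Heisenberg calculus $\Psi_H^{\bullet}(X|B)$, and then sandwich the resulting operator by the Szeg\"o projection. The key algebraic observation is that every $P_k \in \mathcal{T}^{m_k}(X|B)$ satisfies $P_k = S P_k S$ by definition of a Toeplitz family, and $S^2 = S$, so the operation $Q \mapsto SQS$ fixes finite sums of the $P_j$'s and maps $\Psi_H^m(X|B)$ into $\mathcal{T}^m(X|B)$.

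First I would establish the analogous asymptotic completeness in $\Psi_H^{\bullet}(X|B)$: given $Q_k \in \Psi_H^{m_k}(X|B)$ with $m_k \searrow -\infty$, there exists $Q \in \Psi_H^{m_0}(X|B)$ with $Q - \sum_{j=0}^k Q_j \in \Psi_H^{m_{k+1}}(X|B)$ for all $k \geq 0$. On a single contact manifold this is the standard asymptotic summation in the Heisenberg calculus, proved in \cite{EpsteinMelrose1998,vanErpYuncken2019} by damping the homogeneous principal symbols on the osculating group with smooth cutoffs of the dilation parameter. For the family version I would cover the compact base $B$ by finitely many trivializing open sets $\{U_\alpha\}$, on each of which the contact fibration reduces to a $U_\alpha$-parametrized family of Heisenberg operators on the model fiber $(M,H)$; the single-manifold Borel construction carries over with smooth dependence on the parameter $b \in U_\alpha$ to yield families $Q^{(\alpha)}$, which one patches by $Q := \sum_\alpha (\chi_\alpha \circ \pi)\, Q^{(\alpha)}$ for a subordinate partition of unity $\{\chi_\alpha\}$. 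The patched operator lies in $\Psi_H^{m_0}(X|B)$ because $\Psi_H^{\bullet}(X|B)$ is a module over pullbacks from $C^{\infty}(B)$, and on overlaps the various $Q^{(\alpha)}$ differ only by lower order contributions, so the asymptotic relation $Q - \sum_{j=0}^k Q_j \in \Psi_H^{m_{k+1}}(X|B)$ survives the patching.

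Given such a $Q$ built from the sequence $Q_k := P_k$, I then set $P := SQS \in \mathcal{T}^{m_0}(X|B)$. Using $P_j = SP_jS$ for each $j$ and the linearity of conjugation by $S$,
\[
    P - \sum_{j=0}^k P_j \;=\; S\Bigl(Q - \sum_{j=0}^k P_j\Bigr) S \;\in\; S\,\Psi_H^{m_{k+1}}(X|B)\,S \;=\; \mathcal{T}^{m_{k+1}}(X|B),
\]
which is exactly the required Toeplitz asymptotic $P \sim \sum_k P_k$.

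The main obstacle is the family version of the Heisenberg-calculus Borel summation, namely ensuring that the pointwise-in-$b$ construction glues into a globally smooth family over $B$. This is settled by the partition-of-unity patching together with the observation that multiplication by $\pi^*C^{\infty}(B)$ preserves orders in $\Psi_H^{\bullet}(X|B)$, and that the compactness of $B$ allows reduction to a finite trivializing cover. Once the Heisenberg statement is in place, the compression by $S$ at the end is a purely algebraic step with no additional analytic content.
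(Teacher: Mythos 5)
Your proposal is correct and follows essentially the same route as the paper: take a Borel-type asymptotic sum $Q$ of the $P_k$ in the ambient family Heisenberg calculus $\Psi^{\bullet}_H(X|B)$ and compress, setting $P = SQS$, using $SP_kS = P_k$. The only difference is that the paper simply invokes asymptotic completeness of $\Psi^{\bullet}_H(X|B)$ from \cite[Theorem 59]{vanErpYuncken2019} (the groupoid calculus already covers the fibered case), whereas you re-derive it by a partition-of-unity patching over the base, which is fine but not needed.
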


\begin{proof}
    Let \(S\in\Sz(X|B)\) be the underlying family of Szegö projectors. We have \(\mathcal{T}^m(X|B) \subset \Psi^m_H(X|B)\). The calculus \(\Psi^{\bullet}_H(X|B)\) is asymptotically complete (see \cite[Theorem 59]{vanErpYuncken2019}), thus we may find \(Q \in \Psi^{m_0}_H(X|B)\) such that:
    \[Q \sim \sum_{k = 0}^{+\infty}P_k. \]
    Now set \(P = SQS\). We have \(P \in \mathcal{T}^{m_0}(X|B)\) and since for every \(k \geq 0\) we have \(SP_kS = P_k\) then:
    \[P \sim  \sum_{k = 0}^{+\infty}P_k.\]
\end{proof}

Since \(\mathcal{T}^k(X|B) \subset \Psi^k_H(X|B)\), Toeplitz families of order \(0\) are bounded on the Hilbert bundle \(L^2(X|B)\) and Toeplitz families of negative order are compact. The norm closure of \(\mathcal{T}^{-1}(X|B)\) is the corner subalgebra \(S \mathcal{K}(L^2(X|B))S\). Since on each fiber \(\mathrm{im}(S_b)\) is infinite dimensional, the space \(S_b\mathcal{K}(L^2(X_b))S_b\) is isomorphic to \(\mathcal{K}(\mathrm{im}(S_b))\) and hence isomorphic to \(\mathcal{K}\). Therefore the algebra \(S \mathcal{K}(L^2(X|B))S\) is the algebra of compact operators on the Hilbert bundle \(\mathrm{im}(S) = (\mathrm{im}(S_b))_{b\in B}\to B\). Consequently we have isomorphisms:
\[S \mathcal{K}(L^2(X|B))S \cong \mathcal{K}(\mathrm{im}(S)) \cong C(B,\mathcal{K}).\]

Likewise, the closure of the algebra of order \(0\) operators is \(S\Psi_H(M)S\). We will simply denote it by \(\mathcal{T}(X|B)\). Taking the norm closure from exact sequence \ref{Equation: Exact sequence Toeplitz families} (or compose with \(S\) on both sides of \ref{Equation: Pseudos Exact Sequence Families Completed}) we obtain:

\begin{equation}\label{Equation: Exact sequence Toeplitz families completed}
    \xymatrix{0 \ar[r] & C(B,\mathcal{K}) \ar[r] & \mathcal{T}(X|B) \ar[r] & C(X) \ar[r] & 0.}
\end{equation}

\begin{rem}
    All the algebras above are fibered over the space \(B\). In that regard, the algebra \(C(X)\) (or \(C^{\infty}(X)\)) should be seen as the algebra of (global) sections of the bundle \((C(X_b))_{b\in B}\) through the obvious identification.
\end{rem}

\begin{thm}
    Let \(E \to X\) be a complex hermitian vector bundle. Let \(f \in C(X,\mathrm{End}(E))\) be a continuous section of the endomorphism bundle. The operator \(T_f \in \mathcal{T}(X|B;E)\) is a continuous family of Fredholm operators if and only if for every \(x \in X\), \(f(x)\) is invertible, i.e. \(f \in C(X,\mathrm{GL}(E))\).
\end{thm}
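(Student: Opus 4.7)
The plan is to derive the statement from a bundle-valued version of the exact sequence \ref{Equation: Exact sequence Toeplitz families completed}, combined with the standard correspondence between continuous Fredholm families and elements invertible modulo the compact operators on a continuous field of Hilbert spaces.

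First I would extend the exact sequence \ref{Equation: Exact sequence Toeplitz families completed} to the vector bundle setting, obtaining
\[0 \to C(B,\mathcal{K}) \to \mathcal{T}(X|B;E) \to C(X,\mathrm{End}(E)) \to 0,\]
where the identification of the ideal with $C(B,\mathcal{K})$ rests on the fact, already used in the scalar case, that $\mathrm{im}(S)\otimes E \to B$ is a trivializable infinite-dimensional Hilbert bundle (via \cite{DixmierDouady1963}). By the same symbolic computation as in the scalar case (multiplicativity of the principal symbol on products $S\mathcal{M}_fS$ modulo lower order terms), the image of $T_f$ under this quotient map is exactly $f \in C(X,\mathrm{End}(E))$.

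Second, I would invoke the classical fact that, for a compact Hausdorff base $B$ and a trivializable continuous field of separable Hilbert spaces $\mathcal{H}\to B$, an adjointable operator on $\mathcal{H}$ is a continuous family of Fredholm operators if and only if it is invertible modulo the algebra of compact operators, identified here with $C(B,\mathcal{K})$. Applied to $T_f$ acting on the Hilbert bundle $\mathrm{im}(S)\otimes E \to B$, this reduces the Fredholmness of $T_f$ to the invertibility of its class in $\mathcal{T}(X|B;E)/C(B,\mathcal{K}) \cong C(X,\mathrm{End}(E))$.

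Finally, an element $f \in C(X,\mathrm{End}(E))$ is invertible in this commutative $C^*$-algebra precisely when $f(x) \in \mathrm{GL}(E_x)$ for every $x \in X$, by compactness of $X$ and continuity of matrix inversion on $\mathrm{GL}$. The main step requiring care is the first one, namely establishing that the symbol map on Toeplitz operators of order zero extends to the $C^*$-algebra closure with image exactly $C(X,\mathrm{End}(E))$ and kernel exactly $C(B,\mathcal{K})$; this is a routine adaptation of the scalar argument that relies on the multiplicativity of the principal symbol modulo $\mathcal{T}^{-1}(X|B;E)$ together with the asymptotic completeness of Theorem \ref{Theorem: Asymptotic completeness} to construct a parametrix witnessing the invertibility in the quotient.
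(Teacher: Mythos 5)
Your proposal is correct and follows essentially the same route as the paper: extend the completed Toeplitz exact sequence to coefficients in \(E\), observe that the image of \(T_f\) in the quotient \(\mathcal{T}(X|B;E)/C(B,\mathcal{K})\cong C(X,\mathrm{End}(E))\) is \(f\), and apply the Atkinson-type equivalence between continuous Fredholm families and invertibility modulo \(C(B,\mathcal{K})\), with pointwise invertibility characterizing invertibility in \(C(X,\mathrm{End}(E))\). Your added remarks on the identification of the ideal and on the symbol map extending to the closure only flesh out steps the paper treats as routine.
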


\begin{proof}
    In the presence of a vector bundle, the exact sequence \ref{Equation: Exact sequence Toeplitz families completed} becomes:
    \[\xymatrix{0 \ar[r] & C(B,\mathcal{K}) \ar[r] & \mathcal{T}(X|B;E) \ar[r] & C(X,\mathrm{End}(E)) \ar[r] & 0.}\]
    Now \(T_f\) is a continuous family of Fredholm operators if and only if it is invertible modulo \(C(B,\mathcal{K})\). But the image of \(T_f\) under that quotient is precisely \(f\).
\end{proof}

\begin{rem}
    In the above proof, the algebra \(C(B,\mathcal{K})\) comes in a similar way as in the scalar case. First, the Hilbert bundle is replaced by the one of \(L^2\)-sections of the bundle \(E\), \(L^2(X|B;E) = (L^2(X_b,E_b))_{b\in B}\) with \(E_b = \iota_b^*E\) and \(\iota_b\colon X_b \to X\) the inclusion of the fiber. The algebra \(C(B,\mathcal{K})\) corresponds to the compact operators on the subbundle \(\mathrm{im}(S) \subset L^2(X|B;E)\).
\end{rem}

We will solve the index problem for this type of operators in Section \ref{Section: Family index} by identifying its index bundle with the index bundle of a particular Dirac operator.

\section{Applications to Star-products and Deformation Quantization}\label{Section: Star products}

\begin{lem}\label{Lemma: Symplectic fibration is Poisson}
    Let \(S\overset{p}\to B\) be a symplectic fibration with connected fibers. There is a natural Poisson structure \(\Pi\) on the total space \(S\) such that the symplectic leaves of \((S,\Pi)\) are the fibers of the fibration.
\end{lem}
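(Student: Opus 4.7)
The plan is to construct $\Pi$ as the bivector whose associated bracket is the fiberwise Poisson bracket dual to the symplectic forms $\omega_b$, and then to verify the Jacobi identity and identify the leaves by reducing to the local symplectic model provided by the $\mathrm{Sympl}(M,\omega)$-valued cocycles of a symplectic fibration.

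For the construction I would proceed intrinsically. The vertical bundle $\mathcal{V}:=\ker(\mathrm{d}p)\subset TS$ carries a fiberwise symplectic form $\omega^{\mathcal{V}}\in \Gamma(\wedge^{2}\mathcal{V}^{*})$ obtained from the restrictions $\omega_{b}$. Fiberwise nondegeneracy gives a bundle isomorphism $\omega^{\flat}\colon \mathcal{V}\to \mathcal{V}^{*}$ whose inverse I denote $\pi^{\sharp}\colon \mathcal{V}^{*}\to\mathcal{V}$. Composing with the restriction $T^{*}S\twoheadrightarrow \mathcal{V}^{*}$ and the inclusion $\mathcal{V}\hookrightarrow TS$ produces a skew-symmetric bundle map $\Pi^{\sharp}\colon T^{*}S\to TS$, hence a bivector $\Pi\in \Gamma(\wedge^{2}TS)$. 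Equivalently, for $f,g\in C^{\infty}(S)$ and $x\in S_{b}$, one sets $\{f,g\}_{\Pi}(x):=\{f|_{S_{b}},g|_{S_{b}}\}_{\omega_{b}}(x)$, which is manifestly smooth in $x$ and bilinear and skew in $(f,g)$.

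The main step is to check that $[\Pi,\Pi]=0$ in the Schouten--Nijenhuis bracket. I would verify this in a local trivialization: by the cocycle description of symplectic fibrations recalled earlier, there is a cover of $B$ by opens $U$ with trivializations $\phi_{U}\colon p^{-1}(U)\xrightarrow{\sim} U\times M$ under which the fiberwise form $\omega^{\mathcal{V}}$ corresponds on each slice $\{b\}\times M$ to the fixed symplectic form $\omega$ on $M$. In such coordinates $\Pi$ pulls back to the bivector $\mathrm{pr}_{M}^{*}\pi_{M}$, where $\pi_{M}$ is the Poisson bivector of $(M,\omega)$. Its Schouten bracket is $\mathrm{pr}_{M}^{*}[\pi_{M},\pi_{M}]$, which vanishes since $(M,\omega)$ is symplectic; vanishing being local, $[\Pi,\Pi]=0$ on all of $S$. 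Consistency on overlaps is automatic because the transition maps lie in $\mathrm{Sympl}(M,\omega)$ and hence preserve $\pi_{M}$; this is the part requiring the most care and is the reason we need the cocycle refinement of Remark 2.6 rather than just a bare fiber bundle structure.

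For the final assertion, I would observe that by construction the characteristic distribution $\mathrm{im}(\Pi^{\sharp})$ equals $\mathcal{V}$, since $\pi^{\sharp}$ is a fiberwise isomorphism onto $\mathcal{V}$. This regular distribution integrates to the foliation of $S$ by the fibers of $p$, and since the fibers are connected by hypothesis they are the maximal integral manifolds, hence precisely the symplectic leaves of $(S,\Pi)$. The symplectic form on the leaf $S_{b}$ induced by $\Pi$ is, by the defining relation $\omega^{\flat}\circ \pi^{\sharp}=\mathrm{id}$, exactly the original $\omega_{b}$, as claimed.
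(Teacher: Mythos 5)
Your construction of \(\Pi\) and your identification of the leaves are essentially the paper's: the same composition \(T^*S \twoheadrightarrow \mathcal{V}^* \xrightarrow{\pi^{\sharp}} \mathcal{V} \hookrightarrow TS\), the same surjectivity-of-restriction argument giving \(\mathrm{im}(\Pi^{\sharp}) = \mathcal{V}\), and the same use of connectedness of the fibers. The genuine divergence is in the Jacobi identity. The paper checks it fiberwise: because every Hamiltonian vector field of \(\Pi\) is vertical, one has \(\lbrace f,g\rbrace|_{S_b} = \lbrace f|_{S_b}, g|_{S_b}\rbrace_{\omega_b}\), so the Jacobiator at a point \(x \in S_b\) is the Jacobiator of the symplectic fiber \((S_b,\omega_b)\) applied to the restricted functions, hence zero; this uses nothing beyond the definition of a symplectic fibration as a bundle with a \(2\)-form \(\Omega\) restricting to fiberwise symplectic forms. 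You instead pass to trivializations \(p^{-1}(U)\cong U\times M\) in which \(\Pi\) becomes \(\mathrm{pr}_M^*\pi_M\) and conclude \([\Pi,\Pi]=0\) locally. That argument is valid whenever such \(\mathrm{Sympl}(M,\omega)\)-cocycle trivializations exist, but be aware that this is genuinely more than the \(2\)-form definition under which the lemma is stated provides: with only \(\Omega\) restricting to fiberwise symplectic forms, the forms \(\omega_b\) need not be constant in any trivialization (for instance \(S=(0,\infty)\times\mathbb{S}^2\) with \(\Omega_{(b,x)} = b\,\omega_{\mathbb{S}^2}\), where the fiberwise cohomology class varies), so the equivalence asserted in the remark you cite requires Moser-type hypotheses (e.g. compact fibers and locally constant fiberwise class) and is not automatic.

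So your route proves the lemma for fibrations with symplectomorphism structure group, which is what the quoted remark asserts but is the weak link; the paper's fiberwise computation is more robust because it works directly from the \(2\)-form definition. The fix is cheap: replace the local-triviality step by the observation that the bracket of two functions on \(S\), restricted to a fiber, is the fiber bracket of their restrictions, which reduces \([\Pi,\Pi]=0\) to the symplectic (hence Poisson) identity on each fiber \((S_b,\omega_b)\) — exactly the paper's argument. Your final paragraph on the characteristic distribution and the induced leaf symplectic forms is correct as written.
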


\begin{proof}
    As \(S\overset{p}\to B\) is a symplectic fibration, there is a two form \(\omega\in \Omega^2(S)\), such that \(\omega|_{S_b}=:\omega_b\) is a symplectic structure on \(T(S_b)\). Thus for any \(b\in B\) the symplectic form defines an isomorphism \(\omega_b^\flat:\mathcal{V}_b\to \mathcal{V}_b^*\). We denote its inverse by \(\pi_b^\sharp:=(\omega_b^\flat)^{-1}\) to obtain a nondegenerate Poisson structure \(\pi_b\) on the fiber \(S_b\). Let \((\cdot)^\mathcal{V}\) denote the restriction to vertical tangent space. We define \[\Pi(\alpha\wedge\beta)|_x:=\pi_{p(x)}(\alpha^\mathcal{V}\wedge\beta^\mathcal{V})|_x \quad \mathrm{for} \quad \alpha\wedge \beta\in \bigwedge^2T_xS\] The associated bracket is given by \(\{f,g\}:=\Pi(\mathrm{d}f,\mathrm{d}g)\). To show that this defines indeed a Poisson structure, we need to shot that \(\{\cdot,\cdot\}\) satisfies the Jacobi- identity or equivalently that \([ \Pi,\Pi]=0\in \mathfrak{X}^3(S)\). Since the equations are tensorial, it suffices to check that pointwise, with \(b = p(x)\): \[\{f,\{g,h\}\}(x)=\Pi(\mathrm{d}f,\Pi(\mathrm{d}g,\mathrm{d}h))(x)=\pi_b(\mathrm{d}f^\mathcal{V},\pi_b(\mathrm{d}g^\mathcal{V},\mathrm{d}h^\mathcal{V})^\mathcal{V})(x)\]
    The latter is the restriction of the non-degenerate Poisson structure on the fiber \(S_b\) and therefore the Jacobiator vanishes.
    The symplectic foliation \(\mathcal{F}_\Pi\) of \(S\) is given by integral submanifolds of the characteristic distribution \(\mathcal{D}_\Pi|_x=\mathrm{im}(\Pi^\sharp_x:T^*_xS\to T_xS)\). By the construction of \(\Pi\) we immediately get the inclusion \(\mathcal{D}_\Pi|_x\subseteq\mathcal{V}_x\). To see the equality, we choose \(v\in \mathcal{V}_x\) and \(\iota_v\omega_b|_x \in \mathcal{V}_x^*\). Since the restriction \(T^*_xS\to \mathcal{V}^*_x\) is surjective, there exists \(\eta\in T^*_xS\) such that \(\eta|_{\mathcal{V}_x}=\iota_v\omega_b|_x\). By the definition of \(\Pi\) we get \[\Pi^\sharp(\eta)|_x=\pi_b^\sharp(\eta|_{\mathcal{V}_x})=\pi^\sharp_b(\iota_v\omega_b)=v\]

    Hence every \(v\in \mathcal{V}_x\) is in the image of \(\Pi^\sharp\) and \(\mathcal{D}_\Pi=\mathcal{V}\) as vector bundles. In particular every Hamiltonian vector field is tangent to the fibers. Since \(S\overset{p}\to B\) is a fibration each fiber \(S_b\) is an embedded submanifold and \(T_xS_b=\ker(\mathrm{d}p|_x)=\mathcal{V}_x\). Since the fibration has connected fibers, the leaves of \(\Pi\) are exactly the fibers of \(p\colon S\to B\).
\end{proof}

Similarly, if \((X\overset{\pi}{\to}B, \Theta)\) is a contact fibration, then it is naturally a Jacobi manifold. Jacobi structures are the analogue for contact manifolds of Poisson structures for symplectic manifolds \cite{Kirillov1976,Lichnerowicz1978Jacobi}. 

\begin{lem}\label{Lemma: Contact fibration is Jacobi}
    Let \(X\overset{\pi}\to B\) be a contact fibration with connected fibers. There is a natural Jacobi structure \(\lbrace\cdot,\cdot\rbrace\) on the total space \(X\) such that the characteristic foliation of \((X,\lbrace\cdot,\cdot\rbrace)\) has leaves which are the fibers of the fibration, in particular they are all contact.
\end{lem}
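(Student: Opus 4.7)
The plan is to adapt the construction of Lemma~\ref{Lemma: Symplectic fibration is Poisson} by replacing the fiberwise symplectic/Poisson structure with the canonical contact/Jacobi one. Recall that every contact manifold $(M,\theta)$ carries a canonical Jacobi pair $(\Lambda_\theta, R_\theta)$ consisting of the Reeb vector field and the bivector obtained by inverting $d\theta$ along $\ker\theta$, with associated bracket
\[\{f,g\}_\theta = \Lambda_\theta(df,dg) + f R_\theta(g) - g R_\theta(f)\]
satisfying the Kirillov--Jacobi axioms $[\Lambda_\theta,\Lambda_\theta] = 2 R_\theta\wedge\Lambda_\theta$ and $[R_\theta,\Lambda_\theta] = 0$.

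Now fix $\Theta\in\Omega^1(X)$ restricting fiberwise to contact forms $\theta_b$ (this exists by the lemma recalled earlier in Section~\ref{Section: Fibrations}), and let $R\in\Gamma^\infty(\mathcal{V})$ be the associated vertical Reeb field, so that $R|_{X_b} = R_{\theta_b}$. I define $E := R$ globally and $\Lambda\in\mathfrak{X}^2(X)$ by the vertical restriction formula
\[\Lambda_x(\alpha,\beta) := (\Lambda_{\theta_{\pi(x)}})_x\bigl(\alpha|_{\mathcal{V}_x},\,\beta|_{\mathcal{V}_x}\bigr), \qquad \alpha,\beta\in T_x^*X.\]
Smoothness of $\Lambda$ follows from the tensorial description $\Lambda^\sharp = \iota\circ(d\Theta|_{\mathcal{H}_\mathcal{V}})^{-1}\circ p$, where $p\colon T^*X\to\mathcal{H}_\mathcal{V}^*$ is the restriction map and $\iota\colon\mathcal{H}_\mathcal{V}\hookrightarrow TX$ the inclusion. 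Setting $\{f,g\} := \Lambda(df,dg) + fE(g) - gE(f)$, one has for every $f,g\in C^\infty(X)$ and $x\in X_b$ the key compatibility $\{f,g\}(x) = \{f|_{X_b}, g|_{X_b}\}_{\theta_b}(x)$, because only vertical cotangent directions and the vertical vector field $E$ enter the defining formula.

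I would then verify the Jacobi identity pointwise, in the spirit of Lemma~\ref{Lemma: Symplectic fibration is Poisson}: iterating the compatibility above, the Jacobiator $\{f,\{g,h\}\} + \{g,\{h,f\}\} + \{h,\{f,g\}\}$ evaluated at $x\in X_b$ reduces to the Jacobiator of the (already known) Jacobi bracket on $(X_b,\theta_b)$ applied to the fiber restrictions, and hence vanishes. This is equivalent to the Schouten identities $[\Lambda,\Lambda] = 2E\wedge\Lambda$ and $[E,\Lambda] = 0$. For the characteristic foliation, its generating distribution is spanned by the Hamiltonian fields $X_f = \Lambda^\sharp(df) + fE$ and by $E$; all of these are vertical by construction, so the distribution sits inside $\mathcal{V}$, while on each fiber the restricted Jacobi structure is the full contact one, whose characteristic distribution is all of $\mathcal{V}|_{X_b}$, forcing equality. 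Connectedness of the fibers then identifies the leaves with the fibers of $\pi$, each contact by hypothesis. The main subtlety I expect is precisely this pointwise reduction of the Jacobiator, which involves second derivatives: the decisive point is that $\{g,h\}$ restricts on $X_b$ to $\{g|_{X_b},h|_{X_b}\}_{\theta_b}$ and $\mathcal{V}_x = T_xX_b$, so vertical differentiation commutes with fiberwise restriction and the reduction to the fiberwise Kirillov--Jacobi identity goes through unobstructed.
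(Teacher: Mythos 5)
Your proposal is correct and follows essentially the same route as the paper: define the pair \((\Lambda,R)\) from the fiberwise symplectic structure \(\mathrm{d}\Theta\) on \(\mathcal{H}_{\mathcal{V}}\) together with the vertical Reeb field, verify the Schouten--Nijenhuis compatibility relations by restricting to each fiber where they reduce to the standard Jacobi structure of a contact manifold, and identify the characteristic foliation with the fibers as in Lemma \ref{Lemma: Symplectic fibration is Poisson}. Your write-up is in fact somewhat more explicit than the paper's (the bracket formula, the tensorial smoothness argument, and the remark on vertical differentiation commuting with fiberwise restriction), but the underlying argument is the same.
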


\begin{proof}
    Recall that the bracket \(\lbrace\cdot,\cdot\rbrace\) of a Jacobi manifold is completely determined by a contravariant 2-tensor \(\Lambda\) and a vector field \(R\) (with some compatibility properties). For the space \(X\), recall that we have a 1-form \(\Theta\) which restricts to each fiber into a contact form. In particular \(\mathrm{d}\Theta\) induces a symplectic structure on the vertical contact distributions. As in Lemma \ref{Lemma: Symplectic fibration is Poisson}, we may use this fiberwise symplectic structure to create a contravariant 2-tensor \(\Lambda\). The vector field \(R\) is the vertical Reeb field. To ensure these objects define a Jacobi structure we must check the relations:
    \[[\Lambda,\Lambda] = 2R\wedge \Lambda, [\Lambda,R] = 0,\]
    where the bracket denotes the Schouten-Nijenhuis bracket. Both these relations can be checked on each fiber \(X_b\) where they reduce to the natural Jacobi structure associated to a contact manifold. The rest of the proof is similar to the previous Lemma \ref{Lemma: Symplectic fibration is Poisson}.
\end{proof}

Another particular case of Jacobi structure is given by Poisson structures. In that case the 2-tensor is the Poisson tensor and the vector field is equal to zero. Consequently, we may also view the total space of a symplectic fibration as a Jacobi manifold.

\begin{thm}\label{Theorem: Contact to Symplectic is Jacobi Map}
    Let \(S\overset{p}\to B\) be a prequantizable symplectic fibration and denote by \(X\overset{\pi}{\to}B\) the associated  contact fibration as in Proposition \ref{Proposition: Prequantization Symplectic fibration}. The map \(X \to S\) is a Jacobi map.
\end{thm}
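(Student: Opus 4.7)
The idea is to unwind the Jacobi bracket on $X$ and the Poisson bracket on $S$, using the prequantization relation $\mathrm{d}\Theta = p^*\Omega$ to identify the fiberwise structures. Recall that for $f,g \in C^{\infty}(X)$ the Jacobi bracket of Lemma \ref{Lemma: Contact fibration is Jacobi} reads
\[\{f,g\}_X \;=\; \Lambda_X(\mathrm{d}f,\mathrm{d}g) \;+\; f\,R(g) \;-\; g\,R(f),\]
where $R$ is the vertical Reeb field, while the Poisson bracket on $S$ of Lemma \ref{Lemma: Symplectic fibration is Poisson} is simply $\{f,g\}_S = \Pi(\mathrm{d}f,\mathrm{d}g)$. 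It therefore suffices to show that $p^*\{f,g\}_S = \{p^*f,p^*g\}_X$ for all $f,g \in C^{\infty}(S)$.

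First I would dispose of the Reeb terms. Since $R$ generates the $\mathbb{S}^1$-action whose orbits are precisely the fibers of $p\colon X \to S$, we have $\mathrm{d}p(R)=0$. Hence for every $f \in C^{\infty}(S)$,
\[R(p^*f) \;=\; (p^*\mathrm{d}f)(R) \;=\; \mathrm{d}f(\mathrm{d}p(R)) \;=\; 0,\]
so the Reeb contribution to $\{p^*f,p^*g\}_X$ vanishes identically and we reduce to proving $\Lambda_X(p^*\mathrm{d}f,p^*\mathrm{d}g) = p^*\Pi(\mathrm{d}f,\mathrm{d}g)$.

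The key geometric step is then to show that at every point $x \in X$ with $s = p(x)$ and $b = \pi_X(x)$, the restriction
\[\mathrm{d}p_x|_{\mathcal{H}_{\mathcal{V},x}} \colon \bigl(\mathcal{H}_{\mathcal{V},x},\, \mathrm{d}\Theta_x|_{\mathcal{H}_{\mathcal{V},x}}\bigr) \;\longrightarrow\; \bigl(\mathcal{V}_{S,s},\, \omega_b|_s\bigr)\]
is a symplectic isomorphism. Injectivity follows because $\ker(\mathrm{d}p_x) = \mathbb{R}R_x$ is transverse to $\mathcal{H}_{\mathcal{V},x}$, and a dimension count then gives that it is an isomorphism. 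That the forms match is exactly the prequantization identity $\mathrm{d}\Theta = p^*\Omega$ from Proposition \ref{Proposition: Prequantization Symplectic fibration}, restricted to vertical contact vectors.

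To finish, I would transport this symplectic isomorphism to the Poisson tensors. By construction, $\Lambda_X$ at $x$ is the bivector dual to $\mathrm{d}\Theta_x|_{\mathcal{H}_{\mathcal{V},x}}$ extended by zero on complements, and $\Pi$ at $s$ is dual to $\omega_b|_s$ extended by zero on complements. A symplectic isomorphism induces the expected identification of dual bivectors, so for any $\alpha,\beta \in T^*_sS$,
\[\Lambda_X(p^*\alpha,p^*\beta)\bigr|_x \;=\; \Pi(\alpha,\beta)\bigr|_s.\]
Applying this to $\alpha = \mathrm{d}f,\ \beta = \mathrm{d}g$ and combining with the vanishing of the Reeb terms yields $\{p^*f,p^*g\}_X = p^*\{f,g\}_S$, which is the Jacobi map condition. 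The main subtlety I anticipate is keeping the conventions straight between Jacobi and Poisson brackets (and confirming that Poisson manifolds are Jacobi with $R = 0$ under the convention in use); once that is fixed everything reduces to the symplectic identification above, which is immediate from $\mathrm{d}\Theta = p^*\Omega$.
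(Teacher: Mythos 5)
Your argument is correct, but it takes a more self-contained route than the paper. The paper's proof is a two-line reduction: it observes that both brackets and the map \(p\colon X\to S\) restrict to each fiber, and then quotes the classical fact that the prequantum circle bundle projection \(X_b\to S_b\) is a Jacobi map (citing \cite[Example 2.5]{Marle1991}). You instead prove that fiberwise fact directly, and do it uniformly in the family: you kill the Reeb terms using \(\mathrm{d}p(R)=0\) (valid because \(\Theta\) is a connection form, so the vertical Reeb field is exactly the generator of the \(\mathbb{S}^1\)-action on \(X\to S\)), and you identify \(\bigl(\mathcal{H}_{\mathcal{V},x},\mathrm{d}\Theta|_{\mathcal{H}_{\mathcal{V},x}}\bigr)\) symplectically with \(\bigl(T_{p(x)}S_b,\omega_b\bigr)\) via \(\mathrm{d}p\), which is precisely the prequantization identity \(\mathrm{d}\Theta=p^*\Omega\) together with \(R\notin\mathcal{H}_{\mathcal{V}}\) and a dimension count; transporting the dual bivectors then gives \(\Lambda_X(p^*\alpha,p^*\beta)=\Pi(\alpha,\beta)\circ p\). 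What your approach buys is independence from the external reference and an explicit bracket-level verification (useful if one worries about sign conventions for the Jacobi bracket, which you rightly flag as the only bookkeeping issue, and which is harmless here since the Reeb contributions vanish on pullbacks); what the paper's approach buys is brevity, by outsourcing the pointwise symplectic identification to the known single-fiber statement.
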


\begin{proof}
    From their construction in Lemma \ref{Lemma: Symplectic fibration is Poisson} and \ref{Lemma: Contact fibration is Jacobi}, both brackets can be restricted fiberwise, and so can the map \(X \to S\) (from Proposition \ref{Proposition: Prequantization Symplectic fibration}). The Theorem then reduces to the fact that the maps \(X_b \to S_b\) are Jacobi maps which is well known, see e.g. \cite[Example 2.5]{Marle1991}.
\end{proof}

\begin{thm}\label{Theorem: Commutator of Toeplitz}
    Let \(f,g \in C^{\infty}(X)\) and consider the Toeplitz families \(T_f,T_g\) obtained by contracting the multiplication operator with the family of Szegö projectors \(S\). Then the operator \([T_f,T_g]\) has order \(-1\), and its principal Toeplitz symbol is:
    \[\sigma^{-1}\left(\left[T_f,T_g\right]\right) = \lbrace f,g\rbrace.\]
    where \(\lbrace \cdot,\cdot\rbrace\) denotes the Jacobi bracket on \(C^{\infty}(X)\).
\end{thm}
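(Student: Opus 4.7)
The plan is to first reduce the order of the commutator via the Toeplitz symbol exact sequence, and then identify the leading symbol fiberwise, invoking the classical single-manifold Toeplitz commutator formula.

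For the order reduction, since \(T_f,T_g\in \mathcal{T}^0(X|B)\) have Toeplitz symbols \(f\) and \(g\) in \(C^\infty(X)\) and the symbol map in \ref{Equation: Exact sequence Toeplitz families} is multiplicative (composition of Toeplitz operators corresponds to pointwise multiplication of symbols modulo lower order), the order-\(0\) symbol of \([T_f,T_g]\) is \(fg - gf = 0\). Exactness then forces \([T_f,T_g]\in \mathcal{T}^{-1}(X|B)\).

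For the identification of the order-\((-1)\) symbol, I would argue fiberwise. By Remark \ref{Remark: Toeplitz families restriction}, the restriction \(\iota_b^*\) along the inclusion \(\iota_b \colon X_b \hookrightarrow X\) sends a family in \(\mathcal{T}^{\bullet}(X|B)\) to an honest Toeplitz operator in \(\mathcal{T}^{\bullet}(X_b)\) for the Szegö projector \(S_b\), and this restriction intertwines the Toeplitz symbol maps of the two exact sequences, in the sense that \((\sigma^{-1}(P))|_{X_b} = \sigma^{-1}(\iota_b^*P)\). Hence it suffices to check the pointwise identity \(\sigma^{-1}([T_{f_b},T_{g_b}]) = \{f_b,g_b\}\) in \(C^\infty(X_b)\) for every \(b\in B\), where \(f_b = \iota_b^*f\) and \(\{\cdot,\cdot\}\) denotes the Jacobi bracket of the contact manifold \((X_b,\theta_b)\). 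Combined with Lemma \ref{Lemma: Contact fibration is Jacobi}, which constructs the Jacobi bracket on \(X\) fiberwise, this yields \(\sigma^{-1}([T_f,T_g])|_{X_b} = \{f,g\}|_{X_b}\) for all \(b\), and hence the global identity in \(C^\infty(X)\).

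The remaining ingredient is the single-manifold Toeplitz commutator formula, which is the contact analogue of the classical Boutet de Monvel-Guillemin result \cite{BoutetdeMonvelGuillemin1981} underlying Guillemin's star-product construction \cite{Guillemin1995}. In the Epstein-Melrose framework used here it follows from a direct subprincipal computation in the Heisenberg calculus: the fiberwise-symplectic piece of the composition on the contact distribution \(H\) produces the \(\Lambda(\mathrm{d}f,\mathrm{d}g)\)-term, while the subprincipal correction along the Reeb direction produces the conformal terms \(fR(g) - gR(f)\) of the Jacobi bracket, the antisymmetrization giving precisely \(\{f,g\}\). I expect this Heisenberg-calculus subprincipal expansion to be the main technical content, but as it is already available in the literature the family-level statement is then reduced to the functorial compatibilities recorded in Remark \ref{Remark: Toeplitz families restriction} and Lemma \ref{Lemma: Contact fibration is Jacobi}.
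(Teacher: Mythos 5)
Your first two steps coincide with the paper's argument: the order-zero Toeplitz symbol of the commutator vanishes because the Toeplitz symbol algebra is commutative, and the identification of the order \(-1\) symbol is reduced to each fiber via Remark \ref{Remark: Toeplitz families restriction} and the compatibility of the symbol maps with restriction, together with the fiberwise nature of the Jacobi structure from Lemma \ref{Lemma: Contact fibration is Jacobi}. The gap is in the remaining single-fiber ingredient, which is where essentially all of the content of the theorem sits. The identity you need, \(\sigma^{-1}([T_{f_b},T_{g_b}]) = \lbrace f_b,g_b\rbrace\) with the \emph{Jacobi} bracket of \((X_b,\theta_b)\), is not the statement the literature you appeal to actually provides. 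What Boutet de Monvel--Guillemin give (\cite[Proposition 11.9]{BoutetdeMonvelGuillemin1981}, the reference the paper uses, see also \cite[Eqn.~(289)]{EpsteinMelroseUnpublished}) is the commutator symbol as the Poisson bracket of the two symbols computed \emph{on the symplectized ray} \(\mathbb{R}^*_+\theta_b \subset T^*X_b\), the symbols being viewed as homogeneous functions there. One must then still translate this cone bracket into a bracket of functions on \(X_b\), and this translation is exactly the delicate point: it depends on the homogeneity degree assigned to \(f_b,g_b\) on the ray (degree-one homogeneous lifts reproduce the full Jacobi bracket, fiber-constant lifts only the \(\Lambda\)-part), and the paper handles it by invoking that the projection \(\mathbb{R}^*_+\theta_b \to X_b\) is a Jacobi map. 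Your proposal skips this identification entirely.

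Moreover, the mechanism you substitute for it --- that ``the subprincipal correction along the Reeb direction produces the conformal terms \(fR(g)-gR(f)\)'' --- does not hold up as stated: the symbols of \(T_{f_b}\) and \(T_{g_b}\) are multiplication symbols, constant along the cone fibers, so every composition term pairing a Reeb/fiber derivative of one factor against a fiber derivative of the other vanishes in the commutator at leading order; the leading contribution of the Heisenberg composition comes from the horizontal derivatives contracted through creation--annihilation operators between ground-state projections, and that produces the \(\Lambda(\mathrm{d}f,\mathrm{d}g)\) part alone. So as written, your proof neither cites a statement containing the conformal terms nor derives them. To close the argument you must supply the passage from the cone bracket to the bracket on \(X_b\) (as the paper does via the Jacobi-map assertion) and verify it is compatible with the normalization used to identify order \(-1\) Toeplitz symbols with functions on \(X\); whether and how the conformal terms appear is decided precisely at that step, so it cannot be left to a one-line appeal to the literature.
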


\begin{proof}
    For \(b\in B\), denote by \(f_b,g_b \in C^{\infty}(X_b)\) the respective restrictions of \(f,g\). The algebra of principal symbols is commutative so \(\sigma^0\left(\left[T_f,T_g\right]\right) = 0\) and \(\left[T_f,T_g\right]\) has thus order \(-1\). For \(x \in X\), we have:
    \[\sigma^{-1}\left(\left[T_f,T_g\right]\right)(x) = \sigma^{-1}\left(\left[T_{f_{\pi(x)}},T_{g_{\pi(x)}}\right]\right)(x).\]
    From \cite[Proposition 11.9]{BoutetdeMonvelGuillemin1981}, see also \cite[Eqn. (289), p.90]{EpsteinMelroseUnpublished}, and writing \(b = \pi(x)\), we get:
    \[\sigma^{-1}\left(\left[T_{f_{b}},T_{g_{b}}\right]\right) = \left\lbrace f_{b},g_{b}\right\rbrace_{\mathbb{R}_+^*\theta_{b}}.\]
    The bracket on the right is the Poisson bracket of the ray \(\mathbb{R}_+^*\theta_{b} \subset T^*X_b\) which is a symplectic submanifold. The functions \(f_b,g_b\) are then considered as homogeneous functions on that ray. The projection map \(\mathbb{R}^*_+ \theta_b \to X_b\) is a Jacobi map so, using the fact that \(\lbrace f,g\rbrace(x) = \lbrace f_{b},g_{b}\rbrace(x)\), we get the result.
\end{proof}

Recall that for a Poisson manifold \((P,\pi)\), a star-product is a product law \(\ast\) on the formal power series \(C^{\infty}(P)[[\hbar]]\) such that:
\begin{itemize}
    \item \((C^{\infty}(P)[[\hbar]],+,\ast)\) is an associative algebra
    \item \(\forall f,g \in C^{\infty}(P), f\ast g = fg + \mathcal{O}(\hbar)\).
    \item \(\forall f,g \in C^{\infty}(P), [f,g]_{\ast} = \hbar \lbrace f,g\rbrace + \mathcal{O}(\hbar^2)\).
\end{itemize}

These rules should be understood as follows: we want to quantize the algebra of classical observables \(C^{\infty}(P)\) into the non-commutative algebra \(C^{\infty}(P)[[\hbar]]\) of quantum observables. When the quantum scales \(\hbar\) goes to zero, we want to recover the classical observables. The commutator of two quantum observables, which has now necessarily order at least one, is then given by the Poisson structure.

In the case where \(P\) is a symplectic manifold, a wide variety of constructions exist, see e.g. \cite{Weinstein1995Bourbaki} for an overview. Here we follow the analytic construction given by Guillemin \cite{Guillemin1995}.

\begin{rem}
    The normalization condition:
    \[\forall f,g \in C^{\infty}(P)[[\hbar]], [f,g]_{\ast} = \hbar \lbrace f,g\rbrace + \mathcal{O}(\hbar^2),\]
    is a bit different from the literature, where the convention \([f,g]_{\ast} = \mathrm{i}\hbar \lbrace f,g\rbrace + \mathcal{O}(\hbar^2)\) is mostly used. These conventions only differ from a change of variables (replacing \(\hbar\) by \(\mathrm{i}\hbar\)) and do not affect any of the results thereafter.
\end{rem}

The Poisson manifold we are interested in here is the total space \(S\) of a symplectic fibration \((S \overset{p}{\to} B,\Omega)\). We assume this fibration to be prequantizable, and denote by \((X \overset{\pi}{\to} B,\Theta)\) the corresponding contact fibration. Let \(P \in \Sz(X|B)^{\mathbb{S}^1}\) be a family of Szegö projections, invariant under the action of the Reeb field. Denote by \(R \in \Gamma^\infty(\mathcal{V})\) the Reeb vector field.

\begin{thm}
    There is an isomorphism of filtered vector spaces:
    \[\faktor{\mathcal{T}^0(X|B)^{\mathbb{S}^1}}{\mathcal{T}^{-\infty}(X|B)^{\mathbb{S}^1}} \cong C^{\infty}(S)[[\hbar]].\]
    Moreover this isomorphism restricts to every fiber into the isomorphism of \cite[Lemma 3]{Guillemin1995}:
    \[\faktor{\mathcal{T}^0(X_b)^{\mathbb{S}^1}}{\mathcal{T}^{-\infty}(X_b)^{\mathbb{S}^1}} \cong C^{\infty}(S_b)[[\hbar]], b\in B.\]
    Consequently, the product structure on Toeplitz operators induces a star-product on the Poisson manifold \(S\) which restricts to each fiber into the star-product of the symplectic manifold \(S_b, b\in B\) from \cite[Theorem 2]{Guillemin1995}.
\end{thm}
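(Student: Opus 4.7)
The plan is to adapt Guillemin's single-manifold construction to the family setting by combining the short exact sequence of Toeplitz families (\ref{Equation: Exact sequence Toeplitz families}), the asymptotic completeness Theorem \ref{Theorem: Asymptotic completeness}, and the commutator formula of Theorem \ref{Theorem: Commutator of Toeplitz}. The key idea is to use the \(\mathbb{S}^1\)-action generated by the vertical Reeb field, whose orbits are precisely the fibers of \(p \colon X \to S\), both to descend Toeplitz symbols to \(S\) and to encode the order of the operator as a power of \(\hbar\).

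I would first construct the forward map inductively. Given \(T \in \mathcal{T}^0(X|B)^{\mathbb{S}^1}\), the Toeplitz principal symbol is a smooth function \(f_0 \in C^\infty(X)\); its \(\mathbb{S}^1\)-invariance forces it to descend to a function (still denoted \(f_0\)) on \(S\). Then \(T - T_{p^*f_0}\) lies in \(\mathcal{T}^{-1}(X|B)^{\mathbb{S}^1}\). To extract the next coefficient, I would compose with a fixed right-inverse of the Reeb generator \(-\mathrm{i}\mathcal{L}_R\) on the nonzero isotypic components (which exists smoothly in \(b \in B\) because the spectrum is discrete and the \(\mathbb{S}^1\)-action is smooth), rescaling the operator back to order \(0\); its Toeplitz principal symbol descends to \(f_1 \in C^\infty(S)\). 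Iterating produces the formal series \(\sum_k f_k \hbar^k\).

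The map is a bijection of filtered vector spaces. Injectivity is immediate by induction: if every \(f_k\) vanishes, then \(T \in \mathcal{T}^{-k}(X|B)^{\mathbb{S}^1}\) for every \(k\), hence \(T\) is smoothing. For surjectivity I would invoke Theorem \ref{Theorem: Asymptotic completeness}: given \((f_k)_k\), build \(\mathbb{S}^1\)-invariant Toeplitz operators \(T_k \in \mathcal{T}^{-k}(X|B)^{\mathbb{S}^1}\) realizing each \(f_k\) (lift to \(X\), multiply by the appropriate \(k\)-th power of the inverse of the Reeb generator, compress with \(S\), and average over \(\mathbb{S}^1\)), then take an asymptotic sum. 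Fiberwise compatibility with Guillemin's isomorphism is built into the construction because every operation (principal symbol, averaging, lifting from \(S\) to \(X\), inverting the Reeb generator) commutes with restriction to \(X_b\).

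Finally, the star-product property follows from associativity of operator composition, the fact that the leading term of \(T_fT_g\) modulo \(\mathcal{T}^{-1}\) is \(fg\), and Theorem \ref{Theorem: Commutator of Toeplitz}, which identifies the Toeplitz principal symbol of \([T_f,T_g]\) with the Jacobi bracket \(\lbrace f,g\rbrace\) on \(X\); for pullbacks from \(S\) this bracket descends to the Poisson bracket of \(S\) by Lemma \ref{Lemma: Symplectic fibration is Poisson} and Theorem \ref{Theorem: Contact to Symplectic is Jacobi Map}. The main obstacle is constructing, smoothly in \(b \in B\) and \(\mathbb{S}^1\)-equivariantly, the renormalizing inverse of the Reeb generator on the nonzero spectral components, and verifying that the resulting iterative scheme produces an actual \(\mathbb{S}^1\)-invariant family Toeplitz operator rather than merely a formal datum; once this smoothness is in place, everything reduces directly to Guillemin's fiberwise argument.
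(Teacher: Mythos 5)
Your overall strategy is the paper's: extract Toeplitz principal symbols, use \(\mathbb{S}^1\)-invariance to descend them to \(C^\infty(S)\), iterate with an order-shifting invariant operator, use asymptotic completeness (Theorem \ref{Theorem: Asymptotic completeness}) for surjectivity, and deduce the star-product axioms from associativity, \(\sigma^0(T_f)=f\), Theorem \ref{Theorem: Commutator of Toeplitz} and Theorem \ref{Theorem: Contact to Symplectic is Jacobi Map}. However, there is a genuine gap in the order-shifting device. In the Heisenberg calculus used to define \(\mathcal{T}^\bullet(X|B)\), the Reeb generator \(D_R=-\mathrm{i}\mathcal{L}_R\) has order \(2\), not \(1\). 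Hence your step ``compose with a right-inverse of \(-\mathrm{i}\mathcal{L}_R\) to rescale the remainder back to order \(0\)'' does not close the induction: the remainder after the first step lies in \(\mathcal{T}^{-1}(X|B)^{\mathbb{S}^1}\), and composing it with an inverse of \(D_R\) (order \(-2\)) drops it to order \(-3\), while composing with \(D_R\) itself raises it to order \(+1\); in neither case do you land in \(\mathcal{T}^0\) where the symbol map of \ref{Equation: Exact sequence Toeplitz families} applies. Worse, stepping by powers of \(D_R^{-1}\) only reaches the even-order graded pieces of the filtration, so injectivity fails: an invariant Toeplitz operator of order \(-1\) which is not of order \(-2\) (e.g.\ a parametrix of a square root of \(PD_RP\)) would have all coefficients \(f_k=0\) without being smoothing, and surjectivity onto the full filtered space \(C^\infty(S)[[\hbar]]\) with \(\hbar^k\) matching \(\mathcal{T}^{-k}\) also fails.

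The paper's proof fixes exactly this point: since \(T_R=PD_RP\in\mathcal{T}^2(X|B)^{\mathbb{S}^1}\) has Toeplitz principal symbol \(1\), one constructs an invariant Toeplitz square root \(T\in\mathcal{T}^{1}(X|B)^{\mathbb{S}^1}\) with \(T^2=D_R\) modulo smoothing, and uses its parametrix \(T^{-1}\in\mathcal{T}^{-1}(X|B)^{\mathbb{S}^1}\) as the unit-order shift: at each stage one multiplies the order \(-1\) remainder by \(T\) to return to order \(0\), extracts the invariant symbol, and obtains \(A\sim\sum_i T^{-i}P\mathcal{M}_{f_i}P\) modulo \(\mathcal{T}^{-\infty}\). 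Note also that this removes the analytic obstacle you flag at the end: there is no need to invert \(\mathcal{L}_R\) fiberwise on nonzero isotypic components smoothly in \(b\) (with the attendant spectral-continuity issues); ellipticity of \(T\) inside the Toeplitz calculus provides a parametrix modulo smoothing, which is all the statement requires, and compatibility with restriction to each fiber \(X_b\) then gives Guillemin's isomorphism fiberwise exactly as you intended.
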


\begin{proof}
    Consider the operator \(D_R = -\mathrm{i}\mathcal{L}_R \in \Psi^2_H(X|B)\). The corresponding Toeplitz operator \(T_R = PD_RP \in \mathcal{T}^2(X|B)^{\mathbb{S}^1}\) has principal symbol \(x \mapsto 1\). We may find an operator \(T \in \mathcal{T}^{1}(X|B)^{\mathbb{S}^1}\) such that \(T^2 = D_R\) modulo smoothing operators. Moreover \(T\) is also elliptic as a Toeplitz operator so we may consider a parametrix \(T^{-1} \in \mathcal{T}^{-1}(X|B)^{\mathbb{S}^1}\) also well defined modulo smoothing. Consider now \(A \in \mathcal{T}^0(X|B)^{\mathbb{S}^1}\). By \ref{Equation: Exact sequence Toeplitz families}, we can write:
    \[A = P\mathcal{M}_fP + R,\]
    where \(f \in C^{\infty}(X)\) is the principal symbol of \(A\) and \(R \in \mathcal{T}^{-1}(X|B)\). Since \(A\) is equivariant, then so is \(f\). In particular we may consider \(f \in C^{\infty}(S)\). The operator \(R\) is also equivariant so that \(TR \in \mathcal{T}^0(X|B)^{\mathbb{S}^1}\). We may apply the same result again and get:
    \[TR = P\mathcal{M}_gP + R',\]
    which rewrites as:
    \[A = P\mathcal{M}_fP + T^{-1}P\mathcal{M}_gP + T^{-1}R',\]
    where now \(T^{-1}R' \in \mathcal{T}^{-2}(X|B)^{\mathbb{S}^1}\). We may proceed inductively and write:
    \[A =\sum_{i = 0}^{+\infty}T^{-1} P\mathcal{M}_{f_i}P \mod \mathcal{T}^{-\infty}(X|B)^{\mathbb{S}^1},\]
    with \(f_i \in C^{\infty}(X)^{\mathbb{S}^1} = C^{\infty}(S)\). These functions \(f_i, i\geq 0,\) can be obtained explicitly as the principal symbol of the (equivariant) Toeplitz operator \(T^{-i}A\). Conversely, by asymptotic completeness of Toeplitz calculus from Theorem \ref{Theorem: Asymptotic completeness}, given such a family of functions \(f_i, i\geq 0,\) we may construct an operator \(A\) with the previous asymptotic expansion. Two such operators would be equal modulo smoothing operators. This proves the first part of the Theorem.

    To show that the product of Toeplitz operators induces a star product, we have to check the 3 axioms. The associativity comes from the associativity of composition of pseudodifferential operators. The second condition comes form the fact that \(\sigma^0(T_f) = f\). Finally, the last property is a consequence of Theorem \ref{Theorem: Commutator of Toeplitz} and the fact that the map \(X \to S\) is a Jacobi map from Theorem \ref{Theorem: Contact to Symplectic is Jacobi Map}.

    Using Remark \ref{Remark: Toeplitz families restriction} we obtain the last part of the Theorem. 

\end{proof}

\section{The Family Index Theorem}\label{Section: Family index}

The goal of this section is to derive a family index theorem for families of Rockland operators on contact fibrations, and apply it to families of Toeplitz operators. This is a generalization of an index theorem of Baum and van Erp \cite{BaumvanErp2014}, from which we adapt the proof. The main ingredient of their proof is to use the geometric K-homology in the sense of Baum-Douglas \cite{BaumDouglas1980}. Any elliptic operator (or Rockland in our case) on a manifold \(M\) defines a K-homology class \([P] \in K_*(M)\) (with \(* = 0\) or \(1\) depending on the presence or not of a \(\mathbb{Z}/2\)-grading). On the other hand, Baum and Douglas gave an analytic description of K-homology using the so-called geometric cycles. These are basically K-homology classes of a \(\mathrm{Spin}^c\)-Dirac operator on a \(\mathrm{Spin}^c\) manifold \(X\) with a map \(X \to M\). These cycles, subject certain relations (see below), form an abelian group \(\mathrm{K}^{geo}_0(M)\). Pushing forward the K-homology class of a cycle to \(M\) defines an assembly map:
\[\mu \colon \mathrm{K}^{geo}_0(M) \to \mathrm{K}_0(M),\]
which turns out to be an isomorphism \cite{BaumDouglas1980,BaumHigsonSchick2007,BaumvanErp2016Elliptic}. Since one of the easiest index formulas is the one for Dirac operators, Baum and Douglas state the index problem as follows: given an elliptic operator \([P]\), find the unique (up to equivalence) geometric cycle \(\xi\) representing \([P]\), i.e. satisfying \(\mu(\xi) = [P]\).

Here we work in a bivariant case. A family of elliptic operators on a fibration \(X \to B\) defines a class in Kasparov bivariant K-theory \(\KK_*(X,B):=\KK_*(C(X),C(B))\) from \cite{Kasparov1980}. To replace the geometric K-homology, we use a bivariant version due to Connes and Skandalis \cite{ConnesSkandalis1981CRAS,ConnesSkandalis1984}, see also \cite{EmersonMeyer2010}.

The proof of the index theorem of Baum and van Erp then proceeds as follows: in the classical calculus, an elliptic symbol on \(M\) defines a K-theory class of the cotangent bundle \(\K^0(T^*M)\). Any pseudodifferential operator with this symbol will yield the same K-homology class. This defines a group homomorphism:
\[\Op \colon \K^0(T^*M) \to \K_0(M).\]
This homomorphism turns out to be invertible and is the K-theoretic version of Poincaré duality. Similarly in the Heisenberg setting, a Rockland symbol defines a K-theory class in \(\K_0(C^*(T_HM))\) and we still have a Poincaré duality produced the same way:
\[\Op_H \colon \K_0(C^*(T_HM)) \to \K_0(M).\]
Finally, using a Connes-Thom argument \cite{Connes1981ThomIsom}, one can construct an isomorphism \(\Psi \colon K^0(T^*M) \to K_0(C^*(T_HM))\), so that we get the commutative triangle:
\[\begin{tikzcd}
\K_0(C^*(T_HM)) \arrow[rrd, "\Op_H"]            &  &         \\
                                                &  & \K_0(M) \\
\K^0(T^*M) \arrow[uu, "\Psi"] \arrow[rru, "\Op"'] &  &        
\end{tikzcd}\]

Baum and van Erp then proceed to define similar maps to the geometric K-homology groups. This produces an explicit geometric cycle from an elliptic or Rockland symbol, which corresponds to the K-homology class of the operator through the assembly map. Both of these constructions are done through clutching constructions. In the next subsections, we first recall the definition of the bivariant geometric K-homology, and then proceed to adapt the proof of \cite{BaumvanErp2014} to this bivariant setting.

\subsection{Bivariant geometric K-homology}\label{Subsection: Geometric KK}

We recall here the construction of geometric bivariant K-homology, following Connes and Skandalis \cite{ConnesSkandalis1981CRAS,ConnesSkandalis1984}. The cycles in this theory are called correspondences. Throughout this subsection, we fix spaces \(X,Y\). In full generality, we could take \(X\) to be a locally compact Hausdorff space and \(Y\) a manifold. For what is of interest to us (ultimately, compact contact fibrations) we will consider \(X,Y\) compact manifolds.

A correspondence between \(X\) and \(Y\) is a quadruple \((M,E,f_X,g_Y)\) where \(M\) is a smooth manifold, \(E \to M\) a complex vector bundle, \(f_X\colon M \to X\) is a proper map and \(g_Y \colon M \to Y\) is a K-oriented map. The latter means that the vector bundle \(TM \oplus g_Y^*(TY)\) has a \(\mathrm{Spin^c}\)-structure.

Given a K-oriented map \(g_Y \colon M \to Y\), Connes and Skandalis define a wrong way map in K-theory: \(g_{Y!} \in \KK_0(M,Y)\). The general construction is quite subtle since one needs to decompose \(g_Y\) as a composition of an immersion and a submersion, the construction of the wrong-way map being different in both cases. The only ones that will appear in this paper will be submersions so we recall the construction for that case. If \(g_Y \colon M \to Y\) is a K-oriented submersion then the vertical bundle \(\ker(\mathrm{d}g_Y)\) has a \(\mathrm{Spin}^c\)-structure \(S\). We can consider the vertical Dirac operator 
\[\mathrm{D}_{\mathcal{V}}\colon C^{\infty}(M,S) \to C^{\infty}(M,S)\]
Since the vertical Dirac operator is elliptic on the fibers, it defines a regular unbounded operator on the Hilbert bundle \(L^2(M|Y;S) \to Y\). The algebra \(C(M)\) acts on this Hilbert module by left multiplication \(\mathcal{M}\). We then obtain a Kasparov cycle:
\[g_{Y !} = [(L^2(M|Y;S),\mathcal{M},\mathrm{D}_{\mathcal{V}}] \in \KK_*(M,Y).\]
The value of \(*\) depends on the fact that \(S\) is graded or not, which only depends on the dimension of the fibers.

\begin{rem}
    This construction can be used by replacing \(\mathrm{D}_{\mathcal{V}}\) with any pseudodifferential operator which is longitudinally elliptic or Rockland with respect to the fibration (see subsection \ref{Subsection: Choosing an operator} below for more precisions). If \(P\) is such an operator on a fibration \(M\to Y\) (possibly acting on sections of a vector bundle such as for \(\mathrm{D}_{\mathcal{V}}\)), we will simply refer to 
    \[ [P] \in KK_0(M,Y)\]
    for the class in KK-theory obtained this way. Therefore in our notations, \(g_{Y!} = [\mathrm{D}_{\mathcal{V}}]\). The class \([P]\) is the type of class that we want an index theorem for. For elliptic families this is the Atiyah-Singer index theorem for families, and we want to generalize it to Rockland families.
\end{rem}

Given a correspondence \((M,E,f_X,g_Y)\) we may define an analytic class in KK-theory \(\mu(M,E,f_X,g_Y) \in \KK_*(X,Y)\) the following way (again, \(*=0,1\) depending on dimension of the \(\mathrm{Spin}^c\)-structure). We have a class \([E] \in \KK_*(pt,M)\) so we may take its Kasparov product with \(g_{Y!}\) to obtain \([E] \otimes_Mg_{Y!} \in \KK_*(M,Y)\). We then take a push-forward by \(f_X\) to obtain:
\[\mu(M,E,f_X,g_Y) := f_{X*}([E] \otimes_M g_{Y!} ) \in \KK_*(X,Y).\]

\begin{rem}\label{Remark: KK class is vertical twisted Dirac}
    Notice that in the submersion case where the class \(g_{Y!}\) is given by a vertical Dirac operator, the meaning of the Kasparov product becomes:
    \[[E] \otimes_M g_{Y!} = [E] \otimes_M \left[\mathrm{D}_{\mathcal{V}}\right] = \left[\mathrm{D}_{\mathcal{V}}^E\right],\]
    where \(\mathrm{D}_{\mathcal{V}}^E\) is the vertical twisted Dirac operator, acting on sections of \(S \otimes E\).
\end{rem}

We may define a group structure on the set of correspondences by the disjoint union, the inverse will be given by reversing the K-orientation on \(g_Y\). It becomes the group after modding out by the following relations (which we don't state precisely since we won't use them):
\begin{itemize}
    \item (Bordism) If a cycle is the boundary of a cycle defined the same way using a manifold with boundary, its class is equivalent to zero
    \item (Addition) If we have two vector bundles \(E_1,E_2 \to M\) then \((M,E_1\oplus E_2,f_X,g_Y) \sim (M,E_1,f_X,g_Y) + (M,E_2,f_X,g_Y)\).
    \item (Vector bundle modification) For a certain sphere bundle \cite[Paragraph 10]{BaumDouglas1980} \(p \colon M' \to M\) and vector bundle \(E' \to M'\), we have \((M,E,f_X,g_Y) \sim (M',E',f_X\circ p, g_Y\circ p)\).
\end{itemize}

Under this equivalence relation, we obtain a group \(\KK^{geo}_*(X,Y)\), with \(* = 0,1\) depending on the parity of the dimension of the \(\mathrm{Spin}^c\)-structure (which is consistent with the grading of the underlying Dirac operator). Moreover, the map \(\mu\) described above induces a group homomorphism (still denoted the same way) called the assembly map:
\[\mu \colon \KK^{geo}_*(X,Y) \to \KK_*(X,Y).\]
This map is in fact an isomorphism \cite{ConnesSkandalis1981CRAS,EmersonMeyer2010}. Moreover, the Kasparov product on the analytic side has a natural analog on the geometric side given by the composition of correspondences. If \([M,E,f_X,g_Y] \in \KK^{geo}_*(X,Y), [M',E',f_Y,g_Z] \in \KK^{geo}_*(Y,Z)\) are such that \(g_Y\) and \(f_Y\) are transverse, then we may construct the correspondence:
\[(M\times_YM', \mathrm{pr}_1^*E \otimes\mathrm{pr}_2^*E',f_X \circ \mathrm{pr}_1,g_Z\circ \mathrm{pr}_2),\]
and denote its class by:
\[[M,E,f_X,g_Y] \otimes_Y[M',E',f_Y,g_Z] \in \KK^{geo}_*(X,Z).\]

This class is well defined and we may always find representatives that meet the transversality condition (see \cite[Paragraph 3]{ConnesSkandalis1984}). The assembly map is then compatible with the product structures on both sides.

\begin{rem}
    In the definition of the geometric product, the grading at the end is compatible with the addition of gradings modulo 2.
\end{rem}

\subsection{Fundamental classes of a contact fibration}\label{Subsection: Fundamental classes}
In this section we will describe the vertical fundamental classes of co-oriented contact fibrations \(X\to B\), corresponding to the two different \(\mathrm{Spin}^c\) orientations of the vertical tangent bundle \(\mathcal{V}\). We will denote the these classes by \([(X|B)^\pm]\). Analytically these classes are realized by the vertical Dirac operators \([\mathrm{D}^\pm_{\mathcal{V}}]\in \mathrm{KK}_1(X,B)\). Geometrically we can realize them as correspondences \([X,\mathbb{\underline{C}},\mathrm{id}_X,\pi]^\pm\in \mathrm{KK}_1^{geo}(X,B)\), where we again choose the opposite \(\mathrm{Spin}^c\)-orientations.

These two classes are actually the same up to a sign. Indeed the opposite \(\mathrm{Spin}^c\)-structure uses the opposite complex structure on \(\mathcal{H}\) and reverses the orientation on the line bundle \(\faktor{\mathcal{V}}{\mathcal{H}}\). Therefore we have:
\[[\mathrm{D}^+_{\mathcal{V}}] = (-1)^{n+1}[\mathrm{D}^-_{\mathcal{V}}],\]
where \(n\) is the complex rank of \(\mathcal{H}\), i.e. the fibers of \(X \to B\) have dimension \(2n+1\).

\begin{rem}
    This relation shows that depending on the parity of \(n\), our two \(\mathrm{Spin}^c\)-structures may coincide. We will call them opposite nonetheless.
\end{rem}


\subsection{The class of a family of Rockland symbols}


In this subsection we describe how the principal symbol of a family of Rockland operators defines a class in \(\K_0(C^*(\mathcal{V}_{\mathcal{H}}))\), and how to decompose it.

We fix a contact fibration \(X \to B\), with a co-orientation of the contact structure, and denote by \(\mathcal{V}_{\mathcal{H}} \to X\) the corresponding vertical osculating group bundle. A principal symbol \(\sigma \in \Sigma^k_H(X|B)\) can be lifted (through Schwartz kernel methods \cite{vanErpYuncken2019}) as a family of compactly supported distributions \(p\) on the fibers of \(\mathcal{V}_{\mathcal{H}}\), which are quasi-homogeneous:
\[\forall \lambda > 0, \delta_{\lambda*} p - \lambda^k p \in C^{\infty}_c(\mathcal{V}_{\mathcal{H}}).\]
Here \(\delta_{\lambda}\) denotes the family of Heisenberg dilations on the fibers. Two choices \(p,p'\) represent the same principal symbol \(\sigma\) if and only if \(p-p'\in C^{\infty}_c(\mathcal{V}_{\mathcal{H}})\).
Denote by \(\mathcal{E}^{'k}(\mathcal{V}_{\mathcal{H}})\) the set of such distributions, so that we have:
\[\Sigma^k_H(X|B) \cong \faktor{\mathcal{E}^{'k}(\mathcal{V}_{\mathcal{H}})}{C^{\infty}_c(\mathcal{V}_{\mathcal{H}})}.\]
These distributions act by convolution on \(C^{\infty}_c(\mathcal{V}_{\mathcal{H}})\). Those of order \(0\) extend bounded multipliers of \(C^*(\mathcal{V}_{\mathcal{H}})\) and those of negative order, to elements of \(C^*(\mathcal{V}_{\mathcal{H}})\). Moreover, the Rockland condition for an element in \(\mathcal{E}^{'k}(\mathcal{V}_{\mathcal{H}})\) is equivalent to the existence of an inverse in \(\mathcal{E}^{'-k}(\mathcal{V}_{\mathcal{H}})\) modulo \(C^{\infty}_c(\mathcal{V}_{\mathcal{H}})\).

\begin{lem}
    An element \(p \in \mathcal{E}^{'k}(\mathcal{V}_{\mathcal{H}})\) that satisfies the Rockland condition induces a class in \(K_0(C^*(\mathcal{V}_{\mathcal{H}}))\) that only depends on its image in \(\Sigma^k_H(X|B)\).
\end{lem}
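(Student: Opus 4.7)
\emph{Proof plan.} The strategy is to use the Rockland condition to produce a parametrix for \(p\) and then extract a \(K_0\)-class from the resulting Fredholm pair via the classical Milnor almost-idempotent construction. First, I would invoke the Rockland condition to obtain \(q \in \mathcal{E}^{'-k}(\mathcal{V}_{\mathcal{H}})\) such that
\[r_0 := 1 - qp, \qquad r_1 := 1 - pq\]
both lie in \(C^{\infty}_c(\mathcal{V}_{\mathcal{H}}) \subset C^*(\mathcal{V}_{\mathcal{H}})\). A direct computation then gives the shifting identities \(q r_1 = r_0 q\) and \(r_1 p = p r_0\), which are the key technical tool: they allow every product containing a factor of \(p\) or \(q\) --- neither of which need belong to \(C^*(\mathcal{V}_{\mathcal{H}})\) --- to be rewritten as a convolution by a compactly supported smooth kernel.

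Applying the standard Milnor formula to the Fredholm pair \((p,q)\) then produces an almost-idempotent \(e(p,q) \in M_2\bigl(\widetilde{C^*(\mathcal{V}_{\mathcal{H}})}\bigr)\) whose off-diagonal entries, rewritten via the shifting identities, sit inside \(C^{\infty}_c(\mathcal{V}_{\mathcal{H}})\) and whose diagonal entries lie in \(\widetilde{C^*(\mathcal{V}_{\mathcal{H}})}\). A routine manipulation, using \(r_0 qp = r_0 - r_0^2\) and its companion relations, confirms that \(e(p,q)^2 = e(p,q)\); one then sets
\[[p] := [e(p,q)] - \bigl[\mathrm{diag}(0,1)\bigr] \in K_0\bigl(C^*(\mathcal{V}_{\mathcal{H}})\bigr).\]

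For independence of the choice of parametrix, I would observe that any two parametrices \(q,q'\) of the same \(p\) satisfy \(q - q' = q r_1' - r_0 q' \in C^{\infty}_c(\mathcal{V}_{\mathcal{H}})\), so the affine path \(q_t = (1-t)q + tq'\) remains a family of parametrices with defects depending continuously on \(t\); substituting it into \(e(p,\cdot)\) yields a norm-continuous path of idempotents in \(M_2(\widetilde{C^*(\mathcal{V}_{\mathcal{H}})})\) and hence equality of the \(K_0\)-classes. Independence of the lift \(p\) modulo \(C^{\infty}_c(\mathcal{V}_{\mathcal{H}})\) is then obtained by an entirely analogous straight-line homotopy \(p_t = p + t s\) with \(s \in C^{\infty}_c(\mathcal{V}_{\mathcal{H}})\), along which a single \(q\) continues to be a parametrix up to smoothing.

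The main obstacle is the order bookkeeping: when \(k \neq 0\), neither \(p\) nor \(q\) belongs a priori to \(C^*(\mathcal{V}_{\mathcal{H}})\) --- one is at best a multiplier of it, while the other has strictly positive order --- so the only way to guarantee that all entries of \(e(p,q)\) live in \(\widetilde{C^*(\mathcal{V}_{\mathcal{H}})}\) rather than in some larger algebra of Heisenberg-type distributions is to systematically absorb bad-order factors into \(C^{\infty}_c\)-remainders via the identities \(p r_0 = r_1 p\) and \(r_0 q = q r_1\) before closing in the \(C^*\)-norm. Once this bookkeeping is handled, the rest of the argument is the standard Milnor index-class machinery.
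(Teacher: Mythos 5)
Your proof is correct, but it takes a more hands-on algebraic route than the paper. The paper uses the parametrix to replace \(p\) by an order-zero representative with \(p^2-1\) smoothing; this representative is then a bounded multiplier of \(C^*(\mathcal{V}_{\mathcal{H}})\) and directly yields a Kasparov \((\mathbb{C},C^*(\mathcal{V}_{\mathcal{H}}))\)-cycle, i.e.\ the same mechanism as Lemma \ref{Lemma: Kasparov cycle}, which is why the paper's argument is essentially a one-liner. You instead keep \(p\) at order \(k\) and feed the pair \((p,q)\) into the Milnor index idempotent, using the shifting identities \(r_1p=pr_0\) and \(qr_1=r_0q\) to check that every entry of \(e(p,q)\) lies in \(\mathbb{C}+C^{\infty}_c(\mathcal{V}_{\mathcal{H}})\), and your affine homotopies in \(q\) and in \(p\bmod C^{\infty}_c(\mathcal{V}_{\mathcal{H}})\) spell out the well-definedness that the paper dispatches in one sentence. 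Both arguments ultimately rest on the same two facts, which you should state explicitly since your order bookkeeping hinges on them: the Rockland condition provides a two-sided inverse modulo \(C^{\infty}_c(\mathcal{V}_{\mathcal{H}})\), and the convolution of any element of \(\mathcal{E}^{'k}(\mathcal{V}_{\mathcal{H}})\) with a compactly supported smooth kernel is again in \(C^{\infty}_c(\mathcal{V}_{\mathcal{H}})\) (the analogue of \(\Psi^{\bullet}\circ\Psi^{-\infty}\subset\Psi^{-\infty}\)). What your route buys is an elementary, purely \(\K_0\)-theoretic construction with no order reduction and no Hilbert-module language, plus a fully explicit independence proof; what the paper's route buys is brevity and uniformity with the Kasparov cycles it needs anyway in Lemma \ref{Lemma: Kasparov cycle} and Theorem \ref{Theorem: Op behaviour and commutation}. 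One cosmetic point: with the standard formula \(e(p,q)=Z\,\mathrm{diag}(1,0)\,Z^{-1}\), the idempotent is congruent to \(\mathrm{diag}(1,0)\) modulo \(M_2(C^{\infty}_c(\mathcal{V}_{\mathcal{H}}))\), so the class should read \([e(p,q)]-[\mathrm{diag}(1,0)]\) (or fix the opposite convention consistently).
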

\begin{proof}
    The proof is identical to the more complicated Lemma \ref{Lemma: Kasparov cycle} below. Using a parametrix, we may replace \(p\) by another distribution of order \(0\), and that satisfies \(p^2 -1 \in C^{\infty}(\mathcal{V}_{\mathcal{H}})\). Since this new \(p\) is of order 0, it defines a bounded multiplier of \(C^*(\mathcal{V}_{\mathcal{H}})\). We then get a Kasparov cycle. This cycle does not change if we replace \(p\) by \(p + f\) where \(f \in C^{\infty}_c(\mathcal{V}_{\mathcal{H}})\) as this change does not affect the parametrix. Therefore the class in \(\K_0(C^*(\mathcal{V}_{\mathcal{H}}))\) only depends on the principal symbol determined by \(p\).
\end{proof}

We now give a topological interpretation of this symbol, following \cite{BaumvanErp2014}.

Recall from Section \ref{Section: Heisenberg Calculus} that a principal symbol \(\sigma \in \Sigma^k_H(X|B)\) has 3 components:
\begin{itemize}
    \item \(\sigma_{\pm}\), both taking values in the family of infinite dimensional representations of the osculating groups (divided in a positive and negative parts by the co-orientation).
    \item the equatorial symbol \(\sigma_0\), taking values in the characters of the osculating groups.
\end{itemize}

This dichotomy in the representations defines an exact sequence:

\[\xymatrix{0  \ar[r] & I_{\mathcal{H}} \ar[r] & C^*(\mathcal{V}_{\mathcal{H}}) \ar[r] & C_0(\mathcal{H}^*) \ar[r] & 0.}\]

The algebra \(I_{\mathcal{H}}\) is isomorphic to two copies of \(C_0(X \times \mathbb{R}^*_+,\mathcal{K})\).

These algebras of compact operators can be made explicit. Take a complex structure \(J \colon \mathcal{H} \to \mathcal{H}\) compatible with the symplectic form \(\mathrm{d}\Theta\) (where \(\Theta\) is the family of contact forms on the contact fibration). We then have a canonical identification with the algebras of continuous sections:
\begin{align*} 
I_{\mathcal{H}} \cong C(\mathbb{R}^*_-)\otimes \mathcal{K}(\mathcal{F}^+(\mathcal{H}^{0,1})) \oplus C(\mathbb{R}^*_+)\otimes \mathcal{K}(\mathcal{F}^+(\mathcal{H}^{1,0})).
\end{align*}

\begin{rem}
    In the above identification, we see \(\mathcal{K}(\mathcal{F}^+(\mathcal{H}^{0,1}))\) as the sections of the bundle of compact operators over \(X\) associated to the Hilbert bundle \(\mathcal{F}^+(\mathcal{H}^{0,1})\). Therefore the algebra \(\mathcal{K}(\mathcal{F}^+(\mathcal{H}^{0,1}))\) is Morita equivalent to \(C(X)\). The same goes for \(\mathcal{K}(\mathcal{F}^+(\mathcal{H}^{1,0}))\).
\end{rem}

If \(\sigma \in \Sigma^k_H(X|B;E,F)\) where \(E,F\to X\) are complex vector bundles, we get vector bundle morphisms:
\begin{align*}
\sigma_{+} &\colon E \otimes \mathcal{F}^+(\mathcal{H}^{1,0}) \to  F \otimes \mathcal{F}^+(\mathcal{H}^{1,0}) \\
\sigma_{-} &\colon E \otimes \mathcal{F}^+(\mathcal{H}^{0,1}) \to  F \otimes \mathcal{F}^+(\mathcal{H}^{0,1}) \\
\sigma_0 &\colon \pi^*E \to \pi^*F, \pi \colon \mathbb{S}^*\mathcal{H} \to X.
\end{align*}
The Rockland condition can be equivalently stated as the invertibility of these 3 maps. The map \(\sigma_0\) is actually homotopic to the pullback of an isomorphism of vector bundles \(E \to F\) which we still denote by \(\sigma_0\). It therefore makes sense to define the isomorphisms of vector bundles over \(X\):
\begin{align*}
    \sigma_0^{-1}\circ \sigma_+ &\colon E \otimes \mathcal{F}^+(\mathcal{H}^{1,0}) \to E \otimes \mathcal{F}^+(\mathcal{H}^{1,0}) \\
    \sigma_0^{-1}\circ \sigma_- &\colon E \otimes \mathcal{F}^+(\mathcal{H}^{0,1}) \to E \otimes \mathcal{F}^+(\mathcal{H}^{0,1}).
\end{align*}

These two automorphisms of vector bundles thus define classes:
\[[\sigma_0^{-1}\circ \sigma_{\pm} ] \in \K^1(X).\]

\begin{rem}
    Technically the bundle \(\mathcal{F}^+(\mathcal{H}^{1,0})\) has infinite rank, so won't define a class in K-theory. However, it appears as a limit of finite rank subbundles. We may therefore see the element \([\sigma_0^{-1}\circ \sigma_{-} ] \in K^1(X)\) as the stable limit of the morphisms of the bundle
    \[\bigoplus_{k = 0}^N\operatorname{Sym}^k(\mathcal{H}^{0,1}) \otimes E\]
    given by compression of \(\sigma_0^{-1}\circ \sigma_{-}\), for \(N\) big enough. The same goes for \([\sigma_0^{-1}\circ\sigma_+]\).
\end{rem}

We will relate these classes to the class of \(\sigma\) in Subsection \ref{Subsection: Clutching}. In preparation of that, we show the following:

\begin{prop}\label{Proposition: Lift symbol class}
    The map \(K_0(I_{\mathcal{H}}) \to K_0(C^*(\mathcal{V}_{\mathcal{H}}))\) is surjective.
\end{prop}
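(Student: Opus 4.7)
I will use the six-term exact sequence in K-theory applied to the short exact sequence
\[0 \to I_{\mathcal{H}} \to C^*(\mathcal{V}_{\mathcal{H}}) \to C_0(\mathcal{H}^*) \to 0.\]
By exactness, surjectivity of the map $\K_0(I_{\mathcal{H}}) \to \K_0(C^*(\mathcal{V}_{\mathcal{H}}))$ is equivalent to injectivity of the boundary homomorphism
\[\partial \colon \K_0(C_0(\mathcal{H}^*)) \to \K_1(I_{\mathcal{H}}).\]

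The first step is to identify both K-groups explicitly. Since $\mathcal{H}$ admits a compatible complex structure $J$, the bundle $\mathcal{H}^* \to X$ is a complex vector bundle of rank $n$, so the Thom isomorphism yields $\K_0(C_0(\mathcal{H}^*)) \cong \K^0(X)$. For the other group, the displayed identification
\[I_{\mathcal{H}} \cong C_0(\mathbb{R}^*_-)\otimes \mathcal{K}(\mathcal{F}^+(\mathcal{H}^{0,1})) \,\oplus\, C_0(\mathbb{R}^*_+)\otimes \mathcal{K}(\mathcal{F}^+(\mathcal{H}^{1,0}))\]
combines with the Morita equivalences $\mathcal{K}(\mathcal{F}^+(\mathcal{H}^{\bullet,\bullet})) \sim_M C(X)$ and the suspension isomorphism for $C_0(\mathbb{R}^*_\pm)$ to give $\K_1(I_{\mathcal{H}}) \cong \K^0(X) \oplus \K^0(X)$.

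The second step is to compute $\partial$ under these identifications and show it is the anti-diagonal $[E] \mapsto ([E],-[E])$. The extension is constructed fiberwise from the analogous Heisenberg-group extension $0 \to C_0(\mathbb{R}^*_-,\mathcal{K})\oplus C_0(\mathbb{R}^*_+,\mathcal{K}) \to C^*(\mathrm{Heis}_n) \to C_0(\mathbb{R}^{2n}) \to 0$ over each $x \in X$. In the single-point case the six-term exact sequence collapses (using Connes--Thom to compute $\K_*(C^*(\mathrm{Heis}_n))$) to
\[0 \to \mathbb{Z} \xrightarrow{\partial} \mathbb{Z}^2 \to \mathbb{Z} \to 0,\]
which forces $\partial$ to be the anti-diagonal embedding, up to sign. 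Naturality of the boundary map under evaluation at points of $X$, combined with a local trivialization argument (valid because $\mathcal{V}_{\mathcal{H}}$ is locally a trivial bundle of Heisenberg Lie groups) and compatibility with the Thom and Morita isomorphisms, propagates this pointwise formula to the global anti-diagonal map $\K^0(X) \to \K^0(X)\oplus \K^0(X)$. Since the anti-diagonal is injective, $\partial$ is injective and the claim follows.

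The main obstacle will be making the reduction of the boundary computation to the pointwise case fully rigorous, i.e.\ checking that the Thom isomorphism, Morita equivalence, and suspension used to identify the groups are all compatible with $\partial$. A cleaner alternative route, which sidesteps this bookkeeping, is to prove directly via Connes--Thom that the inclusion of the single summand $C_0(\mathbb{R}^*_+)\otimes \mathcal{K}(\mathcal{F}^+(\mathcal{H}^{1,0})) \hookrightarrow C^*(\mathcal{V}_{\mathcal{H}})$ already induces an isomorphism on $\K_0$, from which surjectivity of the map in the proposition is immediate.
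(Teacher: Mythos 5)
Your reduction via the six-term exact sequence is correct: surjectivity of \(\K_0(I_{\mathcal{H}}) \to \K_0(C^*(\mathcal{V}_{\mathcal{H}}))\) is equivalent to injectivity of \(\partial \colon \K_0(C_0(\mathcal{H}^*)) \to \K_1(I_{\mathcal{H}})\), and your identifications of these groups with \(\K^0(X)\) and \(\K^0(X)\oplus\K^0(X)\) are fine. The gap is the step that propagates the pointwise computation to the global boundary map. First, even over a single point the exact sequence \(0\to\mathbb{Z}\to\mathbb{Z}^2\to\mathbb{Z}\to 0\) only forces \(\partial\) to be injective with primitive image; it does not force the anti-diagonal form, which would require an actual computation of the Toeplitz/Bargmann--Fock index map of the half-extensions. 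Second, and more seriously, fiberwise data cannot control the global \(\partial\): if \(\partial(\xi)=0\) then naturality only gives \(\partial_x(\xi|_x)=0\), hence \(\xi|_x=0\) for every \(x\in X\), and this does not imply \(\xi=0\) (every reduced class in \(\K^0(X)\) restricts to zero at each point); a local trivialization argument does not patch either, since boundary maps and K-theory classes are not local data. Moreover, the global identifications you invoke are twisted: the two copies of \(\K^0(X)\) arise through the Fock-bundle Morita equivalences, and comparing with \(\K_0(C_0(\mathcal{H}^*))\) involves the Thom classes of \(\mathcal{H}^{1,0}\) and \(\mathcal{H}^{0,1}\), which differ by \((-1)^n\); so even granting a global formula, it is not simply the anti-diagonal, and nothing in your sketch computes it.

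What is actually needed --- and what the paper does instead --- is a comparison of the Heisenberg extension with the commutative extension \(0\to C_0(\mathcal{V}^*\setminus\mathcal{H}^*)\to C_0(\mathcal{V}^*)\to C_0(\mathcal{H}^*)\to 0\): the Connes--Thom element \(\Psi\) intertwines the two extensions (this is the content of the diagram of extensions in Subsection~\ref{Subsection: Choosing an operator}), and under the resulting isomorphisms (Morita plus the Thom classes \(\tau^{\pm}\)) the map in the Proposition becomes the map \(\K^0(\mathcal{V}^*\setminus\mathcal{H}^*)\to\K^0(\mathcal{V}^*)\) induced by the open inclusion, whose surjectivity is an elementary topological fact. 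Your ``cleaner alternative'' at the end is essentially this route, but the phrase ``directly via Connes--Thom'' conceals exactly the missing ingredient: the compatibility of the Connes--Thom isomorphism with the ideal \(I_{\mathcal{H}}\), i.e.\ with the two extensions, which is the nontrivial point that has to be proved before either your boundary-map computation or the claimed \(\K_0\)-isomorphism for a single summand can be justified.
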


\begin{proof}
    First, the Morita equivalence gives us an isomorphism 
    \[\K_0(I_{\mathcal{H}}) \cong \K_0(X \times \mathbb{R}^*_+) \oplus \K_0(X\times \mathbb{R}^*_-).\]
    Let \(\tau^+,\tau^-\in \K^0(X)\) be the Thom classes of the complex bundles \(\mathcal{H}^{1,0}\) and \(\mathcal{H}^{0,1}\) respectively. We then get an isomorphism
    \[\K^0(X) \overset{\cup \tau^+}{\rightarrow} \K^0(\mathcal{H}^{1,0}) \cong K^0(\mathcal{H}^*),\]
    and likewise for \(\tau_-\). Adding the two classes obtained from one in \(\K_0(I_{\mathcal{H}})\) defines an isomorphism:
    \[\K_0(I_{\mathcal{H}}) \to \K^0(\mathcal{H}^*\times \mathbb{R}^{\times}).\]
    Now the inclusion of the open set \(\mathcal{H}^* \times \mathbb{R}^{\times} \subset \mathcal{V}^*\) identified with \(\mathcal{V}^* \setminus \mathcal{H}^*\) (\(\mathcal{H}^*\subset \mathcal{V}^*\) is a hyperplane sub-bundle) induces a map in K-theory. Since the inclusion of each of the two connected component is a homotopy equivalence, we get a surjective map:
    \[\K_0(I_{\mathcal{H}}) \overset{\sim}{\to} \K_0(\mathcal{V}^* \setminus\mathcal{H}^*) \twoheadrightarrow \K_0(\mathcal{V}^*).\]
    The following diagram then commutes:
    \[\xymatrix{\K_0(I_{\mathcal{H}}) \ar[r] \ar[d] & \K_0(C^*(\mathcal{V}_{\mathcal{H}})) \\
                \K^0(\mathcal{V}^* \setminus \mathcal{H}^*) \ar[r] & \K^0(\mathcal{V}^*) \ar[u]^{\Psi}}\] 
    The map \(\Psi\) is the Connes-Thom isomorphism defined in the next section (which will show why this diagramm commutes). In particular, since the vertical arrows are isomorphism and the bottom horizontal arrow is surjective, then so is the top horizontal one.
\end{proof}

\subsection{Choosing an operator}\label{Subsection: Choosing an operator}

Let \(X \to B\) be a contact fibration. In this subsection we construct maps:
\begin{align*}
    \Op &\colon \K^0(\mathcal{V}^*) \to \KK_0(X,B), \\
    \Op_H &\colon \K_0(C^*(\mathcal{V}_{\mathcal{H}})) \to \KK_0(X,B).
\end{align*}

These maps do the following: starting with an elliptic (or Rockland respectively) vertical symbol, we choose a family of operators having this symbol as principal symbol. These operators induce a class in \(\KK_0(X,B)\) which only depends on the class of the symbol in K-theory.

Every class in \(\K^0(\mathcal{V}^*)\) can be represented by a triplet \((\sigma,E,F)\) where \(\sigma\) is a vector bundle map:
\[\sigma \colon \pi^*E \to \pi^*F,\]
with \(\pi \colon \mathcal{V}^* \to X\) the projection map. Such a map has to be invertible outside of a compact subset (typically the zero section). For practical applications, \(\sigma\) may be homogeneous outside of that compact set (take a homogeneous symbol on \(\mathcal{V}^*\setminus X\) and extend it to \(\mathcal{V}^*\) by using a cutoff function near the zero section). Given such a K-theory cycle, we may construct a family of pseudodifferential operators:
\[P \colon C^{\infty}(X,E) \to C^{\infty}(X,F)\]
on the fibration \(X \to B\) with symbol \(\sigma\) (so that \(P\) is a family of elliptic operators on the fibers). The class \([P] \in \KK_0(X,B)\) does only depend on the class \([(\sigma,E,F)] \in K^0(\mathcal{V}^*)\).

This correspondence is well explained with the adiabatic groupoid:
\[G^{ad} = X\times_BX \times (0;1] \sqcup \mathcal{V} \times \{0\}.\]
This groupoid is constructed as a deformation to the normal cone (see e.g. \cite{DebordSkandalis2014} and has thus a natural structure of Lie groupoid. Consider the map 
\[\ev_0 \colon C^*(G^{ad}) \to C^* (\mathcal{V}) = C_0(\mathcal{V}^*).\]
It induces the exact sequence:
\[\xymatrix{0 \ar[r] & C^*(X\times_B X) \otimes C_0((0;1]) \ar[r] & C^*(G^{ad}) \ar[r] & C_0(\mathcal{V}^*) \ar[r] & 0.}\]
The kernel is contractible and the quotient nuclear so the element 
\[\ev_0 \in \KK_0(C^*(G^{ad}),C_0(\mathcal{V}^*))\] 
is invertible. 
Recall that if \(A\) is a \(C(X)\)-algebra, there is a group homomorphism:
\[\alpha_X \colon \KK_*(\mathbb{C},A) \to \KK_*(C(X),A),\]
obtained by adding the natural representation of \(C(X)\) as central multipliers of \(A\) in any Kasparov cycle. This map is natural in \(A\). A particular case is for \(A = C(M)\) where \(M\) is a \(X\)-space, or more generally to \(A = C^*(G)\) where \(G\) is a groupoid and its units \(G^{(0)}\) are an \(X\)-space. This is the case with \(G^{ad}\) since its units are \(X\times [0;1]\).
We then obtain:
\[\Op = \alpha_X(\ev_0^{-1} \otimes_{\mathcal{V}^*} -) \otimes_{C^*(G^{ad})} \ev_1 \colon\K^0(\mathcal{V^*}) \to \KK_0(X,B),\]
where we use the Morita equivalence \(X\times_B X \sim B\) to replace \(C^*(X\times_B X) \) by \(C(B)\).

The construction of \(\Op_H\) is done in a similar fashion. This time, since the algebroid \(\mathcal{V}\) is filtered by \(\mathcal{H} \subset \mathcal{V}\), we may consider the filtered adiabatic groupoid \cite[Section 12]{vanErpYuncken2019}:
\[G^{ad}_H = X\times_B X \times (0;1] \sqcup \mathcal{V}_{\mathcal{H}} \times \{0\} \rightrightarrows X \times [0;1].\]

The bundle of Heisenberg groups \(\mathcal{V}_{\mathcal{H}}\) is an amenable groupoid so its \(C^*\)-algebra is still nuclear and the corresponding element:
\[\ev_{0,H} \in \KK_0(C^* (G^{ad}_H),C^*(\mathcal{V}_{\mathcal{H}})),\]
is still invertible. We define analogously:

\[\Op_H = \alpha_X(\ev_{0,H}^{-1} \otimes_{C^*(\mathcal{V}_{\mathcal{H}})} -) \otimes_{C^*(G^{ad}_H)} \ev_{1,H} \colon\K_0(C^*(\mathcal{V}_{\mathcal{H}})) \to \KK_0(X,B),\]

To relate these two constructions, note that the Connes-Thom isomorphism induces an isomorphism:
\[\Psi \colon \K^0(\mathcal{V}^*) \to \K_0(C^*(\mathcal{V}_{\mathcal{H}})).\]
This can be seen by applying directly Connes-Thom isomorphims inductively (a nilpotent group can be obtained as a iteration of crossed products by \(\mathbb{R}\)). We construct it in an analogous way as \(\Op\) and \(\Op_H\) above, following \cite{Mohsen2022}. Consider the adiabatic deformation of \(\mathcal{V}_H\) seen as a groupoid over \(M\). By fixing an identification between the Lie algebroid of \(\mathcal{V}_H\) (the vector bundle \(\mathcal{H}\oplus \faktor{\mathcal{V}}{\mathcal{H}}\)) and \(\mathcal{V}\), we obtain:
\[\mathcal{V}_{\mathcal{H}}^{ad} = \mathcal{V}_{\mathcal{H}} \times (0;1] \sqcup \mathcal{V} \times \{0\} \rightrightarrows M \times [0;1].\]

We then have:
\[\Psi = -\otimes_{\mathcal{V}^*} \ev_0^{-1}\otimes_{C^*(\mathcal{V}_{\mathcal{H}}^{ad})} \ev_1 \colon \K^0(\mathcal{V}^*) \to \K_0(C^*(\mathcal{V}_{\mathcal{H}})).\]
This map coincides with the Connes-Thom map used inductively and is thus an isomorphism.

\begin{rem}
    One can also describe the inverse map as follows. Consider \(\mathcal{V}\) and its filtration. Its filtered adiabatic groupoid \(\mathcal{V}_H^{ad}\) induces a map:
    \[-\otimes_{C^*(\mathcal{V}_{\mathcal{H}})} \ev_0^{-1}\otimes_{C^*(\mathcal{V}_{{H}}^{ad})} \ev_1 \colon \K_0(C^*(\mathcal{V}_{\mathcal{H}})) \to \K^0(\mathcal{V}^*).\]
    This map turns out to be the inverse of \(\Psi\) (again, this is a consequence of the fact that it coincides with an iteration of the Connes-Thom isomorphism).
\end{rem}

\begin{rem}
    Both \(C^*(\mathcal{V}_{\mathcal{H}})\) and \(C_0(\mathcal{V}^*)\) have a quotient isomorphic to \(C_0(\mathcal{H}^*)\). The map \(\Psi\) induces the following diagram (the dotted arrows are maps in the category \(\KK\) or \(\KK^X\), the horizontal ones are morphisms of \(C^*\)-algebras):
    \[\xymatrix{0 \ar[r] & I_{\mathcal{H}} \ar[r] & C^*(\mathcal{V}_{\mathcal{H}}) \ar[r]  & C_0(\mathcal{H}^*) \ar[r]  & 0 \\
    0 \ar[r] & C_0(\mathcal{V}^*\setminus \mathcal{H}^*) \ar[r] \ar@{.>}[u] & C_0(\mathcal{V}^*) \ar@{.>}[u]^{\Psi} \ar[r] & C_0(\mathcal{H}^*) \ar[r] \ar@{.>}[u] & 0.}\]
    The K-theory map on the left is the inverse to isomorphism used in the proof of Proposition \ref{Proposition: Lift symbol class}. This explains the commutativity of the diagram in that proof.
\end{rem}

\begin{lem}\label{Lemma: Kasparov cycle}
    Let \(P \in \Psi^0_{(H)}(X|B;E,F)\) be a family of self-adjoint elliptic or Rockland operators. Consider \(E \oplus F\) as a \(\mathbb{Z}/2\)-graded vector bundle. Let \(\widetilde{P} = \begin{pmatrix}0 & Q \\ P & 0\end{pmatrix}\) with \(Q \in \Psi^0(X|B;F,E)\) a parametrix for \(P\). Let \(L^2(X|B;E,F)\) be the \(\mathbb{Z}/2\)-graded Hilbert bundle over \(B\) obtained by completion of \((C^{\infty}(X_b,E\oplus F))_{b\in B}\). Let \(C(X)\) act on this Hilbert bundle by multiplication operators on the fibers, denoted by \(\mathcal{M}\). The triple \((L^2(X|B;E,F), \mathcal{M},\widetilde{P})\) is a Kasparov cycle. The same is true if \(P \in \Psi^k(X|B;E,F)\) if we then replace \(P\) by \(P(1+{P}^*{P})^{-1/2}\) instead. 
\end{lem}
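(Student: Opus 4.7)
The plan is to verify the defining properties of a Kasparov $(C(X),C(B))$-bimodule for the triple $(L^2(X|B;E,F),\mathcal{M},\widetilde{P})$: namely, that $L^2(X|B;E,F)$ is a $\mathbb{Z}/2$-graded Hilbert $C(B)$-module carrying a graded $*$-representation of $C(X)$, that $\widetilde{P}$ is an odd bounded adjointable operator, and that the three compactness conditions $\widetilde{P}-\widetilde{P}^*\in\mathcal{K}$, $\widetilde{P}^2-1\in\mathcal{K}$, and $[\widetilde{P},\mathcal{M}_f]\in\mathcal{K}$ for every $f\in C(X)$ all hold.

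First I would set up the module structure. Since $X\to B$ is a smooth fibration with compact fibers, the fiberwise $L^2$ pairing gives a $C(B)$-valued inner product on sections of $E\oplus F$, turning $L^2(X|B;E,F)$ into a $\mathbb{Z}/2$-graded Hilbert $C(B)$-module, the grading being induced by the decomposition $E\oplus F$. Pointwise multiplication provides a grading-preserving $*$-representation $\mathcal{M}:C(X)\to\mathcal{L}(L^2(X|B;E,F))$. The operator $\widetilde{P}$ is off-diagonal, hence odd, and bounded: this follows from the fact, recalled in Section \ref{Section: Heisenberg Calculus}, that families of order-zero operators in the Heisenberg calculus extend to bounded adjointable operators on the Hilbert module $L^2(X|B)$, and more generally on $L^2(X|B;E,F)$ after tensoring with the bundle.

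Next I would verify the compactness conditions, all of which reduce to the identification $\mathcal{K}(L^2(X|B))\cong C^*(X\times_B X)$ recalled in Section \ref{Section: Heisenberg Calculus}, together with the inclusion $\Psi_H^{-1}(X|B)\subset C^*(X\times_B X)$. By definition of a parametrix, $QP-\mathrm{Id}$ and $PQ-\mathrm{Id}$ lie in $\Psi_H^{-1}(X|B)$, so $\widetilde{P}^2-1=\mathrm{diag}(QP-\mathrm{Id},PQ-\mathrm{Id})$ is compact. For self-adjointness, the formal adjoint $P^*$ is another parametrix for $P$ modulo $\Psi_H^{-1}$; replacing $Q$ by $\tfrac{1}{2}(Q+P^*)$ (which does not affect any of the previous considerations) makes $\widetilde{P}-\widetilde{P}^*$ live in $\Psi_H^{-1}(X|B;E\oplus F)\subset\mathcal{K}$. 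For $f\in C^\infty(X)$ the commutator $[\widetilde{P},\mathcal{M}_f]$ is an order $-1$ Heisenberg family, hence compact. For a general $f\in C(X)$ I would approximate it uniformly by smooth functions $f_n$, using $\|[\widetilde{P},\mathcal{M}_{f_n}-\mathcal{M}_f]\|\leq 2\|\widetilde{P}\|\,\|f_n-f\|_\infty$, so that $[\widetilde{P},\mathcal{M}_f]$ is a norm limit of compacts.

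For the last sentence of the lemma, the bounded transform $P(1+P^*P)^{-1/2}$ of a higher-order Rockland (or classically elliptic) family is of Heisenberg order zero, since $1+P^*P$ is a positive Rockland operator of order $2k$ whose square root and inverse powers can be constructed within the Heisenberg calculus (for instance via holomorphic or heat-kernel functional calculus, which preserves the calculus for Rockland symbols). Once this is in place, the same verification as above applies verbatim, with parametrix $P^*(1+PP^*)^{-1/2}$. The main obstacle I expect is the promotion of the commutator condition from smooth to merely continuous $f$: one must ensure that the Heisenberg calculus interacts well enough with uniform limits of multiplication operators for compactness to pass to the limit, which is precisely where the identification $\Psi_H^{-1}(X|B)\subset\mathcal{K}(L^2(X|B))$ and the norm-closedness of $\mathcal{K}$ do the essential work.
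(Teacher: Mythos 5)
Your overall route is the same as the paper's: reduce to an order-zero family, use that families of negative Heisenberg order act as compact operators on the Hilbert module \(L^2(X|B)\), get \(\widetilde{P}^2-1\in\mathcal{K}\) from the parametrix property, obtain compactness of \([\widetilde{P},\mathcal{M}(f)]\) for smooth \(f\) from the drop in order, and pass to continuous \(f\) by density. The only presentational difference in this part is that you assert the order \(-1\) property of the commutator, whereas the paper derives it in one line: \(\sigma^0_H(\mathcal{M}(f))\) acts as the scalar \(f(x)\mathrm{Id}\) in every representation of \(\mathcal{V}_{\mathcal{H},x}\), so the principal symbol of the commutator vanishes; you should include that remark (or a reference) rather than state the order drop as obvious.

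The one step that fails as written is your treatment of \(\widetilde{P}-\widetilde{P}^*\). You claim that the formal adjoint \(P^*\) is another parametrix for \(P\) modulo \(\Psi^{-1}_H\) and then replace \(Q\) by \(\tfrac{1}{2}(Q+P^*)\). This is false in general: \(P^*\) is a parametrix of \(P\) exactly when \(P^*P-1\) and \(PP^*-1\) have negative order, i.e. when the principal symbol of \(P\) is unitary; for a generic invertible (elliptic or Rockland) symbol it is not (already \(P=2\,\mathrm{Id}\) is a counterexample), and then \(Q-P^*\) is not compact, so your averaging does not make \(\widetilde{P}-\widetilde{P}^*\) compact. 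The repair is standard: first normalize the symbol to be unitary (polar decomposition in the \(C^*\)-closure \(\Sigma_H(X|B)\), which does not change the relevant classes and allows a choice of \(Q\) with \(Q\equiv P^*\) modulo \(\Psi^{-1}_H\)), or appeal to the standard normalization results for modules satisfying only \((\widetilde{P}^2-1)\mathcal{M}(f)\in\mathcal{K}\) and \([\widetilde{P},\mathcal{M}(f)]\in\mathcal{K}\), which determine a Kasparov cycle after a canonical compact perturbation; note that the paper's own proof also only verifies these two conditions. Finally, your reduction of the order-\(k\) case through complex powers of \(1+P^*P\) inside the Heisenberg calculus is legitimate but heavier than necessary: one can instead view \(P(1+P^*P)^{-1/2}\) as the bounded transform of a regular operator on the Hilbert module and check the compactness of the commutators directly, which is closer in spirit to the paper's terse "reduces to order \(0\)".
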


\begin{proof}
    The proof are the same for elliptic and Rockland, we prove the lemma in the Rockland case. The case of higher order operators reduces to order 0. Moreover, replacing \(P\) by \(\widetilde{P}\), we may assume that \(E = F\) and \(P^2-1\) is has order \(-1\). But then, we have \(P^2-1 \in \mathcal{K}(L^2(X|B;E))\) and so does \(\mathcal{M}(f)(P^2-1)\) for every \(f \in C(X)\). Finally, if \(f \in C^{\infty}(X)\) then \([P,\mathcal{M}(f)]\in \Psi^0_H(X|B;E)\). Its principal symbol vanishes:
    \[\sigma^0_H([P,\mathcal{M}(f)]) = [\sigma^0_H(P),\sigma^0_H(\mathcal{M}(f))],\]
    and \(\sigma^0_H(\mathcal{M}(f))\) is the operator by \(f(x)\mathrm{Id}_{\mathcal{H}_{\pi}}\) in every representation of \(\pi\) \(\mathcal{V}_{\mathcal{H},x}, x \in X\), so the commutator vanishes. Therefore \([P,\mathcal{M}(f)]\) has order \(-1\) and is then compact on each fiber.
\end{proof}

\begin{thm}\label{Theorem: Op behaviour and commutation}
    If \(P \in \Psi^k_{(H)}(X|B;E,F), k \geq 0\) is a family of elliptic or Rockland operators, and \(\sigma_{(H)}(P)\) its principal symbol then we have the relation:
    \[\Op_{(H)}([\sigma_{(H)}(P)]) = [P] \in \KK_0(X,B).\]
    Moreover, the following diagram commutes:
    \[\begin{tikzcd}
    \K_0(C^*(\mathcal{V}_{\mathcal{H}})) \arrow[rrd, "\Op_H"]            &  &         \\
                                                &  & \KK_0(X,B) \\
    \K^0(\mathcal{V}^*) \arrow[uu, "\Psi"] \arrow[rru, "\Op"'] &  &        
    \end{tikzcd}\]
\end{thm}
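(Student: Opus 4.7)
My plan is to first establish the equality $\Op_{(H)}([\sigma(P)]) = [P]$ using the adiabatic groupoid that underlies the definition of $\Op_{(H)}$, and then deduce the commutativity of the triangle from a two-parameter deformation argument that realizes $\Psi$ in the same framework.

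First I would reduce to the case $k = 0$. When $k > 0$, Lemma \ref{Lemma: Kasparov cycle} allows me to replace $P$ by its bounded transform $P(1+P^*P)^{-1/2}$, which defines the same $\KK$-class as $P$ and whose principal symbol represents the same class in $\K_0(C^*(\mathcal{V}_{\mathcal{H}}))$ (respectively $\K^0(\mathcal{V}^*)$) as $[\sigma(P)]$. I may thus assume $P$ is self-adjoint and of order $0$. The core of the proof is then the construction, on the whole (filtered) adiabatic groupoid, of an order $0$ Heisenberg operator $P^{ad}$ with $P^{ad}_{|t=1} = P$ and $P^{ad}_{|t=0}$ equal to convolution by (a lift of) the principal symbol of $P$ on $\mathcal{V}_{\mathcal{H}}$. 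Such a family exists via the Heisenberg calculus on filtered adiabatic groupoids from \cite{vanErpYuncken2019}, adapted to our fibration setting over $B$: any distributional lift of $\sigma(P)$ to $G^{ad}_H$ which agrees with $P$ at $t = 1$ and which varies smoothly in $t$ provides a candidate, and the Rockland condition is preserved throughout the deformation. The Hilbert $C^*(G^{ad}_H)$-module obtained from fiberwise $L^2$-sections, equipped with the natural $C(X)$-action by multiplication and with the operator $P^{ad}$, defines a Kasparov cycle
\[[P^{ad}] \in \KK_0(C(X), C^*(G^{ad}_H)),\]
whose boundary evaluations are
\[[P^{ad}] \otimes_{C^*(G^{ad}_H)} \ev_{1,H} = [P] \in \KK_0(X,B), \quad [P^{ad}] \otimes_{C^*(G^{ad}_H)} \ev_{0,H} = \alpha_X([\sigma(P)]).\]
The definition of $\Op_H$ then gives directly $\Op_H([\sigma(P)]) = [P]$, and the elliptic case proceeds identically with $G^{ad}$ in place of $G^{ad}_H$.

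To prove the commutativity of the triangle I would construct a two-parameter Lie groupoid $\mathbb{G}$ over $X \times [0;1]_s \times [0;1]_t$, obtained as the filtered adiabatic deformation of $G^{ad}$ along the filtration $\mathcal{H} \subset \mathcal{V}$ applied at the face $t = 0$. Its $t = 0$ face is isomorphic to the deformation $\mathcal{V}_{\mathcal{H}}^{ad}$ in the parameter $s$, with corner values $\mathcal{V}$ at $s = 0$ and $\mathcal{V}_{\mathcal{H}}$ at $s = 1$; its $s = 0$ face recovers $G^{ad}$; and its $s = 1$ face recovers $G^{ad}_H$. Each of the evaluations at the adiabatic endpoints of $\mathbb{G}$ induces a KK-equivalence. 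Comparing the evaluations along the two boundary paths from the corner $(s,t) = (0,0)$ (where the fiber is $\mathcal{V}$) to the face $t = 1$ (where the fiber is $X\times_B X$) yields $\Op$ along the $s = 0$ route and $\Op_H \circ \Psi$ along the route going first through $t = 0$ then through $s = 1$. By functoriality of the Kasparov product and invertibility of the various $\ev_0$ maps, the two routes produce the same element of $\KK_0(X,B)$.

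The main technical obstacle is the construction of the extension $P^{ad}$ on the adiabatic groupoid and the verification that it genuinely defines an order $0$ cycle in the family Heisenberg calculus of $G^{ad}_H$. This is a family version of the symbol interpretation underlying \cite{vanErpYuncken2019, DebordSkandalis2014, Mohsen2021Deformation}: the construction is coherent over $B$ through the standard partition of unity argument on local trivializations, but one must carefully verify smoothness across the deformation parameter $t$ and that the endpoint evaluations yield precisely $P$ and convolution by $\sigma(P)$. The double-deformation groupoid $\mathbb{G}$ is a similar but conceptually simpler construction that fits in the general framework of iterated deformation to the normal cone.
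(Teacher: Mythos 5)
Your proposal is correct and follows essentially the same route as the paper: the first identity is obtained by extending \(P\) (or its bounded transform) to a quasi-homogeneous operator on the filtered adiabatic groupoid \(G^{ad}_H\), forming the associated Kasparov cycle over \(C^*(G^{ad}_H)\) and evaluating at \(t=0,1\), exactly as the paper does with the distribution \(\mathbb{P}\) and the estimates of van Erp--Yuncken; the commutativity of the triangle is likewise proved in the paper via the double deformation \(\mathbb{G}=(G^{ad}_H)^{ad}\) over \([0;1]^2\), which is your two-parameter groupoid up to relabelling of the faces, with the same comparison of the two boundary routes using invertibility of the evaluation maps.
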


\begin{proof}
    The first part works in the same way for both operators we do it in the Rockland case (and for lightening the notations, with scalar operators). Let \(\mathbb{P} \in \mathcal{E}'_r(G^{ad}_H)\) be a quasi-homogeneous distribution as in \cite{vanErpYuncken2019} with \(\mathbb{P}_1 = P\). Then \(\mathbb{P}_0\) induces a K-theory class that coincides with the one of \(\sigma_H(P)\). Then \(\mathbb{P}\) induces a class:
    \[[\mathbb{P}] \in \KK_0(\mathbb{C},C^*(G^{ad}_H)).\]
    This class is constructed in the same way as the one of \(P\) in the previous lemma, but in family over \([0;1]\). The continuity at \(0\) comes from the estimates of \cite[Corollary 45]{vanErpYuncken2019}. Now we have \(\ev_{0,H}([\mathbb{P}]) = [\mathbb{P}_0] = [\sigma_H(P)]\). On the other hand we have \(\alpha_X([\mathbb{P}]) \otimes \ev_{1,H} = [P]\) which concludes the first part of the proof.

    The commutativity of the diagram is established by looking at the square given by the groupoid \(\mathbb{G} := (G_H^{ad})^{ad} \rightrightarrows M \times [0;1]^2\). If we consider coordinates \((s,t) \in [0;1]^2\) then we have that:
    \begin{itemize}
        \item The restriction \(\mathbb{G}_{s,\bullet}\) with \(s \neq 0\) is \(G^{ad}_H\).
        \item The restriction \(\mathbb{G}_{\bullet, t}\) with \(t \neq 0\) is \(G^{ad}\).
        \item The restriction \(\mathbb{G}_{0,\bullet}\) is identified with \(\mathcal{V} \times [0;1]\).
        \item The restriction \(\mathbb{G}_{\bullet, 0}\) is \(\mathcal{V}_{\mathcal{H}}^{ad}\).
    \end{itemize}
    Given the construction of \(\Op,\Op_H,\Psi\) using the rows and columns of this groupoid, we obtain the commutativity of the diagram.
\end{proof}

\begin{rem}\label{Remark: Not Poincaré duality}
    In non family case (so when \(B\) is a point), the map
    \[\Op \colon \K^0(T^*X) \to \K_0(X),\]
    is the Poincaré duality map in K-theory and is thus an isomorphism. This is not the case anymore in the bivariant case. Indeed, in the situation where \(X = M \times B\), we have \(\mathcal{V} = TM \times B\). Connes and Skandalis prove \cite[Corollary 3.11]{ConnesSkandalis1984} that the map \(\Op\) composed with \(\KK_0(X,B) \to \KK_0(M,B)\) (push-forward by the projection \(X \to M\)) is an isomorphism. Since the latter is in general not an isomorphism, \(\Op\) won't be one either. \(\Op\) will then be injective but not surjective. The reason for this is that the group \(\K^0(\mathcal{V}^*)\) encodes information about the map \(X \to B\) while the bivariant group \(\KK_0(X,B)\) only sees the two spaces.
\end{rem}

\subsection{Clutching constructions}\label{Subsection: Clutching}

Let 
\(X\to B\) be a contact fibration. In this subsection we define the the maps:
\begin{align*}
    c:\mathrm{K}^0(\mathcal{V^*})\to \mathrm{KK}_0^{geo}(X,B)\\
    b:\mathrm{K}_0(C^*(\mathcal{V_H}))\to \mathrm{KK}_0^{geo}(X,B)
\end{align*}
 and show the compatibility with the Connes-Thom map. 

The maps \(c\) and \(b\) are the family versions of the (noncommutative) Poincaré duality maps defined in \cite{BaumvanErp2014}, although in the family setting this terminology is misleading, as the maps are not necessarily isomorphisms (see Remark \ref{Remark: Not Poincaré duality} above). Both maps are incarnations of the clutching construction. The proofs of the analogous statements for contact manifolds, presented in (\cite{BaumvanErp2014}, Section 4) extend almost vertabim to the family setting. For the sake of completeness, we will recall these ideas and adjust them to contact fibrations.

We begin with the definition of \(c\).
Choosing a vertical symbol class \([\sigma]\in \mathrm{K^0}(\mathcal{V}^*)\) we want to obtain a well defined class in \(\mathrm{KK}^{geo}_0(X,B)\). 
The clutching manifold construction generalized to the family setting by defining the \(\mathrm{Spin}^c\)-manifold \(\Sigma(X|B):=\mathbb{S}(\mathcal{V}^*\oplus\underline{\mathbb{R}})\). This manifold is a sphere bundle over \(X\) with projection \(f:\Sigma(X|B)\to X\). Composing with the bundle projection \(g:=\pi\circ f:\Sigma(X|B)\to B\), we obtain a bundle over \(B\), where each fiber \(\Sigma(X|B)_b:=g^{-1}(\{b\})\) is isomorphic to the clutching manifold \(\Sigma(X_b)\), defined in \cite[Paragraph 10]{BaumDouglas1980}. We can identify the upper and lower hemispheres of \(\Sigma(X|B)\) with the interior of the ball bundle \(\mathbb{B}(\mathcal{V}^*)\) and glue them along the equator \(\mathbb{S}(\mathcal{V^*})\), so we obtain a decomposition: \[\Sigma(X|B)=\mathbb{B}(\mathcal{V}^*)\cup_{\mathbb{S}(\mathcal{V^*})}\mathbb{B}(\mathcal{V}^*)\]

Representing the class \([\sigma]\) by the triple \((\sigma,\pi^*E,\pi^*F)\) we can define the clutching bundle \(E_\sigma\to \Sigma(X|B)\) of \((\sigma,\pi^*E,\pi^*F)\) by restricting \(\pi^*E\) and \(\pi^*F\) to \(\mathbb{B}(\mathcal{V}^*)\) and clutch them along the common boundary by \(\sigma\), i.e. \[E_\sigma:=\pi^*E\cup_\sigma\pi^*F\]

The clutching manifold \(\Sigma(X|B)\) with \(f\) and \(g\) defines a correspondence:
\[\begin{tikzcd}
	& {\Sigma(X|B)} \\
	X && B
	\arrow["f"', from=1-2, to=2-1]
	\arrow["g", from=1-2, to=2-3]
\end{tikzcd}\]

The map \(f\) is K-oriented (see e.g. \cite[Paragraph 10]{BaumDouglas1980}) and so is the map \(X \to B\) because the fibers are contact manifolds. Their composition \(g\) is thus K-oriented. The projection together with the clutching bundle \(E_\sigma\), this defines defines a geometric \(\mathrm{KK}^{geo}_0\) -correspondence \([\Sigma(X|B),E_\sigma,f,g]\in \mathrm{KK}^{geo}_0(X,B)\).

The clutching map \(c\) is defined by \[c:\mathrm{K}^0(\mathcal{V^*})\to \mathrm{KK}_0^{geo}(X,B)\quad [\sigma,\pi^*E,\pi^*F]\mapsto [\Sigma(X|B),E_\sigma,f,g]\]

For the construction of \(b:\mathrm{K}_0(C^*(\mathcal{V_H}))\to \mathrm{KK}^{geo}_0(X,B)\) we will need the two canonical vertical \(\mathrm{Spin}^c\)-fundamental classes of \(\mathcal{V}\), corresponding to  the two co-orientations. We define the map \(b\) as the the following composition after a lift:
\[\begin{tikzcd}
	{\mathrm{K}^0(\mathcal{I_H})\cong\mathrm{K}^1(X)\oplus\mathrm{K^1}(X)} & {\mathrm{KK}^{geo}_0(X,B)\oplus\mathrm{KK}^{geo}_0(X,B)} \\
	{\mathrm{K}_0(C^*(\mathcal{V_H}))} & {\mathrm{KK}^{geo}_0(X,B)}
	\arrow["\Phi", from=1-1, to=1-2]
	\arrow["{+}", from=1-2, to=2-2]
	\arrow[dotted, from=2-1, to=1-1]
\end{tikzcd}\]
Where the map \(\Phi\) is given by \(\Phi=([(X|B)^+]\otimes)\oplus ([(X|B)^-]\otimes)\). The first isomorphism is described in (\cite{BaumvanErp2014} 4.7). We will briefly recall it. From \ref{Proposition: Lift symbol class} we obtained the Morita equivalence \(I_\mathcal{H}\underset{\mathrm{Morita}}{\sim}C_0(X\times \mathbb{R}^\times)\) induced by the Bargmann-Fock representation. Using the diffeomorphism \(\log:(0;\infty)\to \mathbb{R}\), we obtain the isomorphism in K-theory:\[\mathrm{K}^0(I_{\mathcal{H}})\cong\mathrm{K^0}(X\times \mathbb{R}^\times)\cong \mathrm{K}^0(X\times \mathbb{R})\oplus \mathrm{K}^0(X\times \mathbb{R})\cong\mathrm{K^1}(X)\oplus\mathrm{K^1}(X)\]

 \begin{lem}\label{Lemma: Thom Clutching}
    
     Let \(\tau^+\) denote the Thom class of \(\mathcal{H}^{1,0}\). Then the following diagram commutes:
\[\begin{tikzcd}
	{\mathrm{K}^0(X\times\mathbb{R})} & {\mathrm{K}^0(\mathcal{V^*})} \\
	{\mathrm{K}^1(X)} & {\mathrm{KK}^{geo}_0(X,B)}
	\arrow["{\cup\tau^+}", from=1-1, to=1-2]
	\arrow["\cong"', from=1-1, to=2-1]
	\arrow["c", from=1-2, to=2-2]
	\arrow["{\cap[(X|B)^+]}", from=2-1, to=2-2]
\end{tikzcd}\]
 \end{lem}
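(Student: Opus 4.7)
The plan is to adapt the non-family argument of \cite[Lemma 4.7]{BaumvanErp2014} to the fibration setting by describing both compositions around the square as explicit Baum--Douglas correspondences and identifying them via a \(\mathrm{Spin}^c\)-vector bundle modification. I first represent a class \(\beta\in \K^0(X\times\mathbb{R})\) via the suspension isomorphism as the K-theory class of an automorphism \(\alpha\colon E\to E\) of a complex vector bundle \(E\to X\), so that both routes begin with the pair \((E,\alpha)\).

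Going right-then-down: the Thom class \(\tau^+\in \K^0(\mathcal{H}^{1,0})\) is represented by the Koszul symbol \(v\mapsto \mathrm{ext}(v)+\mathrm{int}(v^*)\) on \(\Lambda^\bullet \mathcal{H}^{1,0}\), invertible away from the zero section. Under the identification \(\mathcal{V}^*\cong\mathcal{H}^{1,0}\oplus\underline{\mathbb{R}}\), the cup product \(\beta\cup\tau^+\in\K^0(\mathcal{V}^*)\) becomes a symbol \(\sigma\) combining this Koszul symbol in the \(\mathcal{H}^{1,0}\)-direction with the clutching data of \(\alpha\) in the \(\underline{\mathbb{R}}\)-direction. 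Applying \(c\) then produces the correspondence \([\Sigma(X|B), E_\sigma, f, g]\in\KK^{geo}_0(X,B)\), with \(E_\sigma\) assembled from \(f^*(\Lambda^\bullet \mathcal{H}^{1,0}\otimes E)\) via \(\sigma\) and carrying the \(\mathrm{Spin}^c\) structure induced by the complex bundles. Going down-then-right: the class \([(X|B)^+]\in \KK^{geo}_1(X,B)\) is the correspondence \([X, \underline{\mathbb{C}}, \mathrm{id}_X, \pi]^+\), and the cap product \([\alpha]\cap [(X|B)^+]\) is computed in the Baum--Douglas picture as the cycle \([X\times S^1, E_\alpha, \mathrm{pr}_X, \pi\circ \mathrm{pr}_X]^+\), where \(E_\alpha\) is the bundle on \(X\times S^1\) obtained from \(E\) by clutching with \(\alpha\) and the \(+\) Spin\(^c\) orientation is the vertical one.

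The two correspondences are then identified via a \(\mathrm{Spin}^c\)-vector bundle modification along the complex bundle \(\mathcal{H}^{1,0}\): the sphere bundle \(\Sigma(X|B)=\mathbb{S}(\mathcal{V}^*\oplus\underline{\mathbb{R}})\to X\) is obtained from \(X\times S^1\) by modification with \(\mathcal{H}^{1,0}\), and the Baum--Douglas modification formula carries \(E_\alpha\) to exactly \(E_\sigma\) (the Thom class \(\tau^+\) playing the role of the Bott class in the modification). The main obstacle, and the only genuinely delicate point, is to verify that the two \(\mathrm{Spin}^c\)-orientations agree: the ``\(+\)'' co-orientation defining \([(X|B)^+]\) must match the one inherited from the complex structure on \(\mathcal{H}^{1,0}\) together with the trivialization of \(\mathcal{V}/\mathcal{H}\) by the vertical Reeb field. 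Since every construction is natural in \(B\), this compatibility can be checked fiberwise and reduces to the non-family verification in \cite[Lemma 4.7]{BaumvanErp2014}.
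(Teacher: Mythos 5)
Your overall strategy (compute both composites as explicit correspondences and compare them on the geometric side) is reasonable, but the central identification step is wrong as stated. A Baum--Douglas vector bundle modification of the cycle \((X\times S^1, E_\alpha,\mathrm{pr}_X,\pi\circ\mathrm{pr}_X)\) along \(\mathrm{pr}_X^*\mathcal{H}^{1,0}\) produces the sphere bundle \(\mathbb{S}(\mathrm{pr}_X^*\mathcal{H}^{1,0}\oplus\underline{\mathbb{R}})\to X\times S^1\), whose fiber over a point of \(X\) is \(S^{2n}\times S^1\); by contrast \(\Sigma(X|B)=\mathbb{S}(\mathcal{V}^*\oplus\underline{\mathbb{R}})\) has fiber \(S^{2n+1}\) over \(X\). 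These manifolds are not diffeomorphic (nor is \(\Sigma(X|B)\) a modification of any cycle carried by \(X\) itself, since \(\mathcal{V}^*\) has odd rank \(2n+1\), while modification requires an even-rank \(\mathrm{Spin}^c\) bundle). So the assertion that ``\(\Sigma(X|B)\) is obtained from \(X\times S^1\) by modification with \(\mathcal{H}^{1,0}\), and the modification formula carries \(E_\alpha\) to \(E_\sigma\)'' fails, and with it the whole comparison. The two cycles are of course equivalent in \(\KK^{geo}_0(X,B)\) — that is the content of the lemma — but exhibiting the equivalence geometrically needs more than modification: using \(\mathcal{V}^*\cong\mathcal{H}^*\oplus\underline{\mathbb{R}}\) one has the join-type decomposition \(\mathbb{S}(\mathcal{H}^*\oplus\underline{\mathbb{R}}^2)=\mathbb{B}(\mathcal{H}^*)\times S^1\cup_{\mathbb{S}(\mathcal{H}^*)\times S^1}\mathbb{S}(\mathcal{H}^*)\times\mathbb{D}^2\), and one would have to build an explicit bordism of correspondences (with matching bundle data, not just matching manifolds) between this and the modified cycle; none of that is supplied, and it is precisely the delicate part. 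The orientation discussion you flag as ``the only genuinely delicate point'' is comparatively minor.

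For comparison, the paper avoids this geometric matching problem altogether: it inserts the Thom isomorphism \(\tau_{\mathcal{V}}\colon \K^1(X)\to\K^0(\mathcal{V}^*)\) of the full bundle \(\mathcal{V}^*\) as a diagonal of the square, so that the square splits into two triangles. The upper triangle is naturality/multiplicativity of Thom classes (\(\tau_{\mathcal{V}}\) versus \(\tau^+\) and the suspension factor), and the lower triangle is proved analytically, via the Dirac--dual-Dirac identities \(\beta_{\mathcal{V}^*}\otimes_{\mathcal{V}^*}\alpha_{\mathcal{V}^*}=\mathrm{id}_X\) and \(\beta_{\mathcal{V}^*}\otimes_{\mathcal{V}^*}[\mathcal{V}^*]=[(X|B)^+]\), transported to the geometric groups through the assembly isomorphism \(\mu\). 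If you want to keep your explicit-cycle approach, you would need to either supply the bordism of correspondences described above, or follow the paper (and Baum--van Erp) in reducing the comparison to a Thom-class computation plus an analytic Kasparov product identity.
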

\begin{proof}
    In order to prove the commutativity of the diagram, we use \(\tau_\mathcal{V}\) the Thom class of \(\mathcal{V}^* \)with respect to the positive \(\mathrm{Spin}^c\)-structure. It induces a Thom isomorphism for the bundle \(\mathcal{V}^*\to X\).
\[\begin{tikzcd}
	{\mathrm{K}^0(X\times\mathbb{R})} & {\mathrm{K}^0(\mathcal{V^*})} \\
	{\mathrm{K}^1(X)} & {\mathrm{KK}^{geo}_0(X,B)}
	\arrow["{\cup\tau^+}", from=1-1, to=1-2]
	\arrow["\cong"', from=1-1, to=2-1]
	\arrow["c", from=1-2, to=2-2]
	\arrow["{\tau_\mathcal{V}}"{description}, from=2-1, to=1-2]
	\arrow["{\cap[(X|B)^+]}"', from=2-1, to=2-2]
\end{tikzcd}\]
    The upper triangle commutes by naturality of the Thom classes. To show the commutativity of the lower triangle we use the Dirac-dual Dirac method. Denote by \(\beta_{\mathcal{V}^*}\in \mathrm{KK}_1(X,\mathcal{V^*})\) the KK-cycle defined by the Thom element and by \(\alpha_{\mathcal{V}^*}\in \mathrm{KK}_1(\mathcal{V}^*,X)\) the  KK-cycle given by the vertical Dirac operator for the fibers of \(\mathcal{V}^*\). The Kasparov product of these elements yields: \[\beta_{\mathcal{V}^*}\otimes_\mathcal{V^*}\alpha_{\mathcal{V^*}}=\mathrm{id}_X\in \mathrm{KK_0(X,X)}\]
    The total space of \(\mathcal{V}^*\) admits itself a vertical \(\mathrm{Spin^c}\) structure \([\mathcal{V}^*]\), which factorizes. \[[\mathcal{V}^*]={\alpha_{\mathcal{{V}}^*}}\otimes_{\mathcal{{V}^*}}{[(X|B)^+]}\in \mathrm{KK}_0(\mathcal{V}^*,B)\]

    The Kasparov-product with \(\beta_{\mathcal{V}^*}\) therefore yields:

    \begin{align*}
        \beta_{\mathcal{V}^*}\otimes[\mathcal{V}^*]&={\beta_{\mathcal{V}^*}}\otimes_{\mathcal{{V}^*}}{\alpha_{\mathcal{{V}}^*}}\otimes_{X}{[(X|B)^+]} \\
        &=[(X|B)^+]\in \mathrm{KK}_1(X,B)
    \end{align*}
    This equality under the isomorphism \(\mathrm{KK}_*^{geo}(X,B)\cong\mathrm{KK}_*(X,B)\) proves the commutativity of the lower triangle.

\end{proof}

\begin{lem}
    The map \(b \colon \K_0(C^*(\mathcal{V}_{\mathcal{H}})) \to \KK_0^{geo}(X,B)\) is well defined: the element \(b(\xi)\) does not depend on a lift of \(\xi\) to \(\K_0(I_{\mathcal{H}})\).
\end{lem}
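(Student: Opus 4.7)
The strategy is to exhibit the auxiliary map $\tilde b \colon \K_0(I_\mathcal{H}) \to \KK_0^{geo}(X,B)$ given by $\tilde b(\alpha_+,\alpha_-) := [(X|B)^+]\otimes \alpha_+ + [(X|B)^-]\otimes \alpha_-$ as the composition $c\circ \Psi^{-1}\circ \iota$, where $\iota\colon \K_0(I_\mathcal{H})\to \K_0(C^*(\mathcal{V}_\mathcal{H}))$ is the inclusion, $\Psi$ is the Connes--Thom isomorphism, and $c$ is the clutching map from Subsection \ref{Subsection: Clutching}. Since $c\circ \Psi^{-1}$ is manifestly defined on $\K_0(C^*(\mathcal{V}_\mathcal{H}))$, this factorization immediately yields that $\tilde b$ only depends on $\iota(\alpha_+,\alpha_-)$, which is the required well-definedness.

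The first step is to establish the analogue of Lemma \ref{Lemma: Thom Clutching} for the negative $\mathrm{Spin}^c$-structure: using the Thom class $\tau^-$ of $\mathcal{H}^{0,1}$, I would prove $c(\,\cdot\,\cup\tau^-) = \,\cdot\,\cap[(X|B)^-]$ as maps $\K^1(X)\to \KK_0^{geo}(X,B)$. The proof mirrors that of Lemma \ref{Lemma: Thom Clutching}: run the Dirac-dual Dirac argument with the Thom class of the opposite $\mathrm{Spin}^c$-orientation on $\mathcal{V}^*$, obtaining the factorization $[\mathcal{V}^*]^- = \alpha^-_{\mathcal{V}^*}\otimes[(X|B)^-]$, and conclude.

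The second step is a topological identification of $\iota$. From the proof of Proposition \ref{Proposition: Lift symbol class} and the commutative square with $\Psi$, a class $(\alpha_+,\alpha_-) \in \K^1(X)\oplus \K^1(X) \cong \K_0(I_\mathcal{H})$ lifts via $\Psi\circ \iota$ to the class $\alpha_+\cup \tau^+ + \alpha_-\cup \tau^-$ in $\K^0(\mathcal{V}^*)$, obtained by applying the Thom isomorphisms on each connected component of $\mathcal{V}^*\setminus \mathcal{H}^*$ and then summing under the open inclusion $\mathcal{V}^*\setminus \mathcal{H}^* \hookrightarrow \mathcal{V}^*$. Applying the group homomorphism $c$ and using both clutching lemmas then gives
\[
c\bigl(\Psi^{-1}(\iota(\alpha_+,\alpha_-))\bigr) = c(\alpha_+\cup \tau^+) + c(\alpha_-\cup \tau^-) = [(X|B)^+]\otimes \alpha_+ + [(X|B)^-]\otimes \alpha_- = \tilde b(\alpha_+,\alpha_-),
\]
so that $\tilde b = c\circ \Psi^{-1}\circ \iota$ as desired, and $b$ is well defined.

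The main obstacle I anticipate is the negative analogue of Lemma \ref{Lemma: Thom Clutching}, which requires a careful treatment of the opposite $\mathrm{Spin}^c$-orientation and its interaction with the relation $[(X|B)^+] = (-1)^{n+1}[(X|B)^-]$ recorded in Subsection \ref{Subsection: Fundamental classes}. A related subtlety is checking that the Bargmann--Fock Morita identifications entering $\K_0(I_\mathcal{H}) \cong \K^1(X)\oplus \K^1(X)$ are compatible with the Thom classes $\tau^\pm$ so that the inclusion $\K^0(\mathcal{V}^*\setminus \mathcal{H}^*)\to \K^0(\mathcal{V}^*)$ indeed corresponds to summing the two Thom-pushed components; this is precisely what makes the additivity of $c$ give the correct cancellation with no extra sign.
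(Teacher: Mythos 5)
Your proposal is correct, but it proves the lemma by a genuinely different mechanism than the paper. The paper argues directly on the kernel: from the six-term exact sequence of \(0\to I_{\mathcal{H}}\to C^*(\mathcal{V}_{\mathcal{H}})\to C_0(\mathcal{H}^*)\to 0\), the kernel of \(\K_0(I_{\mathcal{H}})\to \K_0(C^*(\mathcal{V}_{\mathcal{H}}))\) is the image of \(\K^1(\mathcal{H}^*)\), which under \(\K_0(I_{\mathcal{H}})\cong \K^1(X)\oplus\K^1(X)\) is the copy of \(\K^1(X)\) embedded by \(\begin{pmatrix}1\\(-1)^n\end{pmatrix}\) (since \(\tau^+=(-1)^n\tau^-\)); the two terms of the sum defining \(b\) then cancel because \([(X|B)^+]=(-1)^{n+1}[(X|B)^-]\). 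You instead prove that the auxiliary map \(\tilde b\) on \(\K_0(I_{\mathcal{H}})\) factors as \(c\circ\Psi^{-1}\circ\iota\), using the commutative square from Proposition \ref{Proposition: Lift symbol class} (justified in the paper by the remark comparing the two extensions under \(\Psi\)) to identify \(\Psi^{-1}\circ\iota\) with the Thom-isomorphism-plus-open-inclusion description, together with Lemma \ref{Lemma: Thom Clutching} and its \(\tau^-\)/\([(X|B)^-]\) analogue. This is non-circular, since you never invoke \(b\) itself, and it effectively front-loads the content of Theorem \ref{Theorem: clutching maps commute}, which then follows on the nose from your factorization and the surjectivity of \(\iota\). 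What the paper's route buys is economy: only the positive clutching lemma is needed at this stage, and all the sign bookkeeping is localized in one explicit two-line cancellation. What your route buys is a structural argument with no separate kernel computation and the compatibility \(b\circ\Psi=c\) as a byproduct. The cost sits exactly where you flag it: you must actually carry out the opposite-orientation Dirac--dual-Dirac argument, and match the suspension and Bargmann--Fock conventions in \(\K_0(I_{\mathcal{H}})\cong\K^1(X)\oplus\K^1(X)\) with those used in \(\Phi\); the relations \(\tau^+=(-1)^n\tau^-\) and \([(X|B)^+]=(-1)^{n+1}[(X|B)^-]\), which the paper uses explicitly, are hidden in that convention-matching, and since well-definedness is precisely a sign cancellation, a mismatch there would break the factorization. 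With those checks done, your argument is complete.
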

\begin{proof}
    Using the exact sequence in K-theory, the kernel of the map \(\K_0(I_{\mathcal{H}}) \to \K_0(C^*(\mathcal{V}_{\mathcal{H}}))\) is given by the image of \(\K^1(\mathcal{H})\). In the corresponding diagram:
    \[\xymatrix{\K_0(I_{\mathcal{H}}) \ar[r]^(.4){\sim} & \K^1(X) \oplus \K^1(X) \\
                \K^1(\mathcal{H}^*) \ar[u] \ar[r]^{\sim} & \K^1(X) \ar[u]}  \]
    where the bottom horizontal arrow is the Thom isomorphism of the bundle \(\mathcal{H}\) (with the positive complex structure, i.e. \(\tau^+\)), the vertical arrow is the embedding given by the matrix \(\begin{pmatrix}
        1 \\ (-1)^n
    \end{pmatrix}\). Indeed the inclusion in the first coordinate is made through \(\tau^+\) and the second one through \(\tau
    ^-\) (see the previous lemma). Since \(\mathcal{H}\) has real rank \(2n\) then \(\tau^+ = (-1)^n \tau^-\). Finally since \([(X|B)^+] = (-1)^{n+1}[(X|B)^-]\) we get that the image of \(\xi \in \K^1(\mathcal{H}^*)\) in \(\KK^{geo}_0(X,B)\) has the following form (writing \(\widetilde{\xi}\) for the corresponding element in \(K^1(X)\)):
    \[[(X|B)^+] \cap \widetilde{\xi} + (-1)^n. (-1)^{n+1} [(X|B)^+] \cap \widetilde{\xi} = 0.\]

    Therefore the map \(\K_0(I_{\mathcal{H}}) \to \KK^{geo}_0(X,B)\) factors through \(\K_0(C^*(\mathcal{V}_{\mathcal{H}}))\) and \(b\) is well defined.
\end{proof}

\begin{thm}\label{Theorem: clutching maps commute}
    The following diagram commutes:

\[\begin{tikzcd}
	& {\mathrm{K}_0(\text{C}^*(\mathcal{V}_\mathcal{H}))} \\
	{\mathrm{KK}^{geo}_0(X,B)} \\
	& {\mathrm{K}^{0}(\mathcal{V^*})}
	\arrow["{b}"', from=1-2, to=2-1]
	\arrow["{\Psi_{(X,B)}}"', from=3-2, to=1-2]
	\arrow["{c}"', from=3-2, to=2-1]
\end{tikzcd}\]
\end{thm}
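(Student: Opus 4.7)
The plan is to unpack the definitions of \(b\) and \(\Psi\) and reduce the commutativity to Lemma \ref{Lemma: Thom Clutching} applied in both the \(\tau^+\)/\([(X|B)^+]\) and the analogous \(\tau^-\)/\([(X|B)^-]\) version, summed over the two halves of \(\mathcal{V}^*\setminus \mathcal{H}^*\). The key compatibility that glues these two pieces together is the commutative square (in the remark immediately preceding Lemma \ref{Lemma: Thom Clutching}) identifying \(\Psi\) with the natural extension-by-zero map coming from the open subset \(\mathcal{V}^*\setminus \mathcal{H}^*\subset \mathcal{V}^*\).

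Concretely, fix \([\sigma]\in \mathrm{K}^0(\mathcal{V}^*)\). By Proposition \ref{Proposition: Lift symbol class} the map \(\mathrm{K}^0(\mathcal{V}^*\setminus \mathcal{H}^*)\to \mathrm{K}^0(\mathcal{V}^*)\) is surjective, so I choose a lift \(\tilde{\sigma}\in \mathrm{K}^0(\mathcal{V}^*\setminus \mathcal{H}^*)\). The commutative square above then shows that \(\tilde{\sigma}\), viewed through the identification \(\mathrm{K}^0(\mathcal{V}^*\setminus \mathcal{H}^*)\cong \mathrm{K}_0(I_{\mathcal{H}})\), is a lift of \(\Psi([\sigma])\) through the surjection \(\mathrm{K}_0(I_{\mathcal{H}})\twoheadrightarrow \mathrm{K}_0(C^*(\mathcal{V}_{\mathcal{H}}))\); by the already-established well-definedness of \(b\), I may compute \(b(\Psi([\sigma]))\) from this lift.

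The splitting \(\mathcal{V}^*\setminus \mathcal{H}^* = (\mathcal{V}^*)_+\sqcup (\mathcal{V}^*)_-\) by the sign of the \(\mathcal{H}^{\perp}\)-component (from the co-orientation) decomposes \(\tilde{\sigma} = \tilde{\sigma}^+ + \tilde{\sigma}^-\). Through the isomorphism \(\mathrm{K}_0(I_{\mathcal{H}})\cong \mathrm{K}^1(X)\oplus \mathrm{K}^1(X)\) of Proposition \ref{Proposition: Lift symbol class}, each summand arises from a class \(\xi^\pm\in \mathrm{K}^1(X)\) by suspension along \(\mathbb{R}^*_\pm\) and the Thom isomorphism \(\cup\tau^\pm\), so by the very definition of \(b\) one obtains
\[b(\Psi([\sigma])) = [(X|B)^+]\cap\xi^+ + [(X|B)^-]\cap\xi^-.\]
On the clutching side, \(c\) is additive, so \(c([\sigma]) = c(\tilde{\sigma}^+) + c(\tilde{\sigma}^-)\). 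Lemma \ref{Lemma: Thom Clutching} supplies \(c(\tilde{\sigma}^+) = [(X|B)^+]\cap\xi^+\), and its analog with \(\tau^-\) and \([(X|B)^-]\) — proved verbatim by the same Dirac/dual-Dirac argument using the opposite \(\mathrm{Spin}^c\)-orientation of \(\mathcal{V}^*\) — gives \(c(\tilde{\sigma}^-) = [(X|B)^-]\cap\xi^-\). Summing closes the triangle.

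The main obstacle is verifying the commutativity of the remark's square in \(\KK^X\), i.e.\ that the Connes-Thom isomorphism \(\Psi\) is truly compatible with the two extension-by-zero maps from \(\mathcal{V}^*\setminus \mathcal{H}^*\) and \(I_{\mathcal{H}}\); once this and the \(\tau^-\) analog of Lemma \ref{Lemma: Thom Clutching} are in hand, the remaining steps are orientation bookkeeping, organized exactly as in \cite{BaumvanErp2014} in the non-family case. In particular the sign relation \([(X|B)^+]=(-1)^{n+1}[(X|B)^-]\) from Subsection \ref{Subsection: Fundamental classes} ensures that the ambiguity in the choice of lift \(\tilde{\sigma}\) matches the one already cancelled in the proof that \(b\) is well defined, so the answer indeed depends only on \([\sigma]\).
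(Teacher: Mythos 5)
Your proposal is correct and is essentially the paper's own argument, rendered element-wise: you lift a class through the surjection of Proposition \ref{Proposition: Lift symbol class} (using the compatibility square between \(\Psi\) and the extension-by-zero from \(\mathcal{V}^*\setminus\mathcal{H}^*\)), split it over the two components \(\mathbb{R}^*_{\pm}\), apply Lemma \ref{Lemma: Thom Clutching} and its \(\tau^-\)/\([(X|B)^-]\) analogue, and invoke the well-definedness of \(b\) — exactly the diagram chase given in the paper. No gaps; only a minor point is that the compatibility square you cite sits in the remark at the end of Subsection \ref{Subsection: Choosing an operator} rather than immediately before Lemma \ref{Lemma: Thom Clutching}.
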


\begin{proof}

We prove the commutativity of the diagram in terms of diagram chasing.

\[\begin{tikzcd}
	& {\mathrm{K}^0(X\times \mathbb{R}^\times)} & {\mathrm{K}^1(X)\oplus\mathrm{K}^1(X)} \\
	& {\mathrm{K}^0(\mathcal{V}^*\setminus \mathcal{H}^*)} & {\mathrm{KK}_0^{geo}(X,B)\oplus\mathrm{KK}_0^{geo}(X,B)} \\
	& {\mathrm{K}^0(\mathcal{V}^*)} & {\mathrm{KK}_0^{geo}(X,B)} \\
	{\mathrm{K}_0(C^*(\mathcal{V_H}))}
	\arrow["\cong", from=1-2, to=1-3]
	\arrow["{\cup\tau^\pm}"', from=1-2, to=2-2]
	\arrow[shift right=3, curve={height=18pt}, from=1-2, to=4-1]
	\arrow["{\cap[(X|B)^\pm]}", from=1-3, to=2-3]
	\arrow["{i_*}"', from=2-2, to=3-2]
	\arrow["{+}", from=2-3, to=3-3]
	\arrow["c", from=3-2, to=3-3]
	\arrow["\Psi"', from=3-2, to=4-1]
	\arrow["{\mathrm{lift}}", shift left=5, curve={height=-18pt}, dotted, from=4-1, to=1-2]
	\arrow["b"{description}, curve={height=30pt}, from=4-1, to=3-3]
\end{tikzcd}\]

The commutativity of the most left triangle was proven in \ref{Proposition: Lift symbol class}. From \ref{Lemma: Thom Clutching} we have the commutativity of the square for both \(\pm\) signs. The sign of the left vertical isomorphism depends on the orientation of \(X\times \mathbb{R}\), hence the first diagram commutes with the product with reversed orientation fundamental class \([(X|B)^-]\).


\[\begin{tikzcd}
	{\mathrm{K}^0(X\times(0,\infty))} & {\mathrm{K}^0(\mathcal{V^*})} \\
	{\mathrm{K}^1(X)} & {\mathrm{KK}^{geo}_0(X,B)}
	\arrow["{\cup\tau^+}", from=1-1, to=1-2]
	\arrow["\cong"', from=1-1, to=2-1]
	\arrow["c", from=1-2, to=2-2]
	\arrow["{\cap[(X|B)^+]}", from=2-1, to=2-2]
\end{tikzcd}\]
\[\begin{tikzcd}
	{\mathrm{K}^0(X\times(-\infty,0))} & {\mathrm{K}^0(\mathcal{V^*})} \\
	{\mathrm{K}^1(X)} & {\mathrm{KK}^{geo}_0(X,B)}
	\arrow["{\cup\tau^-}", from=1-1, to=1-2]
	\arrow["\cong"', from=1-1, to=2-1]
	\arrow["c", from=1-2, to=2-2]
	\arrow["{\cap[(X|B)^-]}", from=2-1, to=2-2]
\end{tikzcd}\]

Combining the diagrams for each connected component of \(\mathbb{R}^\times\) we obtain the commutativity of large square.
By the definition of \(b\), as composition of the maps above, also the lower triangle commutes.
This finishes the proof.

\end{proof}


\subsection{Family index theorem}\label{Subsection: Index theorem}

The family index theorem for contact fibrations reduces to the usual one using what we have proved thus far.

\begin{thm}[Atiyah-Singer family index]\label{Theorem:Atiyah Singer families}
    Let \(X \to B\) be a compact fibration, let \(\mathcal{V} \to X\) be the vertical bundle. The following diagram commutes:
    \[\begin{tikzcd}
                                     & \K^0(\mathcal{V}^*) \arrow[ld, "c"'] \arrow[rd, "\Op"] &            \\
        {\KK^{geo}_0(X,B)} \arrow[rr, "\mu"] &                                                      & {\KK_0(X,B)}
    \end{tikzcd}\]
\end{thm}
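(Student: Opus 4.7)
The strategy is to reduce this family Atiyah--Singer statement to the identification, at the level of $\KK$-cycles, of two representatives of the same class in $\KK_0(X,B)$: on the one hand a family of vertical pseudodifferential operators with prescribed symbol, on the other hand a pushforward of a vertical twisted Dirac operator living on the clutching sphere bundle. This is the bivariant adaptation of the argument in \cite{BaumvanErp2014}; the novelty over the non-family setting is that one must carry the base parameter $b \in B$ through all constructions, since the receiving group $\KK_0(X,B)$ no longer arises from a Poincaré duality (see Remark \ref{Remark: Not Poincaré duality}).

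First I would unfold both compositions on a representative $[\sigma,\pi^*E,\pi^*F]\in \K^0(\mathcal{V}^*)$. By construction $\Op([\sigma]) = [P]$ where $P \in \Psi^0(X|B;E,F)$ is any family of vertical elliptic operators with principal symbol $\sigma$; its class is well defined by Lemma \ref{Lemma: Kasparov cycle} and the first half of Theorem \ref{Theorem: Op behaviour and commutation}. On the geometric side, $c([\sigma]) = [\Sigma(X|B),E_\sigma,f,g]$ with $f\colon \Sigma(X|B)\to X$ the sphere-bundle projection, $g = \pi\circ f$, and $E_\sigma$ the clutching bundle. Applying the assembly map $\mu$ and using Remark \ref{Remark: KK class is vertical twisted Dirac}, one gets
\[
\mu(c([\sigma])) \;=\; f_*\!\left([D^{E_\sigma}_{\mathcal{V}_g}]\right) \in \KK_0(X,B),
\]
where $D^{E_\sigma}_{\mathcal{V}_g}$ is the vertical $\mathrm{Spin}^c$-Dirac operator on the fibration $g\colon \Sigma(X|B)\to B$ twisted by $E_\sigma$. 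The equality to prove becomes $f_*[D^{E_\sigma}_{\mathcal{V}_g}] = [P]$ in $\KK_0(X,B)$.

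The heart of the proof is then to identify $f_*[D^{E_\sigma}_{\mathcal{V}_g}]$ with the class of a vertical pseudodifferential operator of symbol $\sigma$ on $X\to B$. The key observation is that the vertical tangent bundle of $g$ splits (after a horizontal distribution is chosen for the sphere bundle) as $T^{\mathrm{vert}}_f \oplus f^*\mathcal{V}$; correspondingly $D^{E_\sigma}_{\mathcal{V}_g}$ differs, modulo lower order terms, from a product of Dirac operators along the two directions. Pushing forward along $f$, the sphere-direction factor realises the Bott/Thom element $\tau^+$; twisting by $E_\sigma$ and applying the Thom isomorphism identifies the remaining symbol on $f^*\mathcal{V}^* = X\times_X \mathcal{V}^*$ with the original $\sigma$ (this is exactly the clutching--Bott inversion used in Lemma \ref{Lemma: Thom Clutching}). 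Hence the pushforward is represented by a $\Psi^0(X|B)$-class with symbol $\sigma$, which by the first half of Theorem \ref{Theorem: Op behaviour and commutation} is $\Op([\sigma])$. A clean and uniform way to implement this identification is via the adiabatic/tangent groupoid of the fibration: one constructs a single $\KK$-class on $C^*(G^{ad})$ whose evaluations at the endpoints recover, on one side, $\Op([\sigma])$ via $\ev_1$, and on the other side, the family twisted Dirac picture via $\ev_0$ together with the Thom--Connes isomorphism and the clutching description of $[\sigma]$ from Subsection \ref{Subsection: Clutching}.

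The main obstacle I expect is not the fibrewise content (which is Baum--van Erp in the elliptic case, equivalently classical Atiyah--Singer on each fibre $X_b$), but rather the rigorous continuous dependence on $b\in B$: one has to manipulate unbounded vertical operators acting on a Hilbert bundle, take pushforwards by a sphere fibration, and compare them to pseudodifferential families, all while preserving the $C(B)$-linear structure needed to land in $\KK_0(X,B)$ rather than $\KK_0(X,\mathrm{pt})$. This is precisely where the groupoid/$C(B)$-algebra framework used for $\Op$ in Subsection \ref{Subsection: Choosing an operator} becomes essential; once one has set everything up inside the category $\KK^X$, the naturality of the Kasparov product and of the Thom isomorphism make the commutativity a consequence of the fibrewise classical result.
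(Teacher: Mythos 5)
Your overall strategy is sound and could be fleshed out into a proof, but it is a genuinely different --- and considerably longer --- route than the one the paper takes. The paper's proof is essentially a citation: it observes that the statement is the \(\KK\)-theoretic reformulation of the Atiyah--Singer family index theorem \cite{AtiyahSinger1968}, and that the Thom-isomorphism argument of Baum--van Erp \cite{BaumvanErp2016Elliptic} (which in the non-family case involves \(X\), \(T^*X\) and \(T^*(T^*X)\)) goes through verbatim once one replaces \(T^*X\) by \(\mathcal{V}^*\) and \(T^*(T^*X)\) by the vertical dual for the fibration \(\mathcal{V}^*\to X\to B\). Your plan instead re-proves the statement directly: unfold \(\mu(c([\sigma]))\) as the pushforward along \(f\) of the twisted vertical Dirac operator on \(\Sigma(X|B)\to B\), split the vertical tangent bundle as \(T^{\mathrm{vert}}_f\oplus f^*\mathcal{V}\), factor the Dirac operator up to lower-order terms, recognize the Bott/Thom element along the sphere direction, and use the adiabatic groupoid to interpolate with \(\Op([\sigma])\). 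This buys a more self-contained, constructive argument, close in spirit to the Connes--Skandalis proof of the longitudinal index theorem \cite{ConnesSkandalis1984}, but at the cost of having to verify in the family setting the clutching-versus-Thom compatibility for the sphere bundle (the analogue of Lemma \ref{Lemma: Thom Clutching}, including the bookkeeping of the trivial summand \(\pi^*F\) inside \(E_\sigma\)) and the multiplicativity step for the split Dirac operator --- none of which is needed if one simply imports the argument of \cite{BaumvanErp2016Elliptic} with the indicated substitution of bundles.

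One caution about your closing sentence: the commutativity is not ``a consequence of the fibrewise classical result'' in any literal sense, because equality in \(\KK_0(X,B)\) cannot be checked fibre by fibre --- restriction to the fibres \(\K_0(X_b)\) is far from injective, which is precisely the phenomenon behind Remark \ref{Remark: Not Poincaré duality}. What is true, and what both the paper's citation and your groupoid set-up actually rely on, is that the fibrewise \emph{proof} carries over once every ingredient (symbol class, Dirac and Thom elements, adiabatic deformation) is constructed \(C(B)\)-linearly, so that the conclusion is an identity of \(C(B)\)-linear Kasparov products rather than a naturality argument from the fibres. With that step reformulated, your plan is a legitimate alternative to the paper's proof.
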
   
\begin{proof}
    This is a reformulation of the Atiyah Singer family index \cite{AtiyahSinger1968} in \(\KK\)-theory. The proof is identical as in \cite{BaumvanErp2016Elliptic}, the only difference is in their Thom isomorphism argument with \(X\), \(T^*X\), and \(T^*(T^*X)\). We need to replace the latter two by \(\mathcal{V}^*\) and the dual of the vertical bundle for the fibration \(\mathcal{V}^*\to X \to B\).
\end{proof}

\begin{thm}
    Let \(X \to B\) be a compact contact fibration. Let \(\mathcal{V}_{\mathcal{H}} \to X\) be the Heisenberg vertical bundle. The following diagram commutes:
    \[\begin{tikzcd}
                                     & \K_0(C^*(\mathcal{V}_{\mathcal{H}})) \arrow[ld, "b"'] \arrow[rd, "\Op_H"] &            \\
        {\KK^{geo}_0(X,B)} \arrow[rr, "\mu"] &                                                      & {\KK_0(X,B)}
    \end{tikzcd}\]
    Consequently, if \(P\) is a family of Rockland operators, we have the equality in the group \(\KK_0(X,B)\):
    \[[P] = [\sigma_0^{-1}\circ\sigma_+(P)]\cap[(X|B)^+] + [\sigma_0^{-1}\circ\sigma_+(P)]\cap[(X|B)^+].\]
\end{thm}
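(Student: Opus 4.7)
The plan is to obtain the commutativity of the Heisenberg triangle by chaining together the three compatibilities already established in Subsections~\ref{Subsection: Choosing an operator} and \ref{Subsection: Clutching}, and then to unpack the consequent formula by tracing through the construction of $b$. Concretely, Theorem~\ref{Theorem: clutching maps commute} gives $c = b \circ \Psi$, the second part of Theorem~\ref{Theorem: Op behaviour and commutation} gives $\Op = \Op_H \circ \Psi$, and Theorem~\ref{Theorem:Atiyah Singer families} gives $\mu \circ c = \Op$. Combining these,
\[\mu \circ b \circ \Psi \;=\; \mu \circ c \;=\; \Op \;=\; \Op_H \circ \Psi,\]
and since $\Psi \colon \K^0(\mathcal{V}^*) \to \K_0(C^*(\mathcal{V}_{\mathcal{H}}))$ is an isomorphism, we can cancel it on the right to conclude $\mu \circ b = \Op_H$, which is exactly the commutativity claimed.

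For the index formula, I would apply both sides of $\mu \circ b = \Op_H$ to the Heisenberg principal symbol class $[\sigma_H(P)] \in \K_0(C^*(\mathcal{V}_{\mathcal{H}}))$. The first part of Theorem~\ref{Theorem: Op behaviour and commutation} identifies the right hand side with $[P]$. To compute the left hand side I would follow the recipe defining $b$: lift $[\sigma_H(P)]$ to $\K_0(I_{\mathcal{H}})$ (which is possible by Proposition~\ref{Proposition: Lift symbol class}), transport via the Bargmann--Fock Morita equivalences and the suspension isomorphism
\[\K_0(I_{\mathcal{H}}) \;\cong\; \K^0(X\times\mathbb{R}^\times) \;\cong\; \K^1(X) \oplus \K^1(X),\]
and then apply $\Phi$, namely cap with $[(X|B)^+]$ on the first summand and with $[(X|B)^-]$ on the second, and sum. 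The content to check is that, under this chain of identifications, the lift of $[\sigma_H(P)]$ corresponds to the pair $\bigl([\sigma_0^{-1}\circ\sigma_+(P)],\,[\sigma_0^{-1}\circ\sigma_-(P)]\bigr)$ constructed in Subsection~\ref{Subsection: Clutching}. Granted that, the desired formula (with the evident typographical correction $\sigma_+ \leadsto \sigma_-$ in the second summand) drops out immediately.

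I expect the main technical obstacle to be precisely this last identification. The difficulty is that $\sigma_\pm(P)$ naturally live on the infinite rank symmetric Fock bundles $\mathcal{F}^+(\mathcal{H}^{1,0})$ and $\mathcal{F}^+(\mathcal{H}^{0,1})$, whereas classes in $\K^1(X)$ are represented by automorphisms of finite rank bundles. I would handle this by fixing a parametrix and truncating: on $\bigoplus_{k=0}^N \operatorname{Sym}^k(\mathcal{H}^{1,0}) \otimes E$ with $N$ large enough, the compression of $\sigma_0^{-1}\circ\sigma_+(P)$ is a genuine automorphism, and the stable limit recovers the class in $\K^1(X)$; the analogous argument works for the lower hemisphere. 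Finally, the ambiguity in the choice of lift to $\K_0(I_{\mathcal{H}})$ is controlled by the image of $\K^1(\mathcal{H}^*)$, and the computation already carried out in the well-definedness lemma for $b$ shows that this ambiguity is killed after applying $\Phi$ and summing, because $[(X|B)^+] = (-1)^{n+1}[(X|B)^-]$ exactly cancels the relation $\tau^+ = (-1)^n \tau^-$ between the two Thom classes. This closes the argument.
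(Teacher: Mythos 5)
Your argument is correct and is essentially the paper's own proof: the paper likewise obtains the triangle by gluing the commutativity statements of Theorem \ref{Theorem: clutching maps commute}, Theorem \ref{Theorem: Op behaviour and commutation} and Theorem \ref{Theorem:Atiyah Singer families} (cancelling the isomorphism \(\Psi\)), and then reads off the formula as \(\mu^{-1}([P]) = b([\sigma_H(P)])\) by unwinding the definition of \(b\). Your typographical correction is right, and note it should also replace \([(X|B)^+]\) by \([(X|B)^-]\) in the second summand, exactly as in your description of \(\Phi\).
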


\begin{proof}
    The following diagram commutes:
    \[\begin{tikzcd}
	& {\mathrm{K}_*(\text{C}^*(\mathcal{V}_\mathcal{H}))} \\
	\\
	& {\mathrm{K}^{*}(\mathcal{V^*})} \\
	{\mathrm{KK}^{geo}_*(X,B)} && {\mathrm{KK}_*(C(X),C(B))}
	\arrow["b"', curve={height=12pt}, from=1-2, to=4-1]
	\arrow["{\mathrm{Op}}", curve={height=-12pt}, from=1-2, to=4-3]
	\arrow["\Psi"', from=3-2, to=1-2]
	\arrow["c"', from=3-2, to=4-1]
	\arrow[from=3-2, to=4-3]
	\arrow["\mu", from=4-1, to=4-3]
    \end{tikzcd}\]
    Indeed the upper left triangle commutes by Theorem \ref{Theorem: clutching maps commute}. The upper right triangle commutes by Theorem \ref{Theorem: Op behaviour and commutation}. The lower triangle is Theorem \ref{Theorem:Atiyah Singer families}.

    The formula is then the equality \(\mu^{-1}([P]) = b([\sigma_H(P)])\). 
\end{proof}

We can now apply this formula for families of Toeplitz operators, generalizing Boutet de Monvel's result.

\begin{thm}
    Let \(X \to B\) be a compact contact fibration, \(E\to X\) a complex vector bundle. Let \(S \in \Sz(X|B;E)\) be a family of Szegö projections. Let \(f \colon X \to \mathrm{GL}(E)\) be an everywhere invertible section. Let \(T_f = S\mathcal{M}(f)S\) be the Toeplitz operator associated with \(f\). We have the equality in \(\KK_0(X,B)\):
    \[[P] = [f] \cap [(X|B)^+].\]
    This means that \(P\) defines the same index bundle in \(\K^0(B)\) as the family of Dirac operators \(\mathrm{D}_{\mathcal{V}}^{E_f}\) on \(X\times \mathbb{S}^1\to B\) where \(E_f \to X \times \mathbb{S}^1\) is the bundle obtained by clutching construction. 
\end{thm}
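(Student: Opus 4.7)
The plan is to apply the family Rockland index theorem proved just above to a Rockland regularization of $T_f$, and then identify the resulting K-theory classes. Since $S$ has a large kernel, $T_f = S\mathcal{M}(f)S$ is not itself a Rockland operator, so I consider the auxiliary operator
\[P := T_f + (1-S) \in \Psi^0_H(X|B;E),\]
which under the orthogonal splitting $L^2(X|B;E) = \mathrm{im}(S) \oplus \ker(S)$ decomposes as $T_f \oplus \mathrm{Id}$. Since the commutators $[\mathcal{M}(h),S]$ lie in $\Psi^{-1}_H(X|B;E)$ and are hence compact, the $C(X)$-action respects the splitting modulo compacts and the identity summand yields a degenerate Kasparov cycle. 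Thus $[P] = [T_f]$ in $\KK_0(X,B)$.

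A direct computation of the principal symbol, using $\sigma_-(S) = \sigma_0(S) = 0$ and $\sigma_+(S) = \mathrm{Id}_E \otimes s_0$ with $s_0$ the rank-one ground state projection on $\mathcal{F}^+(\mathcal{H}^{1,0})$, yields
\[\sigma_+(P) = f \otimes s_0 + \mathrm{Id}_E \otimes (1-s_0), \qquad \sigma_-(P) = \mathrm{Id}, \qquad \sigma_0(P) = \mathrm{Id}.\]
All three are invertible because $f$ is, so $P$ is Rockland. Applying the family index formula gives
\[[T_f] = [P] = [\sigma_0^{-1}\sigma_+(P)] \cap [(X|B)^+] + [\sigma_0^{-1}\sigma_-(P)] \cap [(X|B)^-].\]
The second summand vanishes since the automorphism is the identity. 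In the first, the automorphism $\sigma_0^{-1}\sigma_+(P)$ acts as $f$ on the summand $E \otimes \mathrm{Sym}^0(\mathcal{H}^{1,0}) \cong E$ (since $\mathrm{Sym}^0$ of any vector bundle is the trivial line bundle) and as the identity on $E \otimes \mathrm{Sym}^k(\mathcal{H}^{1,0})$ for $k \geq 1$. Hence its class in $\K^1(X)$ equals $[f]$, and we conclude $[T_f] = [f] \cap [(X|B)^+]$.

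For the geometric interpretation, one represents $[f] \in \K^1(X)$ through the suspension isomorphism by the clutching bundle $E_f \to X \times \mathbb{S}^1$. By Lemma \ref{Lemma: Thom Clutching} and the construction of the clutching map $c$ in Subsection \ref{Subsection: Clutching}, the cap product $[f] \cap [(X|B)^+]$ is represented in $\KK^{geo}_0(X,B)$ by the correspondence $(X \times \mathbb{S}^1, E_f, \mathrm{pr}_X, \pi \circ \mathrm{pr}_X)$. By Remark \ref{Remark: KK class is vertical twisted Dirac}, its image under the assembly map is the class of the family $\mathrm{D}_{\mathcal{V}}^{E_f}$ of vertical $\mathrm{Spin}^c$-Dirac operators on the fibration $X \times \mathbb{S}^1 \to B$ twisted by $E_f$; pushing forward to $\K_0(B)$ then recovers the index-bundle statement.

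The main technical obstacle I anticipate is making the equality $[P] = [T_f]$ in $\KK_0(X,B)$ rigorous. The two classes are naturally defined through different $C^*$-algebra extensions — the Toeplitz extension \ref{Equation: Exact sequence Toeplitz families completed} and the Heisenberg pseudodifferential extension \ref{Equation: Pseudos Exact Sequence Families Completed} — and one must verify that the block decomposition of $P$, together with the compatibility of $\mathcal{M}$ with the splitting $\mathrm{im}(S) \oplus \ker(S)$ modulo compacts, indeed produces matching Kasparov cycles after discarding the degenerate identity summand.
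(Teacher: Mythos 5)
Your proposal is correct and follows essentially the same route as the paper: you regularize with the same operator $T_f + (1-S)$, compute the same principal symbols $\bigl(f\otimes s_0 + 1\otimes(1-s_0),\ \mathrm{Id},\ \mathrm{Id}\bigr)$, and invoke the family Rockland index theorem, with the second summand vanishing and the first giving $[f]$. The only cosmetic difference is the identification $[T_f+(1-S)]=[T_f]$ in $\KK_0(X,B)$, which you handle by splitting off a degenerate summand while the paper uses the operator homotopy $T_f+\varepsilon(1-S)$; both arguments are equivalent here.
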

\begin{proof}
    We go back to the previous index theorem. Define \(\widetilde{T}_f = T_f + (1-S) \in \Psi^0_H(X|B;E).\) The principal symbol of \(\widetilde{T}_f\) is given by:
    \begin{align*}
        \sigma_+(\widetilde{T}_f) &= s_0 f s_0 + (1-s_0) \\
        \sigma_-(\widetilde{T}_f) &= 1 \\
        \sigma_0(\widetilde{T}_f) &= 1
    \end{align*}
    Therefore \(\widetilde{T}_f\) is a Rockland family. The \(\KK\)-cycles defined by \(T_f\) and \(\widetilde{T}_f\) are operator homotopic (by \(T_f + \varepsilon(1-S)\)) therefore we get the same class in \(\KK_0(X,B)\) and compute it with the previous theorem.
\end{proof}

Given an element \([P] \in \KK_0(X,B)\), write \(\mathrm{ind}([P]) \in \K^0(B)\) for its push-forward via the map from \(X\) to a point. This is a virtual bundle over \(B\) that encodes the formal difference of vector bundles \(\ker(P)\) and \(\ker(P^*)\) (this makes sense after a perturbation of the operator so that they become vector bundles). The usual Chern characters of vector bundles extends as an isomorphism:
\[\mathrm{ch} \colon K^{0/1}(B)\otimes \mathbb{Q} \to H^{even/odd}(B;\mathbb{Q}).\]
Using the geometric description of the classes in \(\KK_0(X,B)\), we may describe the Chern character of the index bundle of \([P]\), using the family index formula due to Atiyah and Singer \cite{AtiyahSinger1968}.

\begin{thm}[\cite{AtiyahSinger1968}] Let \(X \to B\) be a fibration, \(E \to X\) a complex vector bundle. The Chern character of the index bundle of the twisted vertical Dirac operator is:
\[\mathrm{ch}(\mathrm{ind}([\mathrm{D}_{\mathcal{V}}^E])) = \int_{X/B} \mathrm{Td}(X/B)\mathrm{ch}(E).\]
In this formula, \(\mathrm{Td}(X/B)\) is the Todd class of the vertical bundle and 
\[\int_{X/B} \colon H^*(X;\mathbb{Q}) \to H^{*-N}(B;\mathbb{Q})\] 
is the integration along the fibers (of which \(N\) is the dimension).
\end{thm}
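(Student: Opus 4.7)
The plan is to deduce this formula as a direct specialization of the K-theoretic Atiyah-Singer family index theorem, combined with Grothendieck-Riemann-Roch for K-oriented maps. Writing $p\colon T^*_v X \to X$ for the vertical cotangent projection, the family index theorem (already invoked here as Theorem~\ref{Theorem:Atiyah Singer families}) identifies $\mathrm{ind}([\mathrm{D}_{\mathcal{V}}^E])$ with the topological pushforward $(\pi\circ p)_!([\sigma(\mathrm{D}_{\mathcal{V}}^E)]) \in \K^0(B)$.

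First I would identify the symbol class. Because the vertical bundle $\mathcal{V}$ carries a $\mathrm{Spin}^c$-structure, there is a K-theoretic Thom class $\tau_{\mathcal{V}}\in \K^0(T^*_v X)$ satisfying $p_!\tau_{\mathcal{V}} = 1$, and the symbol of a twisted Clifford-multiplication operator is exactly $[\sigma(\mathrm{D}_{\mathcal{V}}^E)] = \tau_{\mathcal{V}}\cdot p^*[E]$. Applying the projection formula for $p$ collapses the double pushforward and gives the intermediate identification $\mathrm{ind}([\mathrm{D}_{\mathcal{V}}^E]) = \pi_![E] \in \K^0(B)$.

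Next I would apply the Chern character and Grothendieck-Riemann-Roch. Since $\pi$ is K-oriented through the vertical $\mathrm{Spin}^c$-structure, the Chern character is compatible with the Gysin maps in the form
\[\mathrm{ch}(\pi_!\alpha) = \int_{X/B} \mathrm{Td}(X/B)\, \mathrm{ch}(\alpha),\]
valid for any $\alpha \in \K^0(X)$, where $\mathrm{Td}(X/B)$ is the Todd class of the vertical $\mathrm{Spin}^c$-bundle. Plugging $\alpha = [E]$ yields exactly the stated formula.

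No genuinely hard analytic work is involved, since the family index theorem of Atiyah-Singer is being invoked as a black box. The only bookkeeping care needed is in the convention for the Todd class of a $\mathrm{Spin}^c$-bundle: for such a bundle $V$ with determinant line bundle $L$, one sets $\mathrm{Td}(V) = e^{c_1(L)/2}\hat{A}(V)$, which agrees with the usual complex Todd class when $V$ carries an honest complex structure. In the contact fibration setting of the paper, where $\mathcal{V} = \mathcal{H}_{\mathcal{V}}\oplus\mathbb{R}R$ has a complex structure on $\mathcal{H}_{\mathcal{V}}$ compatible with $\mathrm{d}\Theta$, this simply reduces to $\mathrm{Td}(X/B) = \mathrm{Td}(\mathcal{H}_{\mathcal{V}})$ in the complex sense.
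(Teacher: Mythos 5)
Your derivation is correct: identifying the symbol of the twisted vertical Spin$^c$ Dirac operator with $\tau_{\mathcal{V}}\cdot p^*[E]$, collapsing the pushforward via $p_!\tau_{\mathcal{V}}=1$ and the projection formula to get $\mathrm{ind}([\mathrm{D}_{\mathcal{V}}^E])=\pi_![E]$, and then applying the Riemann--Roch compatibility $\mathrm{ch}(\pi_!\alpha)=\int_{X/B}\mathrm{Td}(X/B)\,\mathrm{ch}(\alpha)$ for the K-oriented submersion $\pi$ is the standard argument, with the Spin$^c$ Todd convention $e^{c_1(L)/2}\hat{A}$ handled correctly. The paper gives no proof of this statement --- it is quoted directly from Atiyah--Singer --- and your route is essentially the one in the cited literature, so there is nothing substantive to compare.
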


The following formulas are then deduced from this theorem using standard manipulations between the characteristic classes (see e.g. \cite{LawsonMichelson1989,BerlineGetzlerVergne1992}).

\begin{cor}
    Let \(X\to B\) be a contact fibration, \(E,F \to X\) complex vector bundles and \(P \colon \Psi^k(X|B;E,F)\) a family of Rockland operators. We have the formula in \(H^{*}(B;\mathbb{Q)}\):
    \begin{align*}
        \mathrm{ch}(\mathrm{ind}([P])) &= \int_{X/B}\mathrm{ch}([\sigma_0^{-1}\circ\sigma_+(P)])\mathrm{Td}(X/B) \\
        &+ (-1)^{n+1}\int_{X/B}\mathrm{ch}([\sigma_0^{-1}\circ\sigma_-(P)])\mathrm{Td}(X/B)
    \end{align*}
    where \(\mathrm{Td}(X/B)\) is the Todd class of the vertical bundle with the \(\mathrm{Spin}^c\)-structure corresponding to the orientation of the contact forms on the fibers.
\end{cor}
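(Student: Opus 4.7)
The strategy is to bootstrap the Chern-character calculation from the family index formula of the preceding theorem. That theorem expresses $[P]\in \KK_0(X,B)$ as a sum of cap products
\[[P] = [\sigma_0^{-1}\circ\sigma_+(P)]\cap [(X|B)^+] + [\sigma_0^{-1}\circ\sigma_-(P)]\cap [(X|B)^-]\]
(reading the second summand with the opposite $\mathrm{Spin}^c$-orientation, as is forced by the construction of $b$ in Subsection \ref{Subsection: Clutching}). Applying the pushforward $\mathrm{ind}\colon \KK_0(X,B) \to \K^0(B)$ and then the Chern character, everything reduces to showing that for a single class $[g] \in \K^1(X)$ one has
\[\mathrm{ch}\bigl(\mathrm{ind}([g]\cap [(X|B)^+])\bigr) = \int_{X/B}\mathrm{ch}(g)\,\mathrm{Td}(X/B),\]
where $\mathrm{ch}(g) \in H^{\mathrm{odd}}(X;\mathbb{Q})$ denotes the odd Chern character. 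The contribution of the second summand then follows from the same identity combined with the relation $[(X|B)^-] = (-1)^{n+1}[(X|B)^+]$ from Subsection \ref{Subsection: Fundamental classes}, producing the announced sign.

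To establish the single-class identity I would use the clutching/suspension isomorphism $\K^1(X)\cong \widetilde{\K}^0(X\times \mathbb{S}^1,\, X\times\mathrm{pt})$ to represent $[g]$ by a virtual bundle $E_g \to X\times \mathbb{S}^1$ which is trivialized over $X\times \mathrm{pt}$. Functoriality of the Kasparov product together with the identification of $[(X|B)^+]$ as the class of the vertical Dirac operator then identify $[g]\cap [(X|B)^+]$ with the class of the vertical twisted Dirac operator $\mathrm{D}_{\mathcal{V}'}^{E_g}$ for the fibration $X\times \mathbb{S}^1 \to B$, whose vertical bundle $\mathcal{V}' = \mathcal{V}\oplus T\mathbb{S}^1$ carries the $\mathrm{Spin}^c$-structure induced from $(X|B)^+$ and the standard framing on the circle (this is Remark \ref{Remark: KK class is vertical twisted Dirac} applied after the clutching step). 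The Atiyah--Singer family index theorem stated just above then gives
\[\mathrm{ch}\bigl(\mathrm{ind}([\mathrm{D}_{\mathcal{V}'}^{E_g}])\bigr) = \int_{X\times\mathbb{S}^1/B}\mathrm{ch}(E_g)\,\mathrm{Td}(\mathcal{V}').\]
Since $\mathrm{Td}(T\mathbb{S}^1) = 1$ the Todd class factors as $\mathrm{Td}(\mathcal{V}') = \mathrm{pr}_X^*\mathrm{Td}(X/B)$, and fiber integration along $\mathbb{S}^1$ returns $\mathrm{ch}(g)$ by the very definition of the odd Chern character through suspension. This yields the desired formula for a single $[g]$, and summing the two contributions gives the corollary.

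The delicate point will be the clutching identification: that the cap product by $[(X|B)^+]$ on a $\K^1$-class is geometrically computed by the vertical twisted Dirac operator on $X\times \mathbb{S}^1 \to B$. This is an instance of the compatibility of the Kasparov product with suspension, but one must track $\mathrm{Spin}^c$ and orientation conventions carefully so that the overall sign in the second summand comes out as $(-1)^{n+1}$ rather than $(-1)^n$, in agreement with the relation between $[(X|B)^+]$ and $[(X|B)^-]$ fixed in Subsection \ref{Subsection: Fundamental classes}.
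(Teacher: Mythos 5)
Your proposal is correct and follows essentially the same route as the paper: it deduces the corollary from the preceding family index theorem combined with the Atiyah--Singer families formula, spelling out the standard clutching/suspension manipulation over \(X\times\mathbb{S}^1\) (with \(\mathrm{Td}(T\mathbb{S}^1)=1\) and \(\mathbb{S}^1\)-fiber integration producing the odd Chern character) that the paper summarizes as "standard manipulations between characteristic classes." The only cosmetic difference is that you extract the sign \((-1)^{n+1}\) from the relation \([(X|B)^-]=(-1)^{n+1}[(X|B)^+]\) between the two vertical fundamental classes, whereas the paper phrases it via the Todd class of the opposite \(\mathrm{Spin}^c\)-structure; within the paper's framework these are equivalent bookkeepings.
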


The \((-1)^{n+1}\) comes from the fact that the Todd class of the vertical bundle with the opposite \(\mathrm{Spin}^c\)-structure is equal to the first one times this sign.

\begin{cor}
    Let \(X\to B, E\) and \(f\) be as previously. The Chern character of the index bundle of \(T_f\) is given by:
    \[\mathrm{ch}(\mathrm{ind}([T_f])) = \int_{X/B}\mathrm{ch}([f])\mathrm{Td}(X/B).\]
\end{cor}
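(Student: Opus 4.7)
The plan is to deduce this as a direct specialization of the preceding Chern character formula for Rockland families, applied to a stabilized Toeplitz operator that is Rockland in the full Heisenberg calculus. The key point is that the Toeplitz data $T_f$ already enjoys an explicit symbolic description that makes the two symbol classes trivial or identifiable with $[f]\in\K^1(X)$.

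First, I would pass from $T_f$ to the stabilized operator $\widetilde{T}_f = T_f + (1-S) \in \Psi^0_H(X|B;E)$, exactly as in the proof of the preceding theorem. A small perturbation argument (the operator homotopy $T_f + \varepsilon(1-S)$) shows $[T_f] = [\widetilde{T}_f]$ in $\KK_0(X,B)$, so the two have the same index bundle in $\K^0(B)$. The computation of the symbols carried out just above gives
\[\sigma_0^{-1}\circ\sigma_+(\widetilde{T}_f) = s_0 f s_0 + (1-s_0), \qquad \sigma_0^{-1}\circ\sigma_-(\widetilde{T}_f) = \mathrm{Id}, \qquad \sigma_0(\widetilde{T}_f) = \mathrm{Id}.\]

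Next I would identify these as classes in $\K^1(X)$. Because $s_0$ is the rank-one ground state projection onto $E\otimes\operatorname{Sym}^0(\mathcal{H}^{1,0})\cong E$, the automorphism $s_0 f s_0 + (1-s_0)$ of $E\otimes \mathcal{F}^+(\mathcal{H}^{1,0})$ acts as $f$ on that rank-one summand and as the identity on its orthogonal complement. Hence its stable class in $\K^1(X)$ equals $[f]$. On the other side, $\sigma_0^{-1}\circ\sigma_-(\widetilde{T}_f)$ is the identity automorphism, so its $\K^1(X)$-class vanishes and therefore so does $\mathrm{ch}([\sigma_0^{-1}\circ\sigma_-(\widetilde{T}_f)])$.

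Finally I would plug these two inputs into the preceding corollary's formula
\[\mathrm{ch}(\mathrm{ind}([\widetilde{T}_f])) = \int_{X/B}\mathrm{ch}([\sigma_0^{-1}\circ\sigma_+(\widetilde{T}_f)])\,\mathrm{Td}(X/B) + (-1)^{n+1}\int_{X/B}\mathrm{ch}([\sigma_0^{-1}\circ\sigma_-(\widetilde{T}_f)])\,\mathrm{Td}(X/B),\]
in which the second integral drops out and the first becomes $\int_{X/B}\mathrm{ch}([f])\,\mathrm{Td}(X/B)$. Combined with $\mathrm{ind}([T_f]) = \mathrm{ind}([\widetilde{T}_f])$, this yields the claimed formula. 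I do not expect a genuine obstacle here, as everything reduces to bookkeeping of the symbol calculation; the only subtle point worth verifying is that the identification $[\,s_0 f s_0 + (1-s_0)\,] = [f]$ in $\K^1(X)$ is intended in the stable sense described in the remark about $\mathcal{F}^+(\mathcal{H}^{1,0})$ being a direct limit of finite rank subbundles, so that $f$ truly represents this class after the rank-one compression.
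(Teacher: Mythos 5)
Your proposal is correct and follows essentially the route the paper intends: the stabilization \(\widetilde{T}_f = T_f + (1-S)\), the symbol computation \(\sigma_+ = s_0fs_0+(1-s_0)\), \(\sigma_-=\sigma_0=\mathrm{Id}\) (already carried out in the preceding Toeplitz index theorem), the stable identification \([s_0fs_0+(1-s_0)]=[f]\in \K^1(X)\), and substitution into the Rockland Chern character formula, where the \(\sigma_-\) term vanishes. This is exactly the ``standard manipulation'' the paper alludes to, so there is nothing to add.
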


\bibliographystyle{plain}
\bibliography{bibliography.bib}

\end{document}